\newcommand{\XX}{\mathbb{X}}
\def\Q{\mathbb{Q}}
\newcommand{\wXX}{\widehat{\mathbb{X}}}
\newcommand{\wA}{\widehat{\mathcal{A}}}
\newcommand{\wx}{\widehat{x}}
\newcommand{\wy}{\widehat{y}}
\newcommand{\wxy}{\widehat{x + y_s}}
\newcommand{\wW}{\widehat{W}}
\newcommand{\wB}{\widehat{B}}
\newcommand{\Y}{{\cal Y}}
\newcommand{\X}{{\cal M}}
\numberwithin{equation}{section}
    \newtheorem{theo}{Theorem}\numberwithin{theo}{section}
    \newtheorem{prop}[theo]{Proposition}
    \newtheorem{lemm}[theo]{Lemma}
        \def\N{\mathbb{N}}
    \def\HH{\bar{H}}
    \def\E{\mathbb{E}}
    \def\0{{\bf 0}}
    \def\R{\mathbb{R}}
    \def\PP{\mathbb{P}}
    \def\dist{ \operatorname{d}}
    \renewcommand{\E}{\mathbb E \,}
    \newcommand{\MM}{\mathbb{M}}
\newcommand{\M}{{\cal M}}
    \renewcommand{\P}{{{\cal P}}}
    \newcommand{\A}{{\cal A}}
    \newcommand{\Cov}{{\rm Cov}}
    \newcommand{\Var}{{\rm Var}}
    \newcommand{\Vol}{{\rm Vol}}
    \def\beqn{\begin{equation}}
    \def\eeqn{\end{equation}}
    \def\be{\begin{equation}}
    \def\ee{\end{equation}}
    \def\R{\mathbb{R}}
		\def\dint{\textup{d}}
    \def\A{{\cal A}}
    \def\la{{\lambda}}
		\def\d{\boldsymbol{d}}
    \def\qed{\hfill\hbox{${\vcenter{\vbox{
        \hrule height 0.4pt\hbox{\vrule width 0.4pt height 6pt
        \kern5pt\vrule width 0.4pt}\hrule height 0.4pt}}}$}}
\titleformat*{\section}{\normalfont\large\bfseries}
\titleformat*{\subsection}{\normalfont\bfseries}
\date{\vspace{-0.95cm}}
\begin{document}

\title{Rates of multivariate normal approximation for statistics in geometric probability}

\author{Matthias Schulte\footnotemark[1] \ and \ J. E. Yukich\footnotemark[2]}

\date{\today}
\maketitle

\footnotetext[1]{Hamburg University of Technology, Germany,
    matthias.schulte@tuhh.de}
\footnotetext[2]{Lehigh University,
    United States of America, jey0@lehigh.edu}

\begin{abstract}
We employ stabilization methods and second order Poincar\'e inequalities to establish rates of multivariate normal convergence for a large class of vectors $(H_s^{(1)},\hdots,$ $H_s^{(m)})$, $s \geq 1$, of statistics of marked Poisson processes on $\R^d, d \geq 2$, as the intensity parameter $s$ tends to infinity. Our results are applicable whenever the constituent functionals $H_s^{(i)}$, $i\in\{1,\hdots,m\}$, are expressible as sums of exponentially stabilizing score functions satisfying a moment condition. The rates are for the $\d_2$-, $\d_3$-, and $\d_{convex}$-distances. When we compare with a centered Gaussian random vector, whose covariance matrix is given by the asymptotic covariances, the rates are in general unimprovable and are governed by the rate of convergence of $s^{-1} \Cov( H_s^{(i)}, H_s^{(j)})$, $i,j\in\{1,\hdots,m\}$, to the limiting covariance, shown to be of order $s^{-1/d}$.  We use the general results to deduce rates of multivariate normal convergence for statistics arising in random graphs and topological data analysis as well as for multivariate statistics used to test equality of distributions. Some of our results hold for stabilizing functionals of Poisson input on suitable metric spaces.

\vskip6pt
\noindent {\bf Key words and phrases:} Multivariate normal approximation, stabilization, multivariate statistics in geometric probability, random Euclidean graphs, stochastic geometry

\vskip6pt
\noindent {\bf AMS 2010 Subject Classification:}  60D05, 60F05

\end{abstract}

\section{Introduction}\label{sec:Intro}

For all $s\geq 1$ and a fixed bounded $g:\R^d\to[0,\infty)$, $d\geq 2$, let $\P_{sg}$ be a Poisson process in $\R^d$ whose intensity measure has the density $sg$ with respect to Lebesgue measure. Given real-valued score functions $(\xi_s)_{s \geq 1}$ defined on the product of $\R^d$ and the space of simple locally finite point configurations on $\R^d$ and given a bounded set $A\subset \R^d$,
we consider statistics of the form
\be \label{Hstat}
H_s:= \sum_{x\in \P_{sg} \cap A} \xi_s(x, \P_{sg}), \quad s \geq 1,
\ee
where the value of the score $\xi_s(x,\P_{sg})$ depends only on the local configuration of points around $x$. In this case $H_s$ is said to be a stabilizing statistic.
As described in the survey \cite{SchreiberSurvey}, the concept of stabilization is especially useful in establishing laws of large numbers, variance asymptotics, and central limit theorems for $H_s$. The systematic investigation of stabilization goes back to \cite{PY1, PY4}.

The aim of this paper is to investigate the joint behavior of statistics $H_s^{(1)},\hdots,H_s^{(m)}$, $m\in\N$, $s\geq 1$, of the form \eqref{Hstat} with score functions $(\xi^{(1)}_s)_{s\geq 1},\hdots,(\xi^{(m)}_s)_{s\geq 1}$ and bounded sets $A_1,\hdots,A_m\subset\R^d$. Write $\HH^{(i)}_s:= H^{(i)}_s - \E H^{(i)}_s$ for $i\in\{1,\hdots,m\}$. Under suitable moment and localization conditions on $(\xi^{(1)}_s)_{s\geq 1},\hdots,(\xi^{(m)}_s)_{s\geq 1}$, it is known that
 $s^{-1/2}\HH^{(i)}_s, i \in \{1,\hdots,m\},$ converges to a centered normal as $s \to \infty$ (see e.g.\ \cite{BX1, BY05, LSY, Pe07, PY5, PY6, SchreiberSurvey}).
By the Cramer-Wold device one deduces that the $m$-vector $\widehat{H}_s:=s^{-1/2}(\HH^{(1)}_s,\hdots, \HH^{(m)}_s)$ converges to a centered multivariate normal as $s \to \infty$. The goal of this paper is to derive a quantitative version of this result with rates of convergence. To this end, we consider three distances $\d(\cdot,\cdot)$, namely the $\d_2$-, $\d_3$-, and $\d_{convex}$-distances described below,
which measure the closeness of the distributions of two random vectors. We establish upper bounds on $\d(\widehat{H}_s,N)$ in terms of $s$, where $N$ is a suitable $m$-dimensional centered Gaussian random vector. This provides rates of multivariate normal convergence for $\widehat{H}_s$ as $s\to\infty$.

Although much research has been conducted on the univariate normal approximation of stabilizing functionals as described above, \cite{PW} is the only paper providing explicit bounds for the multivariate normal approximation of $\widehat{H}_s$. Our results, which are presumably optimal, significantly improve the rates of convergence in \cite{PW} and consider a more general framework. We refer the reader to Remark (i) following Theorem \ref{mainthmcentered} for more details.

In this paper we consider two different situations. The first involves comparing  $\widehat{H}_s$ with an $m$-dimensional centered Gaussian random vector $N_{\Sigma(s)}$ having the same covariance matrix $\Sigma(s)$ as $\widehat{H}_s$. This can be seen as a multivariate counterpart to the univariate central limit theorems, where one standardizes and compares with a standard Gaussian random variable. For $\d(\widehat{H}_s,N_{\Sigma(s)})$ we derive upper bounds of the order $s^{-1/2}$ (see Theorem \ref{mainthmcentered}), which is of the same order as $1/\sqrt{\Var \HH_s^{(i)}}$, $i\in\{1,\hdots,m\}$. This result can be seen as a multivariate version of the univariate central limit theorems in \cite{LSY}, which establishes a rate of normal convergence of
$1/\sqrt{\Var \HH_s^{(1)}}$ in the Kolmogorov distance. This rate is presumably optimal by analogy to the classical central limit theorem for sums of i.i.d.\ random variables. Note that \cite{LSY} improved the weaker rates of convergence in e.g.\ \cite{BX1,  PY5,PY6}; see \cite{LSY} for more details and further references.

In the second situation, we compare $\widehat{H}_s$ with an $m$-dimensional centered Gaussian random vector $N_\Sigma$ with covariance matrix
$$
\Sigma := (\sigma_{ij})_{i,j=1,\hdots,m} := \lim_{s\to\infty} \bigg( \frac{\Cov(\HH_s^{(i)},\HH_s^{(j)})}{s} \bigg)_{i,j = 1,\hdots,m},
$$
i.e., $\Sigma$ is the asymptotic covariance matrix of $\widehat{H}_s$ for $s\to\infty$.
For $\d(\widehat{H}_s,N_\Sigma)$ we derive an upper bound of the order $s^{-1/d}$ (see Theorem \ref{mainthm}), which depends on the dimension of the underlying Euclidean space and which is weaker than in the first situation.  This effect occurs since one needs to compare the covariance matrices of $\widehat{H}_s$ and the Gaussian random vector, which are identical in the first case. One of the main achievements of this paper is to show that
\begin{equation}\label{eqn:DifferenceCovariances}
\bigg| \frac{\Cov(\HH_s^{(i)},\HH_s^{(j)})}{s} - \sigma_{ij} \bigg|\leq C s^{-1/d}, \quad s\geq 1, \quad i,j\in\{1,\hdots,m\},
\end{equation}
with some constant $C\in(0,\infty)$ (see Proposition \ref{covdiff}). For $i=j=1$, \eqref{eqn:DifferenceCovariances} provides a rate for the convergence of $s^{-1}\Var H^{(1)}_s$ to the limiting variance, which is also new. To control $\d(\widehat{H}_s,N_\Sigma)$, we have to bound in our proof the same terms as for $\d(\widehat{H}_s,N_{\Sigma(s)})$, which are of order $1/\sqrt{s}$, and we also have to bound the left-hand side of \eqref{eqn:DifferenceCovariances}. Thus, the rate of multivariate normal convergence in the second situation is governed by the distance between the exact and the asymptotic covariance matrix of $\widehat{H}_s$. For a particular example we can show that the bound in \eqref{eqn:DifferenceCovariances} is sharp up to a constant, whence the rate $s^{-1/d}$ for $\d(\widehat{H}_s,N_\Sigma)$ cannot be improved systematically (see Proposition \ref{optimality}).

Our rates of multivariate normal convergence are for distances $\d(\cdot,\cdot)$ defined as supremums over classes of test functions. More precisely, as presented in Section \ref{mainresults}, we obtain rates of multivariate normal convergence for $\widehat{H}_s$ with respect to the $\d_2$- and $\d_3$-distances, which are defined via smooth test functions. We in fact establish rates of convergence with respect to the distance $\d_{convex}$ defined at \eqref{eqn:convexDistance} in terms of the less tractable class comprised of indicators of measurable convex sets.
Rates of convergence with respect to the distance  $\d_{convex}$ coincide with the rates for the $\d_2$- and $\d_3$-distances; that is to say that the rates for non-smooth test functions are not worse than those for smooth test functions. This is noteworthy since, for example, in \cite{ReinertRoellin2009} and \cite{RR} (see also \cite[Section 12.4]{CGS}) one obtained at least additional logarithmic factors in case of non-smooth test functions.

Bounds for the multivariate normal approximation of general functionals are given in e.g.\ \cite{CGS, FangPhD, Fang, FangRoellin, GR, ReinertRoellin2009, RR}.  It is unclear whether these general results systematically apply to $\widehat{H}_s$ and, if they do, how to usefully evaluate the approximation bounds. Although $H^{(1)}_s,\hdots,H^{(m)}_s$ are Poisson functionals, the main results of \cite{PeccatiZheng} and Theorem 8.1 of \cite{HLS} for the multivariate normal approximation of Poisson functionals in the $\d_2$- and in the $\d_3$-distance are usually not directly applicable, since the bounds require knowledge of the entire Wiener-It\^o chaos expansions of the  Poisson functionals. We are thus unaware of a  general theory giving useful normal approximation bounds for the $m$-vector $s^{-1/2}(\HH^{(1)}_s,\hdots, \HH^{(m)}_s)$. As a first step to fill this lacuna, we were motivated to combine the Malliavin calculus on Poisson space with Stein's method to develop in \cite{SY2} second order Poincar\'e inequalities for the multivariate normal approximation of vectors of general Poisson functionals, which are multivariate counterparts to the main results of \cite{LPS}. These inequalities show that moment and probability bounds of first and second order difference operators control rates of multivariate normal approximation. Though these bounds appear unwieldy, we show here that they
simplify whenever the underlying statistics $H_s^{(1)},\hdots,H_s^{(m)}$ comprising $\widehat{H}_s$ are sums of scores $\xi_s^{(1)}(x, \P_{sg}),\hdots,\xi_s^{(m)}(x, \P_{sg})$, $x \in \P_{sg}$, satisfying only weak moment conditions and stabilization criteria. In parts, the bounds can be evaluated similarly as in \cite{LSY}, where the second order Poincar\'e inequalities for univariate normal approximation from \cite{LPS} were applied to stabilizing functionals, but the important difference with respect to the univariate situation is that we have to compare the covariance matrices of $\widehat{H}_s$ and $N_\Sigma$. This issue is addressed by the inequality \eqref{eqn:DifferenceCovariances}, whose proof involves careful estimates describing the average behavior of products of stabilizing score functions.

The recent preprint \cite{LPY} establishes bounds for the multivariate normal approximation of stabilizing Poisson functionals. These results, which also rely on methods from \cite{SY2}, provide systematically weaker rates of convergence than do those given here. In contrast to our situation, they are intended for functionals whose  second order difference operators cannot be controlled. In \cite{LPY}, one usually approximates a vector of Poisson functionals with a centered Gaussian random vector having the same covariance matrix, whence no quantitative bounds such as \eqref{eqn:DifferenceCovariances} for the convergence of covariances are considered.

Finding convergence rates for the multivariate normal approximation of stabilizing functionals of binomial input is a related but separate problem and is not addressed here. In the univariate case, the paper \cite{LSY} provides presumably optimal rates of normal convergence for stabilizing functionals of binomial input. In the multivariate case, we cannot similarly treat an underlying binomial point process,  since  the second order Poincar\'e inequalities for the multivariate normal approximation of Poisson functionals in \cite{SY2} have no available counterparts for binomial input.  A possible strategy to address this would be to extend the univariate results of \cite{LP} for binomial input, which were employed in \cite{LSY}, to the multivariate situation. Moreover, establishing a bound like \eqref{eqn:DifferenceCovariances} might be more difficult  for an underlying binomial point process.

This paper is organized as follows. Section \ref{mainresults} provides the framework, notation, and statements of our general multivariate normal approximation results. We discuss the optimality of our results and provide a criterion for the positive definiteness of the asymptotic covariance matrix.
In Section \ref{Applic}, we deduce rates of normal convergence of some multivariate functionals in stochastic geometry, including component, degree, and subgraph counts for random geometric graphs, statistical estimators of R\'enyi entropy vectors, and the vector of $k$-critical points for the Poisson-Boolean complex.  A marked version of our results gives rates of normal convergence for multivariate tests of equality of distributions.
In Section \ref{approximation} we draw on the findings from \cite{LSY, SY2} to deduce a general multivariate normal approximation result, Theorem \ref{thm:MainX}, for vectors of stabilizing functionals of marked Poisson processes in a metric space. Section \ref{Proofs} gives the proofs of all results in Section \ref{mainresults}. In particular, it is shown that our main results follow from Theorem \ref{thm:MainX}. We establish in Proposition \ref{covdiff} the crucial covariance convergence \eqref{eqn:DifferenceCovariances}, which is proven in Section \ref{covariance}.

\vskip.3cm

\section{Main results} \label{mainresults}

\subsection{Notation and definitions} \label{subsec:Notation}

Before describing our main results in detail we require some terminology.

\vskip.3cm

\noindent{\bf Marked Poisson processes.} Let $W \subseteq \R^d$, $d \geq 2$, be a fixed measurable set. Typically $W$ is either a compact subset of $\R^d$ or $\R^d$ itself. We sometimes assume that the boundary of $W$, denoted $\partial W$, satisfies
\begin{equation}\label{eqn:AssumptionW}
\limsup_{r\to 0} \frac{\lambda_d(\{x\in A: \dist(x, \partial W)\leq r\})}{r}<\infty
\end{equation}
for any measurable and bounded  $A\subseteq W$, where $\lambda_d$ stands for the $d$-dimensional Lebesgue measure and $\dist(x,\partial W):=\sup_{y\in\partial W}\|x-y\|$ with the Euclidean norm $\|\cdot\|$. We note that convex sets and polyconvex sets satisfy \eqref{eqn:AssumptionW},  a condition needed to control boundary effects.
  Let $g:W\to[0,\infty)$ be a measurable bounded function.
By $\Q$ we denote the measure on $\R^d$ whose density with respect to  $d$-dimensional Lebesgue measure $\lambda_d$ is $g$ on $W$ and zero on $W^c$.

To deal with marked Poisson processes, let $(\MM,\mathcal{F}_\MM,{\Q}_\MM)$ be a probability space. In the following $\MM$ shall be the space of marks and $\Q_\MM$ the underlying probability measure of the marks. Let $\widehat{\mathcal{F}}$ be the product $\sigma$-field of the Borel $\sigma$-field $\mathcal{B}(\R^d)$ and $\mathcal{F}_\MM$, and let $\widehat{\Q}$ be the product measure of $\Q$ and $\Q_{\MM}$, which is a measure on $\widehat{\R}^d:=\R^d\times\mathbb{M}$. For $\wx\in\widehat{\R}^d$ we often use the representation $\wx:=(x,m_x)$ with $x\in \R^d$ and $m_x\in\MM$.
Let $\mathbf{N}$ be the set of simple locally-finite counting measures on $(\widehat{\R}^d, \widehat{\mathcal{F}})$, i.e., for $\nu\in\mathbf{N}$ one has $\nu(\{\wx\})\leq 1$ for all $\wx\in\widehat{\R}^d$ and $\nu(A\times\MM)<\infty$ for all measurable and bounded $A \subset \R^d$. Simple counting measures correspond to point configurations and can be identified with their support. This means that $\mathbf{N}$ can be interpreted as the set of simple point configurations in $\widehat{\R}^d$. The set $\mathbf{N}$ is equipped with the smallest $\sigma$-field $\mathcal{N}$ such that the maps $m_A: \mathbf{N} \to \N\cup\{0,\infty\}, \nu\mapsto\nu(A)$ are measurable for all $A\in \widehat{\mathcal{F}}$.
 A point process is  a random element in $\mathbf{N}$. We update our notation and now let $\P_{sg}$ be the Poisson (point) process with intensity measure
 $s \widehat{\Q}$, which consists of points in $\wW:= W\times\MM$. Recall that the numbers of points of $\P_{sg}$ in disjoint sets $B_1,\hdots,B_n\in \widehat{\mathcal{F}}$, $n\in\N$, are independent and that the number of points of $\P_{sg}$ in a set $B\in \widehat{\mathcal{F}}$ follows a Poisson distribution with parameter $s \widehat{\Q}(B)$. When $(\MM,\mathcal{F}_{\MM},\Q_{\MM})$ is a singleton endowed with a Dirac point mass, $\wW$ and $\widehat{\Q}$ reduce to $W$ and $\Q$, respectively,  and the `hat' superscript can be removed in all occurrences.

In the following we denote by $\P_u$, $u> 0$, a marked stationary Poisson process in $\widehat{\R}^d$ whose intensity measure is $u$ times the product measure of the $d$-dimensional Lebesgue measure $\lambda_d$ and $\Q_\MM$.

\vskip.3cm

\noindent{\bf Random sums and random measures}. We consider families of scores $(\xi^{(1)}_s)_{s\geq 1}, \hdots$, $(\xi^{(m)}_s)_{s\geq 1}$, $m\in\N$, which are measurable maps from $\widehat{\R}^d \times \mathbf{N}$ to $\R$. We fix measurable and bounded sets $A_1,\hdots,A_m \subseteq W$ such that
$\lambda_d(A_i)>0, \ i\in\{1,\hdots,m\}$. We sometimes assume that
\begin{equation}\label{eqn:AssumptionAis}
\limsup_{r\to 0} \frac{\lambda_d(\{x\in \R^d: \dist(x, \partial A_i)\leq r\})}{r}<\infty.
\end{equation}
 For $i\in\{1,\hdots,m\}$, we put
\begin{equation}\label{eqn:Hs}
H^{(i)}_s := \sum_{\wx \in \P_{sg} \cap \widehat{A}_i} \xi^{(i)}_s(\wx, \P_{sg})
\end{equation}
with $\widehat{A}_i:=A_i\times \mathbb{M}$ and $\HH^{(i)}_s:= H^{(i)}_s - \E H^{(i)}_s$. We seek multivariate central limit theorems for the $m$-vector $s^{-1/2}(\HH^{(1)}_s,\hdots, \HH^{(m)}_s)$.
More generally, we consider the random measures
\be \label{randmeasure}
\mu^{(i)}_s:= \sum_{(x,m_x) \in \P_{sg} \cap \widehat{A}_i} \xi^{(i)}_s((x,m_x), \P_{sg}) \delta_x, \quad s \geq 1,
\ee
with $\delta_x$ being the point mass at $x\in\mathbb{R}^d$. For given measurable and bounded test functions $f_i: A_i\to\R$,  $i\in\{1,\hdots,m\}$, and $s \geq 1$ put
$$
\langle \mu^{(i)}_s, f_i \rangle := \sum_{(x,m_x)\in \P_{sg} \cap \widehat{A}_i} f_i(x) \xi^{(i)}_s((x,m_x), \P_{sg}) \quad \text{and} \quad \langle \bar{\mu}^{(i)}_s, f_i \rangle :=  \langle \mu^{(i)}_s, f_i \rangle - \E \langle \mu^{(i)}_s, f_i \rangle.
$$
We will assume  $f_i\not\equiv 0$, that is to say $\lambda_d(\{x \in A_i: f_i(x) \neq 0 \}) > 0$. The conditions $\lambda_d(A_i)>0$ and $f_i\not\equiv 0$ are required since, otherwise, $H_s^{(i)}=0$ a.s.\ and $\mu_s^{(i)}$ becomes the null measure a.s. When $f_i\equiv 1$ we note that $\langle \bar{\mu}^{(i)}_s, f_i \rangle =\bar{H}_s^{(i)}$.

\vskip.3cm

\noindent{\bf Score functions}. For several of our results we will require that $(\xi^{(1)}_s)_{s\geq 1},\hdots,(\xi^{(m)}_s)_{s\geq 1}$ are of a particular structure.
We say that  $(\xi_s^{(1)})_{s\geq 1},\hdots,(\xi_s^{(m)})_{s\geq 1}$ are {\em scaled scores} if there are measurable functions $\xi^{(i)}: \widehat{\R}^d \times \mathbf{N}\to\R$ and constants $C^{(i)}\in(0,\infty)$, $i\in\{1,\hdots,m\}$, such that $\xi_s^{(i)}((x, m_x),\M)$ is the score $\xi^{(i)}$ at $(x,m_x)$ evaluated on an $s^{1/d}$-dilation of $\mathcal{M}$ about $x$, namely
 \be \label{xis}
\xi_s^{(i)}((x, m_x),\M)=\xi^{(i)}((x, m_x), x+s^{1/d}(\M-x)), \quad (x, m_x) \in \widehat{\R}^d, \M\in\mathbf{N}, s\geq 1,
\ee
and
\be \label{translatebd}
|\xi^{(i)}((x,m_x), \mathcal{M}) - \xi^{(i)}((x+y,m_x), \mathcal{M}+y)| \leq C^{(i)} \|y\|, \quad x,x+y\in W, m_x\in\MM, \mathcal{M}\in\mathbf{N}.
\ee
For $\mathcal{M} \in\mathbf{N}$ and $a \in (0, \infty)$, by $a \mathcal{M}$ we mean the point set $\{(ax,m_x) : (x,m_x) \in \mathcal{M} \}$.  Given $y \in \R^d$, we use $\mathcal{M} + y$ to denote the point set $\{(x + y,m_x) : (x,m_x) \in \mathcal{M}\}$. Also, \eqref{translatebd} is satisfied if $\xi^{(i)}$ is translation invariant in the spatial coordinate, that is to say
$$
\xi^{(i)}((x,m_x), \mathcal{M}) = \xi^{(i)}((x+y,m_x), \mathcal{M}+y), \quad  x,y\in\R^d, m_x\in\mathbb{M}, \mathcal{M}\in\mathbf{N}.
$$

To derive central limit theorems for the measures at \eqref{randmeasure}, we impose several conditions on the scores $(\xi^{(1)}_s)_{s\geq 1},\hdots,(\xi^{(m)}_s)_{s\geq 1}$. The random variables we consider involve only $\xi_s^{(i)}(\wx,\M)$ for $\wx\in \wW$ and $\M\in\mathbf{N}$ such that $\wx\in\M$. Thus we  can assume without loss of generality throughout, that for all $\wx\in\wW$ and $\M\in\mathbf{N}$ with $\wx\notin\M$,
\be \label{xishort}
\xi_{s}^{(i)}(\wx,\M) = \xi_{s}^{(i)}(\wx,\M^{\wx}), \quad i\in\{1,\hdots,m\}, \quad s\geq 1.
\ee
Here and in the following, we use the abbreviation $\M^{\wx}:=\M\cup\{\wx\}$.
\vskip.3cm

\noindent{\bf Radius of stabilization.} For $x \in \R^d$ and $r \in (0, \infty)$, let $B^d(x,r)$ be the closed Euclidean ball centered at $x$ of radius $r$ and let $\wB^d(x, r)$ stand for $B^d(x,r) \times \MM$.
For $s\geq	1$ we say that  $R_s: \wW\times \mathbf{N}\to[0,\infty)$ is a radius of stabilization for the scores $\xi^{(1)}_s,\hdots,\xi^{(m)}_s$ if for all $i\in\{1,\hdots,m\}$, $x\in W$, $m_x\in\MM$, $\M\in\mathbf{N}$, $s\geq 1$, and $\wA \subset \R^d \times \MM$ with $|\wA| \leq 9$,
\be \label{rstab}
\xi_s^{(i)}((x,m_x), (\M\cup \wA) \cap {\wB}^d({x},R_s((x, m_x),\M))) = \xi_s^{(i)}((x, m_x), \M\cup  \wA ).
\ee
Here $|\wA|$ denotes the cardinality of $\wA$.
We call a radius of stabilization $R_s$ {\em monotone} if $R_s((x, m_x),\M_1) \geq R_s((x, m_x),\M_2)$ for all $(x, m_x)\in \wW$ and $\M_1,\M_2\in\mathbf{N}$ such that $\M_1\subseteq \M_2$, i.e., any point of $\M_1$ is also a point of $\M_2$. Moreover, for some of our findings we have to assume that for any $(x, m_x) \in \wW$, $\M\in\mathbf{N}$, and $r\geq 0$,
\begin{equation}\label{eqn:Rsinside}
{\bf 1}\{R_s((x,m_x), \M) \leq r\} = {\bf 1}\{R_s((x,m_x), \M \cap \wB^d(x,r))\leq r\}.
\end{equation}
Condition \eqref{eqn:Rsinside} says that the radius of stabilization $R_s$ is itself locally determined, that is to say $\xi$ is `intrinsically' stabilizing.

\vskip.3cm

\noindent{\bf Exponential stabilization.} For a given point $x \in \mathbb{R}^d$ we denote by $M_x$ the corresponding random mark, which has distribution $\Q_{\MM}$ and is independent of everything else. Similarly to e.g.\ \cite{BX1, BY05, LSY, Pe07, PW, PY4, PY5, PY6,SchreiberSurvey}, we say that $(\xi^{(1)}_s)_{s\geq 1},\hdots,(\xi^{(m)}_s)_{s\geq 1}$ are {\em exponentially stabilizing} if there exist radii of stabilization $(R_s)_{s\geq 1}$ and constants $C_{stab}, c_{stab}\in(0,\infty)$ such that for $r \geq 0$, $x\in W$, and $s \geq 1$,
\be \label{stab}
 \PP(R_s((x,M_x), \P_{sg}) \geq r) \leq C_{stab} \exp(-c_{stab}s r^{d} ).
 \ee
Sometimes we also require such a condition  with respect to some stationary Poisson processes, i.e., with $C_{stab},c_{stab}\in(0,\infty)$ as in \eqref{stab} and for $r \geq 0$, $x,y\in W$, and $s \geq 1$,
\be \label{stabStationary}
 \PP(R_s((x,M_x), \P_{sg(y)}) \geq r) \leq C_{stab} \exp(-c_{stab}s r^{d} ).
 \ee
The scores $(\xi^{(1)}_s)_{s\geq 1},\hdots,(\xi^{(m)}_s)_{s\geq 1}$ are {\em intrinsically exponentially stabilizing} if there exist radii of stabilization $(R_s)_{s\geq 1}$ that are monotone and satisfy \eqref{eqn:Rsinside} -  \eqref{stabStationary}.

\vskip.3cm

\noindent{\bf Moment conditions.}  For a finite set $\A\subset \R^d$ let $(\A,M_\A)$ be the random set obtained by equipping each point of $\A$ with a random mark distributed according to $\Q_\MM$ and independent from everything else.
We say that the scores $(\xi^{(1)}_s)_{s\geq 1},\hdots,(\xi^{(m)}_s)_{s\geq 1}$ satisfy a $(6+p)$th-moment condition with $p \in (0, \infty)$ if there exists a constant $C_{mom,p}\in (0,\infty)$ such that, for all $i\in\{1,\hdots,m\}$ and $\A \subset W$ with $|\A|\leq 9$,
\be \label{mom}
\sup_{s \in [1, \infty)}  \sup_{x \in W}  \E |\xi_s^{(i)} ((x,M_x), \P_{sg}\cup (\A,M_\A)) |^{ 6+p} \leq C_{mom,p}.
\ee
Sometimes it is necessary to also assume this moment condition holds for some stationary Poisson processes, i.e., that with the same $p$ and $C_{mom,p}$ as in \eqref{mom}, for all $i\in\{1,\hdots,m\}$ and $\A \subset \R^d$ with $|\A|\leq 9$,
\be \label{momStationary}
\sup_{s \in [1, \infty)}  \sup_{ x,y \in W }  \E |\xi_s^{(i)} ((x,M_x), \P_{sg(y)} \cup (\A,M_\A)) |^{6+p} \leq C_{mom,p}.
\ee

\vskip.3cm

\noindent{\bf Lipschitz functions.} 
For $U\subseteq\R^d$ and $L \in (0, \infty)$ we let  ${\rm{Lip}}_L(U)$
 be the class of Lipschitz functions on $U$ with Lipschitz constant $L$, i.e., those functions $f:U \to \R$ such that
\be \label{Lip}
|f(x)-f(y)|\leq L \|x-y\|, \quad x,y\in U.
\ee
We let ${\rm{Lip}}(U)$ denote all $f: U \to \R$ with $f \in {\rm{Lip}}_L(U)$ for some $L$.

\vskip.3cm
\noindent{\bf Covariance matrix.}
In order to provide a formula for asymptotic covariances, we need further conditions, which will sometimes be required for our results. Assume that $\lambda_d(\partial W)=0$ (which always holds if \eqref{eqn:AssumptionW} is satisfied) and that $g$ is almost everywhere continuous on $W$. Let $(\xi_s^{(1)})_{s\geq 1},\hdots,(\xi_s^{(m)})_{s\geq 1}$ be  scaled scores generated by $\xi^{(1)},\hdots,\xi^{(m)}$ (see \eqref{xis}) and assume that $(\xi^{(1)}_s)_{s\geq 1},\hdots,(\xi^{(m)}_s)_{s\geq 1}$ are intrinsically exponentially stabilizing and satisfy the moment conditions \eqref{mom} and \eqref{momStationary} for some $p > 0$. We fix measurable and bounded functions $f_1: A_1\to\R,\hdots,f_m: A_m\to\R$.
As in \cite{Pe07} (see also the remark after the proof of Proposition \ref{covdiff}), one may show that
\begin{equation}\label{eqn:Limit_sigma_ij}
\lim_{s \to \infty}  \frac{\Cov(\langle \bar{\mu}^{(i)}_s, f_i \rangle,\langle \bar{\mu}^{(j)}_s, f_j \rangle)}{s} = \sigma_{ij}, \quad i,j\in\{1,\hdots,m\},
\end{equation}
where for $i,j\in\{1,\hdots,m\}$,
\begin{equation}\label{eqn:sigmaij}
\begin{split}
\sigma_{ij} := & \int_{A_i\cap A_j} \E\xi^{(i)}((x,M_x), \P_{g(x)}) \xi^{(j)}((x,M_x), \P_{g(x)}) \, f_i(x) f_j(x) g(x) \, \dint x\\
& + \int_{A_i\cap A_j}\int_{\R^d} \big( \E \xi^{(i)}((x,M_x), \P_{g(x)}^{(x+y, M_{x + y})})\xi^{(j)}((x, M_{x+y}), \P_{g(x)}^{(x, M_x)}-y)\\
& \hskip 1.25cm - \E \xi^{(i)}((x,M_x), \P_{g(x)}) \E \xi^{(j)}((x,M_{x+y}), \P_{g(x)}- y) \big) \, f_i(x) f_j(x) g(x)^2 \, \dint y \, \dint x.
\end{split}
\end{equation}
Note that $\sigma_{ij}$ does not depend on the choice of $W$.
By $\Sigma:=(\sigma_{ij})_{i,j=1,\hdots,m}$ we denote the corresponding asymptotic covariance matrix.

\vskip.3cm

\noindent{\bf Distances between  $m$-dimensional random vectors}.
Since our limit theorems are quantitative in that they provide rates of normal convergence, we introduce distances between  two $m$-dimensional random vectors $Y=(Y_1,\hdots,Y_m)$ and $Z=(Z_1,\hdots,Z_m)$ or, more precisely, distances between their distributions. The $\d_2$-distance and the $\d_3$-distance are defined in terms of classes of continuously differentiable test functions. Let $\mathcal{H}_m^{(2)}$ be the set of all $C^2$-functions $h: \R^m\to\R$ such that
$$
|h(x)-h(y)| \leq \|x-y\|, \quad x,y\in\R^m, \quad \text{and} \quad \sup_{x\in\R^m} \|\operatorname{Hess}h(x)\|_{op}\leq 1,
$$
where $\operatorname{Hess} h$ is the Hessian of $h$ and $\|\Theta\|_{op}$
denotes the operator norm of a matrix $\Theta$. On the other hand, let $\mathcal{H}_m^{(3)}$ be the set of all $C^3$-functions $h: \mathbb{R}^m \to \mathbb{R}$ such that the absolute values of the second and third partial derivatives are bounded by one.
Define
\begin{equation}\label{eqn:d2distance}
\d_2(Y,Z) := \sup_{h\in\mathcal{H}_m^{(2)}} |\E h(Y) - \E h(Z)|
\end{equation}
if $\E \|Y\|,\E \|Z\|<\infty$ and
\begin{equation}\label{eqn:d3distance}
\d_3(Y,Z) := \sup_{ h\in\mathcal{H}_m^{(3)} } | \mathbb{E}h(Y)-\mathbb{E}h(Z) |
\end{equation}
if $\E \|Y\|^2,\E\|Z\|^2<\infty$.

We consider a distance involving non-smooth test functions, namely
\begin{equation}\label{eqn:convexDistance}
\d_{convex}(Y,Z):=\sup_{h\in {\cal I} } |\E h(Y) - \E h(Z)|,
\end{equation}
where $\cal I$ is the set of indicators of measurable convex sets in $\R^m$.
For $m \geq 2$ the $\d_{convex}$-distance is stronger than the Kolmogorov distance $\d_K$ given by
the supremum norm of the difference of the distribution functions of $Y$ and $Z$, namely
\begin{equation}\label{eqn:SupNorm}
\d_K(Y,Z):=\sup_{(x_1,\hdots,x_m)\in\R^m}|\PP(Y_1\leq x_1,\hdots,Y_m\leq x_m) - \PP(Z_1\leq x_1,\hdots,Z_m\leq x_m)|.
\end{equation}
Convergence in any of the distances at \eqref{eqn:d2distance}-\eqref{eqn:SupNorm} implies convergence in distribution.

\subsection{Statements of the main results} \label{subsec:MainResults}

In this subsection as well as in the two subsequent subsections let $\mathcal{P}_{sg}$, $W$, $A_1,\hdots,A_m$, $f_1,\hdots,f_m$, $\bar{\mu}^{(1)}_s,\hdots,\bar{\mu}_s^{(m)}$ and $\Sigma$ be as in Subsection \ref{subsec:Notation}. Recall that $\Sigma$ is the matrix with the components $\sigma_{ij}$, $i,j\in\{1,\hdots,m\}$, defined at \eqref{eqn:sigmaij}. Throughout we denote by $N_\Theta$ a centered Gaussian random vector with covariance matrix $\Theta\in\R^{m\times m}$. For $f: \R^d \supseteq \operatorname{dom} f\to \R$, let $||f||_{\infty} := \sup_{x \in \operatorname{dom} f} |f(x)|$.

\begin{theo} \label{mainthm}
Suppose that $g\in{\rm{Lip}}(W)$, that $W$ fulfills \eqref{eqn:AssumptionW}, and that $A_1,\hdots,A_m$ satisfy \eqref{eqn:AssumptionAis}. Assume that the scores $(\xi^{(1)}_s)_{s\geq 1},\hdots,(\xi^{(m)}_s)_{s\geq 1}$ are scaled, intrinsically exponentially stabilizing, and satisfy the moment conditions \eqref{mom} and \eqref{momStationary}
for some $p > 0$, and that $f_i \in {\rm{Lip}_1}(A_i)$ with $f_i\not\equiv 0$, $i\in\{1,\hdots,m\}$.
\begin{itemize}
\item [(a)] There exists a constant $c_1 \in(0,\infty)$ such that
\begin{equation}\label{eqn:maind3}
\d_3 \left( s^{-1/2} \left( \langle \bar{\mu}^{(1)}_s, f_1 \rangle  ,\hdots, \langle \bar{\mu}^{(m)}_s, f_m \rangle \right), N_\Sigma \right)  \leq c_1 s^{-1/d}, \quad s\geq 1.
\end{equation}
\item [(b)] If  $\Sigma$ is positive definite, then there exists a constant $c_2 \in(0,\infty)$ such that
\begin{equation}\label{eqn:maind2}
\d \left( s^{-1/2} \left( \langle \bar{\mu}^{(1)}_s, f_1 \rangle  ,\hdots, \langle \bar{\mu}^{(m)}_s, f_m \rangle \right), N_\Sigma \right)  \leq c_2 s^{-1/d}, \quad s\geq 1,
\end{equation}
for $\d \in \{\d_2, \d_{convex} \}.$
\end{itemize}
The constant $c_1$ depends on $d, W, g, m, A_1,\hdots,A_m, \|f_1\|_\infty,\hdots,\|f_m\|_\infty$ as well as all constants in \eqref{translatebd} and \eqref{stab}-\eqref{momStationary}. The constant $c_2$ depends on the same quantities together with $\Sigma$.
\end{theo}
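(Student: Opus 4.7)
The plan is to obtain both parts from the triangle inequality. Write $\Sigma(s)$ for the covariance matrix of the rescaled vector $\widehat{H}_s := s^{-1/2}(\langle \bar{\mu}^{(1)}_s, f_1\rangle,\hdots,\langle \bar{\mu}^{(m)}_s, f_m\rangle)$ and decompose
\[
\d(\widehat{H}_s, N_\Sigma) \leq \d(\widehat{H}_s, N_{\Sigma(s)}) + \d(N_{\Sigma(s)}, N_\Sigma).
\]
The first summand is controlled by the general multivariate normal approximation result of Theorem \ref{thm:MainX}, and the second by the covariance convergence estimate of Proposition \ref{covdiff}, whose proof is postponed to Section \ref{covariance}.

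For the first summand, I would check that the hypotheses of Theorem \ref{thm:MainX} are in force: the Lipschitz intensity $g$, the regularity \eqref{eqn:AssumptionW}--\eqref{eqn:AssumptionAis} of $W$ and of the $A_i$, the scaled intrinsically exponentially stabilizing scores satisfying \eqref{stab} and \eqref{stabStationary}, the $(6+p)$th moment conditions \eqref{mom} and \eqref{momStationary}, and the Lipschitz test functions $f_i \in {\rm{Lip}}_1(A_i)$ with $f_i \not\equiv 0$, are all present by assumption. Substituting these hypotheses into the second order Poincar\'e bounds of \cite{SY2}, applied componentwise in the spirit of the univariate argument in \cite{LSY}, should deliver
\[
\d(\widehat{H}_s, N_{\Sigma(s)}) \leq C_1\, s^{-1/2}, \quad s \geq 1,
\]
for the $\d_3$-distance with no further assumption, and, when $\Sigma$ is positive definite (so that $\Sigma(s)$ is positive definite for $s$ sufficiently large), also for $\d_2$ and $\d_{convex}$.

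The second summand is a purely Gaussian comparison. A smart-path interpolation along $\Sigma(t) := (1-t)\Sigma + t\Sigma(s)$ yields
\[
\d_3(N_{\Sigma(s)}, N_\Sigma) \leq \tfrac{1}{2} \sum_{i,j=1}^m |\Sigma(s)_{ij} - \sigma_{ij}|,
\]
which by Proposition \ref{covdiff} is at most $C_2\, s^{-1/d}$; since $s^{-1/2} \leq s^{-1/d}$ for $d \geq 2$, combining with the first step proves part (a). For part (b), under positive definiteness of $\Sigma$ (hence of $\Sigma(t)$ for all $t$ and all sufficiently large $s$), analogous interpolation arguments for $\d_2$ and a Gaussian smoothing argument for $\d_{convex}$ calibrated against the smallest eigenvalue of $\Sigma$ yield
\[
\d(N_{\Sigma(s)}, N_\Sigma) \leq C(\Sigma)\, \|\Sigma(s) - \Sigma\| \leq C_3\, s^{-1/d}, \quad \d \in \{\d_2, \d_{convex}\},
\]
and the triangle inequality completes the proof.

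The step I expect to be the main obstacle is the $\d_{convex}$ Gaussian-to-Gaussian bound without a logarithmic penalty, since the analogous reductions in \cite{RR, ReinertRoellin2009} typically pay such a price when passing from smooth test functions to indicators of convex sets. Achieving the clean $s^{-1/d}$ rate requires exploiting anti-concentration of a non-degenerate Gaussian on convex-set boundaries, which is precisely where the positive definiteness of $\Sigma$ is consumed and why $c_2$ depends on $\Sigma$ while $c_1$ does not.
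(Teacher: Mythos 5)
Your proposal is correct and arrives at the same rate, but the route differs structurally from the paper's. The paper does \emph{not} invoke the triangle inequality through $N_{\Sigma(s)}$; instead it applies Theorem \ref{thm:MainX} directly with $\Theta = \Sigma$, using the fact that the Stein-method bound in Theorem \ref{thm:generalStabilization} (from \cite{SY2}) already carries an explicit covariance-discrepancy term $\sum_{i,j}|\theta_{ij}-\Cov(F_i,F_j)|$. That discrepancy term is then controlled by Proposition \ref{covdiff}, while the remaining Stein terms are shown to be $O(s^{-1/2})$ via $I_{K,s}\leq \widetilde{C}_K s$. Your decomposition, by contrast, isolates the functional-to-Gaussian step (which is exactly Theorem \ref{mainthmcentered}) from an explicit Gaussian-to-Gaussian step. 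For $\d_3$ and $\d_2$ the interpolation estimate is standard, and for $\d_{convex}$ your worry about logarithmic losses is avoidable: since both measures are Gaussian one can pass through the total variation distance, for which $\d_{TV}(N(0,\Sigma), N(0,\Sigma(s))) \leq C(\Sigma)\|\Sigma(s)-\Sigma\|$ holds once $\Sigma$ is nondegenerate and $\Sigma(s)$ is close, with no logarithmic penalty; this is cleaner than a smoothing calibration. One small point to tidy up in your part (b): Theorem \ref{mainthmcentered}(b) needs $\Sigma(s)$ positive definite, which positive definiteness of $\Sigma$ only guarantees for $s$ large. For finitely many small $s$ you would bound the left side trivially (the distances are uniformly bounded for vectors with uniformly bounded second moments) and absorb this into $c_2$. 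The paper's integrated approach sidesteps this because it only ever needs $\Sigma$ itself to be positive definite. Both routes consume the same key lemma, Proposition \ref{covdiff}, so the essential content is identical; yours is more modular, the paper's is more parsimonious in its hypotheses on $\Sigma(s)$.
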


Note that \eqref{eqn:maind3} implies that, for all $i,j\in\{1,\hdots,m\}$,
\begin{equation}\label{eqn:ConvergenceCovariances}
\left|\sigma_{ij} - \frac{\Cov(\langle \bar{\mu}^{(i)}_s, f_i \rangle,\langle \bar{\mu}^{(j)}_s, f_j \rangle)}{s} \right| \leq 2 c_1 s^{-1/d}, \quad s\geq 1,
\end{equation}
because $\R^m\ni (u_1,\hdots,u_m)\mapsto u_i u_j/2$ belongs to the class $\mathcal{H}^{(3)}_m$ used in the definition of $\d_3$ at \eqref{eqn:d3distance}. The bound  \eqref{eqn:ConvergenceCovariances} is however a main ingredient in our proof of Theorem \ref{mainthm} and
it is established in Proposition \ref{covdiff}.

\vskip.3cm

We obtain improved rates of normal convergence when $\Sigma$ is replaced by
$\Sigma(s)$, $s\geq 1$, the covariance matrix of
$s^{-1/2} \left( \langle \bar{\mu}^{(1)}_s, f_1 \rangle  ,\hdots, \langle \bar{\mu}^{(m)}_s, f_m \rangle \right)$.
Moreover the following result requires neither that the scores $(\xi^{(1)}_s)_{s\geq 1},\hdots,(\xi^{(m)}_s)_{s\geq 1}$ are scaled as at \eqref{xis} and \eqref{translatebd} and fulfill \eqref{momStationary}, nor does it assume that their radii of stabilization are monotone and satisfy \eqref{eqn:Rsinside} and \eqref{stabStationary}. The assumptions on $W$, $g$, $A_1,\hdots,A_m$, and $f_1,\hdots,f_m$ are weaker as well.

\begin{theo}  \label{mainthmcentered}
Assume that $(\xi^{(1)}_s)_{s\geq 1},\hdots,(\xi^{(m)}_s)_{s\geq 1}$ are exponentially stabilizing  as at \eqref{stab} and satisfy the moment condition \eqref{mom} for some $p>0$. Let $f_i: A_i \to \R$ be measurable and bounded and such that $f_i\not\equiv 0$, $i\in\{1,\hdots,m\}$.
\begin{itemize}
\item [(a)] There exists a constant $c_3 \in(0,\infty)$ such that
\begin{equation}\label{eqn:d3ExactCovariance}
\d_3 \left( s^{-1/2} \left( \langle \bar{\mu}^{(1)}_s, f_1 \rangle  ,\hdots, \langle \bar{\mu}^{(m)}_s, f_m \rangle \right), N_{\Sigma(s)} \right)  \leq c_3 s^{-1/2}, \quad s\geq 1.
\end{equation}
\item [(b)] If $\Sigma(s)$ is positive definite for $s\geq 1$, there exists a constant $c_4 \in(0,\infty)$ such that
\begin{equation}\label{eqn:d2ExactCovariance}
\d \left( s^{-1/2} \left( \langle \bar{\mu}^{(1)}_s, f_1 \rangle  ,\hdots, \langle \bar{\mu}^{(m)}_s, f_m \rangle \right), N_{\Sigma(s)} \right)  \leq c_4 v(\Sigma(s)) s^{-1/2}, \quad s\geq 1,
\end{equation}
for $\d \in \{\d_2, \d_{convex} \}$, where $v: \R^{m \times m} \to \R$ is given by
\begin{equation}\label{eqn:Definition_nu}
v( \Theta):=\begin{cases} \max\{ ||\Theta^{-1}||_{op}  ||\Theta||_{op}^{1/2}, ||\Theta^{-1}||_{op}^{3/2}  ||\Theta||_{op} \}, & \quad \d = \d_2,\\
\max \{ ||\Theta^{-1}||_{op}^{1/2},  ||\Theta^{-1}||_{op}^{3/2}\}, & \quad \d = \d_{convex}.
\end{cases}
\end{equation}
\end{itemize}
The constants $c_3$ and $c_4$ depend on $d, W, g, m, A_1,\hdots,A_m, \|f_1\|_\infty,\hdots,\|f_m\|_\infty$ as well as the constants in \eqref{stab} and \eqref{mom}.
\end{theo}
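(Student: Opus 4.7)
The plan is to reduce Theorem~\ref{mainthmcentered} to the general multivariate normal approximation estimate for stabilizing Poisson functionals supplied by Theorem~\ref{thm:MainX} of Section~\ref{approximation}. That theorem, derived from the second order Poincar\'e inequalities of \cite{SY2}, bounds $\d_3(\widehat{H}_s, N_{\Sigma(s)})$ and $v(\Sigma(s))^{-1} \d(\widehat{H}_s, N_{\Sigma(s)})$ for $\d\in\{\d_2, \d_{convex}\}$ purely in terms of moments of the first and second order Malliavin difference operators $D_{\wx} H_s^{(i)}$ and $D_{\wx,\wy}^2 H_s^{(i)}$ of the component functionals; crucially, the comparison is already with a Gaussian of the \emph{same} covariance matrix $\Sigma(s)$, so no covariance-convergence estimate enters the argument.

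First I would estimate these difference operators using the stabilization property \eqref{rstab}, the tail bound \eqref{stab}, and the moment bound \eqref{mom}. Since $R_s$ stabilizes $\xi_s^{(i)}$, inserting a point $\wx$ alters only the score at $\wx$ (when $\wx\in\widehat{A}_i$) and the scores at those existing Poisson points $\wy$ whose stabilization ball contains $\wx$. Combining H\"older's inequality, the Mecke formula, and \eqref{mom} then yields a uniform bound on $\E|D_{\wx} H_s^{(i)}|^{4+p/2}$. The mixed second difference $D_{\wx,\wy}^2 H_s^{(i)}$ vanishes unless some Poisson point's stabilization region simultaneously contains both $\wx$ and $\wy$; by \eqref{stab} this event has probability at most $C\exp(-c s \|x-y\|^d/3^d)$, supplying the spatial decoupling needed in the subsequent integrations.

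Plugging these bounds into Theorem~\ref{thm:MainX} and performing the change of variables $y = x + s^{-1/d} u$, each spatial integral against the intensity $s g$ contributes $O(1)$; together with the $s^{-1/2}$ normalization built into $\widehat{H}_s$, this produces the rate in \eqref{eqn:d3ExactCovariance} and \eqref{eqn:d2ExactCovariance}. The factor $v(\Sigma(s))$ in the second bound tracks how the multivariate Stein solution depends on $\|\Sigma(s)^{-1}\|_{op}$ (and additionally $\|\Sigma(s)\|_{op}$ in the $\d_2$ case), which is precisely why positive definiteness of $\Sigma(s)$ is assumed in part (b). The principal obstacle is handling $\d_{convex}$ without picking up the logarithmic losses encountered in \cite{ReinertRoellin2009, RR}; this is managed inside the proof of Theorem~\ref{thm:MainX} through the refined smoothing techniques of \cite{SY2}, which exploit the polynomially bounded difference operators furnished by \eqref{stab} and \eqref{mom}. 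Notably, since no asymptotic covariance $\Sigma$ appears in the statement, the scaled-score structure \eqref{xis}--\eqref{translatebd} and the stationary conditions \eqref{stabStationary}, \eqref{momStationary} required in Theorem~\ref{mainthm} can be dropped, and the $s^{-1/d}$ term arising from \eqref{eqn:DifferenceCovariances} is sidestepped entirely, leaving only the $s^{-1/2}$ rate typical of Stein-Poincar\'e estimates.
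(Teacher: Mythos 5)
Your proposal is correct and matches the paper's approach: the paper's own proof of Theorem~\ref{mainthmcentered} simply observes that replacing $\Sigma$ by $\Sigma(s)$ causes the covariance-difference term in the display from Theorem~\ref{thm:MainX} to vanish, then repeats the calculation done for Theorem~\ref{mainthm} (namely $I_{K,s}\leq \widetilde C_K s$, giving $s^{-2\tau}\sqrt{I_{K,s}}$ and $s^{-3\tau}I_{K,s}$ of order $s^{-1/2}$ with $\tau=1/2$). You also correctly identify why the scaled-score structure \eqref{xis}--\eqref{translatebd} and the stationary conditions \eqref{stabStationary}, \eqref{momStationary} can be dropped, and where the $v(\Sigma(s))$ factor comes from.

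One small technical note: the paper's moment bounds on the difference operators are at the $(6+p/2)$th level, not $(4+p/2)$th, because the $\d_{convex}$ bound in Theorem~\ref{thm:generalStabilization}(c) requires $q>2$; this is why the stated moment condition \eqref{mom} is of order $6+p$ rather than $4+p$. For parts (a) and (b) alone, your lower moment level would indeed suffice, as the paper remarks after the proof of Theorem~\ref{thm:MainX}.
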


\noindent{\em Remarks.}
 (i) (Comparison of Theorems \ref{mainthm} and \ref{mainthmcentered} with the literature.) The paper \cite{PW} finds rates of normal convergence with respect to the distance at \eqref{eqn:SupNorm} of order $O(s^{-1/(2d + \varepsilon)})$, $\varepsilon > 0$, for the special case that $\xi^{(1)}= \hdots = \xi^{(m)}$ and that the $A_i$, $i\in\{1,\hdots,m\}$, are disjoint, which means that the limiting centered Gaussian random vector has a diagonal matrix as covariance matrix.
Theorem \ref{mainthm} upgrades these rates to $O(s^{-1/d})$ without assuming that the  $\xi^{(i)}$, $i\in\{1,\hdots,m\}$, coincide or that the $A_i$,  $i\in\{1,\hdots,m\}$, are disjoint.

The paper \cite{RR} and \cite[Theorem 12.5]{CGS} establish multivariate rates of normal convergence with respect to $\d_{convex}$ for sums of locally dependent bounded random variables, though the rates involve extraneous logarithmic factors.  The logarithmic factors were removed in \cite{FangRoellin} and also \cite{Fang}, where still boundedness conditions of one sort or another are assumed. For sums of locally dependent possibly unbounded random variables, multivariate normal convergence in the $\d_{convex}$-distance with presumably optimal rates is shown in  \cite[Chapter 3]{FangPhD}. For a further result without boundedness assumptions but with a weaker rate of convergence we refer to \cite[Corollary 3.1]{ReinertRoellin2009}. It is noteworthy that the scores $(\xi^{(1)})_{s\geq 1},\hdots,(\xi_s^{(m)})_{s\geq 1}$ in Theorem \ref{mainthm}(b) and Theorem \ref{mainthmcentered}(b) only require moment conditions and not boundedness assumptions. Stabilizing Poisson functionals do not have a local dependence structure in general, although they can be approximated by sums of locally dependent random variables (see \cite{BX1,PY5}). If one has good bounds for sums with a local dependence structure as in \cite{FangPhD, Fang,FangRoellin}, we believe that evaluating these bounds in a way similar to that in \cite{BX1,PY5} for the univariate case would lead to extra logarithmic factors. This difficulty appears inherent in the approaches given in \cite{BX1,PY5} and might occur for smooth and non-smooth test functions.

For smooth test functions the rate of convergence in \eqref{eqn:d3ExactCovariance} is of the same order as one obtains from the results of \cite[Chapter 12]{CGS} or \cite{GR} for sums of locally dependent random vectors, although stabilizing functionals are not of this form in general.
Moreover, the rate in \eqref{eqn:d3ExactCovariance} is for a slightly weaker and, thus, better distance ($\d_3$ instead of a distance defined by a class of test functions having bounded mixed partials up to order three).

 \vskip.1cm

\noindent (ii) (Classical central limit theorem.) For the special case $\xi_s^{(i)} \equiv 1$, $i\in\{1,\hdots,m\}$, and $f_1\equiv 1, \hdots, f_m\equiv 1$, $\left( \langle {\mu}^{(1)}_s, f_1 \rangle  ,\hdots, \langle {\mu}^{(m)}_s, f_m \rangle \right)$ becomes a vector of possibly dependent Poisson distributed random variables and one can apply the classical multivariate central limit theorem for sums of i.i.d.\ random vectors. However even for this situation, the problem of finding rates of multivariate normal convergence by Stein's method is a challenging one, as shown in \cite{BH,Go}.

\vskip.1cm

\noindent (iii) (Univariate setting $m = 1$.) We obtain new rate results in the univariate central limit theorem. Let $N(a,\sigma^2)$ denote a Gaussian random variable with mean $a\in\mathbb{R}$ and variance $\sigma^2\in(0,\infty)$.
Recall from \cite{LSY} that if $(\xi_s^{(1)})_{s\geq 1}$ are exponentially stabilizing and satisfy the moment condition \eqref{mom} for some $p>0$, then there is a constant $C\in(0,\infty)$ such that
$$
\d_K \left( \frac{\langle \bar{\mu}^{(1)}_s, f_1 \rangle}{\sqrt{\Var \langle \bar{\mu}^{(1)}_s, f_1 \rangle}} , N(0,1) \right) \leq C s^{-1/2}, \quad s\geq 1,
$$
provided $\Var \langle \bar{\mu}^{(1)}_s, f_1 \rangle \geq cs$, $s\geq 1$, with some constant $c\in(0,\infty)$. It is natural to ask for rates of normal convergence when $\Var \langle \bar{\mu}^{(1)}_s, f_1 \rangle $ is replaced by $\sqrt{s}$.
Theorem \ref{mainthm} yields the bound
$\d_K \left(  s^{-1/2} \langle \bar{\mu}^{(1)}_s, f_1 \rangle, N(0,\sigma_{11}) \right) \leq c_2 s^{-1/d}, \ s\geq 1,$ which is new, and  moreover this rate cannot be improved in general, as will be shown by Proposition \ref{optimality}.

\vskip.1cm

\noindent (iv) ($d=1$.) For simplicity we exclude the case $d=1$, i.e., Poisson processes on the real line. Nonetheless, our approach prevails in this situation, yielding the rate $s^{-1/2}$ in \eqref{eqn:maind3} and \eqref{eqn:maind2}.

\subsection{Optimality of rates} \label{optimalitysection}

If $\Sigma(s)$ converges to a positive definite matrix as $s\to\infty$, $v(\Sigma(s))$ is bounded for $s$ sufficiently large, whence the right-hand side of \eqref{eqn:d2ExactCovariance} is of order $s^{-1/2}$. Hence the rates of convergence in \eqref{eqn:d3ExactCovariance} and \eqref{eqn:d2ExactCovariance} are presumably optimal because one has the same rate as in the classical central limit theorem for sums of i.i.d.\ random variables. In the following we consider the situation of Theorem \ref{mainthm}. The rates there can be bounded from below in terms of the first expression on the right-hand sides of \eqref{lowerbound} and \eqref{lowerboundII} below, which compares the exact and the asymptotic covariances. The fact that such a term can slow down the rate of convergence has also been established  for statistics of nearest neighbor graphs in \cite[p.\ 343]{RR} and for some additive functionals of Boolean models in \cite[Remark 9.2]{HLS}.

\begin{prop} \label{prop:Optimality_General}
Let the conditions of Theorem \ref{mainthm} prevail. Then
\begin{equation} \label{lowerbound}
\begin{split}
& \d_3 \left( s^{-1/2} \left( \langle \bar{\mu}^{(1)}_s, f_1 \rangle  ,\hdots, \langle \bar{\mu}^{(m)}_s, f_m \rangle \right), N_\Sigma \right)\\
& \geq \frac{1}{2}\max_{i,j\in\{1,\hdots,m\}} \bigg|\sigma_{ij}-\frac{\Cov(\langle \bar{\mu}^{(i)}_s, f_i \rangle,\langle \bar{\mu}^{(j)}_s, f_j \rangle)}{s}\bigg|- c_3 s^{-1/2}, \quad s \geq 1,
\end{split}
\end{equation}
where $c_3$ is the  constant in \eqref{eqn:d3ExactCovariance}. If, additionally, $\Sigma$ is positive definite, there exist constants $c_5,\varepsilon \in(0,\infty)$ only depending on $\Sigma$ such that
\begin{equation}\label{lowerboundII}
\begin{split}
& \d  \left( s^{-1/2} \left( \langle \bar{\mu}^{(1)}_s, f_1 \rangle  ,\hdots, \langle \bar{\mu}^{(m)}_s, f_m \rangle \right), N_\Sigma \right)\\
& \geq c_5 \max_{i,j\in\{1,\hdots,m\}} \bigg|\sigma_{ij}-\frac{\Cov(\langle \bar{\mu}^{(i)}_s, f_i \rangle,\langle \bar{\mu}^{(j)}_s, f_j \rangle)}{s}\bigg|- c_4 v(\Sigma(s)) s^{-1/2}
\end{split}
\end{equation}
for $\d \in \{\d_2, \d_{convex}\}$ and $s\geq 1$ with
$$
\max_{i,j\in\{1,\hdots,m\}} \bigg|\sigma_{ij}-\frac{\Cov(\langle \bar{\mu}^{(i)}_s, f_i \rangle,\langle \bar{\mu}^{(j)}_s, f_j \rangle)}{s}\bigg| \leq \varepsilon.
$$
In \eqref{lowerboundII}, $c_4$ and $v(\cdot)$ are as in \eqref{eqn:d2ExactCovariance}.
\end{prop}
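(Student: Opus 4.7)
Both bounds proceed by inserting the auxiliary Gaussian $N_{\Sigma(s)}$, whose covariance coincides with that of $Y_s:=s^{-1/2}(\langle \bar{\mu}^{(1)}_s, f_1\rangle,\hdots,\langle \bar{\mu}^{(m)}_s, f_m\rangle)$, applying the triangle inequality
$$
\d(Y_s,N_\Sigma) \geq \d(N_{\Sigma(s)},N_\Sigma) - \d(Y_s,N_{\Sigma(s)}),
$$
and invoking Theorem \ref{mainthmcentered} to bound the subtracted term: by $c_3 s^{-1/2}$ for $\d=\d_3$ and by $c_4 v(\Sigma(s)) s^{-1/2}$ for $\d\in\{\d_2,\d_{convex}\}$. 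The residual task is to bound $\d(N_{\Sigma(s)},N_\Sigma)$ from below in terms of $\max_{i,j}|\Sigma(s)_{ij}-\sigma_{ij}|$, where $\Sigma(s)_{ij}=s^{-1}\Cov(\langle \bar{\mu}^{(i)}_s, f_i\rangle,\langle \bar{\mu}^{(j)}_s, f_j\rangle)$.

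\textbf{The $\d_3$ bound \eqref{lowerbound}.} The centered quadratic $h_{ij}(u):=u_iu_j/2$ has vanishing third partial derivatives and second partials bounded by $1$, hence $h_{ij}\in\mathcal{H}_m^{(3)}$. Since $N_{\Sigma(s)}$ and $N_\Sigma$ are centered,
$$
\d_3(N_{\Sigma(s)},N_\Sigma) \geq |\E h_{ij}(N_{\Sigma(s)})-\E h_{ij}(N_\Sigma)| = \tfrac{1}{2}|\Sigma(s)_{ij}-\sigma_{ij}|.
$$
Maximizing over $i,j$ and combining with the triangle inequality yields \eqref{lowerbound}.

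\textbf{The $\d_2$ and $\d_{convex}$ bound \eqref{lowerboundII}.} Under positive definiteness of $\Sigma$, the crux is a Gaussian-only lemma: there exist $c_5,\varepsilon\in(0,\infty)$ depending only on $\Sigma$ such that, for every positive semidefinite $\Theta$ with $\max_{i,j}|\Theta_{ij}-\sigma_{ij}|\leq\varepsilon$,
$$
\d(N_\Theta,N_\Sigma) \geq c_5\,\max_{i,j}|\Theta_{ij}-\sigma_{ij}|.
$$
I would probe the two Gaussians along one-dimensional projections $a^\top N_\Theta\sim N(0,a^\top\Theta a)$ with $a$ running over $\{e_i : 1\leq i\leq m\}\cup\{(e_i+e_j)/\sqrt{2} : 1\leq i<j\leq m\}$. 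For $\d_{convex}$ one uses $\d_{convex}\geq\d_K$ and the half-space $\{u:a^\top u\leq b\}$; since $\sigma^2\mapsto\Phi(b/\sigma)$ has non-vanishing derivative at $\sigma^2=a^\top\Sigma a$ for $b$ of order $\sqrt{a^\top\Sigma a}$, this yields a $\Sigma$-dependent linear lower bound on $|\Phi(b/\sqrt{a^\top\Theta a})-\Phi(b/\sqrt{a^\top\Sigma a})|$ in terms of $|a^\top(\Theta-\Sigma)a|$ for $\Theta$ close to $\Sigma$. For $\d_2$, take $h(u)=F(a^\top u)$ with $F(t)=\sqrt{1+t^2}-1$, which is $1$-Lipschitz with $\|F''\|_\infty\leq 1$, so $h\in\mathcal{H}_m^{(2)}$; Stein's identity gives $\frac{d}{d\alpha}\E F(\sqrt{\alpha}Z)=\tfrac{1}{2}\E F''(\sqrt{\alpha}Z)>0$ with $Z\sim N(0,1)$, uniformly bounded below on bounded intervals of $\alpha$, producing the analogous linear lower bound. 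Writing $((e_i+e_j)/\sqrt{2})^\top(\Theta-\Sigma)((e_i+e_j)/\sqrt{2})$ as $(\Theta_{ii}-\sigma_{ii})/2+(\Theta_{ij}-\sigma_{ij})+(\Theta_{jj}-\sigma_{jj})/2$ lets one recover each $|\Theta_{ij}-\sigma_{ij}|$ up to a $\Sigma$-dependent constant, and Step 1 then gives \eqref{lowerboundII}.

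\textbf{Main obstacle.} Step 1 and the $\d_3$ argument are essentially immediate. The substantive work lies in the Gaussian-only lemma for $\d_2$ and $\d_{convex}$: checking admissibility of the probing test functions, controlling the first-order Taylor expansion of the scalar maps $\alpha\mapsto\Phi(b/\sqrt{\alpha})$ and $\alpha\mapsto\E F(\sqrt{\alpha}Z)$ uniformly in a neighborhood $\{\alpha:|\alpha-a^\top\Sigma a|\leq\varepsilon'\}$, and extracting the absolute value of the derivative therein. Both the non-degeneracy of these derivatives (since $a^\top\Sigma a\geq\lambda_{\min}(\Sigma)>0$ for unit $a$) and the existence of the threshold $\varepsilon$ inside which the linearization is controlled rely crucially on the strict positive definiteness of $\Sigma$, which is precisely the hypothesis of \eqref{lowerboundII}.
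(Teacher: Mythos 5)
Your proposal is correct and follows essentially the same route as the paper: triangle inequality through $N_{\Sigma(s)}$, quadratic test functions for $\d_3$, and then for $\d_2,\d_{convex}$ a one-dimensional Gaussian lemma comparing the variances of linear projections. The only differences are implementation details: the paper probes along $e_i+e_j$ and $e_i-e_j$ and isolates $\sigma_{ij}$ via the identity $\Var(X+Y)-\Var(X-Y)=4\Cov(X,Y)$, while you probe along $e_i$, $e_j$, $(e_i+e_j)/\sqrt 2$ and unscramble the entries algebraically, and the paper constructs an abstract $C^2$ test function that agrees with $u^2$ near the origin, while you use the explicit $F(t)=\sqrt{1+t^2}-1$; both choices are admissible in $\mathcal H_m^{(2)}$ and lead to the same conclusion.
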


To show optimality of the bounds in Theorem \ref{mainthm} we consider vertex and edge counts in the random geometric graph $G(\widetilde{\P}_{s}, \varrho s^{-1/d})$, with a homogeneous Poisson process $\widetilde{\P}_s$ of intensity $s\geq 1$ on $[0,1]^d$ (i.e., $W=[0,1]^d$ and $g\equiv \mathbf{1}_{[0,1]^d}$) and $\varrho > 0$ fixed. For a point set $\X\subset\R^d$ and $r\in(0,\infty)$ the graph $G(\X, r)$ is obtained by connecting two distinct points $x$ and $y$ of $\X$ with an edge if and only if $||x - y|| \leq r$. By $V_s$ and $E_s$ we denote the numbers of vertices and edges of $G(\widetilde{\P}_s, \varrho s^{-1/d})$, which can be also written as sums of scores, whence they fit into our framework.

\begin{prop} \label{optimality}
Let $d \geq 3$ and let $G(\widetilde{\P}_{s}, \varrho s^{-1/d})$ be as above. There exist constants $c_6,c_7,s_0 \in (0, \infty)$ only depending on $d$ and $\varrho$ such that
$$
c_6 s^{-1/d} \leq \d(s^{-1/2}(V_s - \E V_s,E_s - \E E_s), N_\Sigma) \leq c_7 s^{-1/d}, \quad s \geq s_0,
$$
for $\d\in\{\d_2,\d_3,\d_{convex}$\}  with
\begin{equation}\label{eqn:sigma_RGG}
\Sigma := \begin{pmatrix} 1  & \kappa_d \varrho^d\\
\kappa_d \varrho^d & \kappa_d^2 \varrho^{2d}+\frac{\kappa_d}{2}\varrho^d \end{pmatrix},
\end{equation}
where $\kappa_d$ is the volume of the $d$-dimensional unit ball.
\end{prop}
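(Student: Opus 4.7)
\emph{The plan} is to derive the upper bound from Theorem \ref{mainthm} and the lower bound from Proposition \ref{prop:Optimality_General}. Place both statistics in the framework of Subsection \ref{subsec:Notation} by taking $W=A_1=A_2=[0,1]^d$, $g\equiv \mathbf{1}_{[0,1]^d}$, $f_1\equiv f_2\equiv 1$, unmarked input (i.e.\ $\MM$ a singleton), and scaled scores $\xi^{(1)}(x,\M)\equiv 1$ and $\xi^{(2)}(x,\M)=\tfrac{1}{2}|\{y\in\M\setminus\{x\}:\|x-y\|\le\varrho\}|$, so that $V_s=H_s^{(1)}$ and $E_s=H_s^{(2)}$. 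Both scores trivially satisfy \eqref{xis} and \eqref{translatebd}; the deterministic radius $R_s\equiv\varrho s^{-1/d}$ is monotone, intrinsically determined, and fulfils \eqref{stab}--\eqref{stabStationary}; and $\xi_s^{(2)}$ is dominated (up to an additive $9$) by a Poisson variable of mean $\kappa_d\varrho^d$, which secures \eqref{mom} and \eqref{momStationary} for every $p>0$. Inserting these scores into \eqref{eqn:sigmaij} returns $\sigma_{11}=1$, $\sigma_{12}=\kappa_d\varrho^d$, and $\sigma_{22}=\kappa_d^2\varrho^{2d}+\tfrac12\kappa_d\varrho^d$, matching \eqref{eqn:sigma_RGG}. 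Since $\det\Sigma=\tfrac12\kappa_d\varrho^d>0$, $\Sigma$ is positive definite, and Theorem \ref{mainthm} at once delivers the upper bound $c_7 s^{-1/d}$ simultaneously for $\d\in\{\d_2,\d_3,\d_{convex}\}$.

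\emph{Lower bound.} By Proposition \ref{prop:Optimality_General} it is enough to lower-bound $|\sigma_{ij}-s^{-1}\Cov(\HH_s^{(i)},\HH_s^{(j)})|$ by a constant multiple of $s^{-1/d}$ for some index pair. As $V_s\sim\operatorname{Poisson}(s)$, we have $s^{-1}\Var V_s=1=\sigma_{11}$, so the $(1,1)$-entry gives nothing and we focus on $(1,2)$. A single application of the Mecke formula gives $\E[V_s E_s]=(s+2)\E E_s$, hence $\Cov(V_s,E_s)=2\E E_s$. Writing $r_s:=\varrho s^{-1/d}$ and substituting $z=y-x$,
\[
\frac{\Cov(V_s,E_s)}{s} \;=\; s\int_{B^d(0,r_s)}\prod_{i=1}^d(1-|z_i|)\,\dint z \;=\; \kappa_d\varrho^d \;-\; C_d\,\varrho^{d+1}\,s^{-1/d} \;+\; O(s^{-2/d}),
\]
where $C_d := d\int_{B^d(0,1)}|u_1|\,\dint u>0$ by the expansion $\prod(1-|z_i|)=1-\sum|z_i|+O(|z|^2)$ and symmetry. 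Consequently $|\sigma_{12}-s^{-1}\Cov(V_s,E_s)| \geq \tfrac12 C_d\varrho^{d+1}s^{-1/d}$ for all $s$ sufficiently large. Inserting this into \eqref{lowerbound} and using $d\geq 3$, which makes $c_3 s^{-1/2}=o(s^{-1/d})$, produces the $\d_3$-lower bound $\geq c_6 s^{-1/d}$ for $s\geq s_0$. For $\d\in\{\d_2,\d_{convex}\}$, positive definiteness of $\Sigma$ together with the convergence $\Sigma(s)\to\Sigma$ keeps $v(\Sigma(s))$ bounded, so \eqref{lowerboundII} yields the analogous lower bound once $s$ is large enough for the deviation to fall below the tolerance $\varepsilon$ appearing there.

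\emph{The main obstacle} is the asymptotic expansion of $\Cov(V_s,E_s)/s$: one has to isolate the boundary contribution at order $s^{-1/d}$ and verify that its coefficient is \emph{strictly} positive. This reduces to the elementary fact $\int_{B^d(0,1)}|u_1|\,\dint u>0$, which is the precise mechanism through which Proposition \ref{prop:Optimality_General} bites and prevents the $s^{-1/d}$ rate in Theorem \ref{mainthm} from being improved.
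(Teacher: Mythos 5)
Your proposal is correct and follows essentially the same route as the paper: Mecke's formula reduces the $(1,2)$-covariance deficit to the boundary integral $s\int_{[0,1]^d\times([0,1]^d)^c}\mathbf{1}\{\|x-y\|\le\varrho s^{-1/d}\}$, which is of order $s^{-1/d}$, and then Theorem \ref{mainthm} gives the upper bound while Proposition \ref{prop:Optimality_General} gives the lower bound. The only difference is cosmetic — you extract the $s^{-1/d}$ coefficient explicitly by Taylor-expanding $\prod_i(1-|z_i|)$, whereas the paper simply bounds the boundary integral from below.
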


Proposition \ref{optimality} implies that the rates of convergence in Theorem \ref{mainthm} cannot be improved systematically. The idea of the proof of Proposition \ref{optimality} is to show that the first expression on the right-hand sides of \eqref{lowerbound} and \eqref{lowerboundII} is of order $s^{-1/d}$ and to apply Proposition \ref{prop:Optimality_General}.

\subsection{Positive definiteness of the asymptotic covariance matrix}

To apply \eqref{eqn:maind2} one has to check that the asymptotic covariance matrix $\Sigma$ is positive definite. Note that the positive definiteness of $\Sigma$ is equivalent to
$$
\lim_{s\to\infty} \frac{1}{s}\Var \sum_{i=1}^m a_i \langle \bar{\mu}^{(i)}_s, f_i\rangle >0
$$
for all $a=(a_1,\hdots,a_m)\in\R^m$ with $a\neq 0$. Positive definiteness and non-degeneracy of the asymptotic variances are separate problems from that of normal approximation, which depend on the particular choice of the score functions and which we will not address in detail here. Nevertheless we provide the following criterion.

\begin{prop}\label{prop:PositiveDefiniteness}
Suppose that $\lambda_d(\partial W)=0$, that $g$ is almost everywhere continuous on $W$ and $g(x)>0$ for all $x\in W$, and that $(\xi^{(1)}_s)_{s\geq 1},\hdots,(\xi^{(m)}_s)_{s\geq 1}$ are scaled, intrinsically exponentially stabilizing, and translation invariant, and satisfy the moment conditions \eqref{mom} and \eqref{momStationary} for some $p > 0$.
Assume that there is a measurable and bounded set $A\subset \R^d$ such that for $\widehat{A}:=A\times\mathbb{M}$ and for any $u>0$ the asymptotic covariance matrix of
$$
\frac{1}{\sqrt{s}} \bigg(\sum_{\wx \in \P_{su}\cap \widehat{A}} \xi_s^{(1)}(\wx,\P_{su}),\hdots,\sum_{\wx \in \P_{su}\cap \widehat{A}} \xi_s^{(m)}(\wx,\P_{su})\bigg)
$$
as $s\to\infty$ is positive definite. Let $f_i: A_i\to\mathbb{R}$, $i\in\{1,\hdots,m\}$, be measurable and bounded and such that $f_i\not\equiv 0$. Then $\Sigma:= (\sigma_{ij})_{i, j = 1,\hdots,m}$ as defined in \eqref{eqn:sigmaij} is positive definite.
\end{prop}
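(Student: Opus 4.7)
Fix $a=(a_1,\dots,a_m)\in\R^m\setminus\{0\}$; the aim is to show $a^{T}\Sigma a>0$. Let $\mathcal{A}:=\bigcup_{i=1}^{m}A_i$. The idea is to rewrite $a^{T}\Sigma a$ as the integral over $\mathcal{A}$ of a nonnegative quadratic form whose matrix is, pointwise in $x$, the positive-definite matrix supplied by the hypothesis on the stationary Poisson input.

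The first step is to exploit translation invariance of $\xi^{(1)},\dots,\xi^{(m)}$ together with the stationarity of $\P_u$ to eliminate all $x$-dependence, other than through $g(x)$, inside the integrand of \eqref{eqn:sigmaij}. Shifting every spatial coordinate by $-x$ converts
$$\E\,\xi^{(i)}((x,M_x),\P_{g(x)}^{(x+y,M_{x+y})})\,\xi^{(j)}((x,M_{x+y}),\P_{g(x)}^{(x,M_x)}-y)$$
into a quantity depending on $x$ only through $g(x)$ (and on $y$); the diagonal and product terms simplify analogously. Extending each $f_i$ by $0$ outside $A_i$, this produces a function $\psi_{ij}:(0,\infty)\to\R$ for which
\begin{equation*}
\sigma_{ij}=\int_{\mathcal{A}}f_i(x)f_j(x)\,\psi_{ij}(g(x))\,dx,\qquad i,j\in\{1,\dots,m\}.
\end{equation*}
Setting $v(x):=(a_1f_1(x),\dots,a_mf_m(x))^{T}$ and $\Psi(u):=(\psi_{ij}(u))_{i,j=1,\dots,m}$, this yields
\begin{equation*}
a^{T}\Sigma a=\int_{\mathcal{A}} v(x)^{T}\Psi(g(x))\,v(x)\,dx.
\end{equation*}

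The second step is to identify $\Psi(u)$, up to a positive scalar, with the asymptotic covariance matrix that the hypothesis assumes to be positive definite. Applying \eqref{eqn:sigmaij} in the stationary setting $W=\R^d$, $g\equiv u$, $f_i\equiv\mathbf{1}_{A}$, with $A$ the bounded set in the hypothesis, and performing the same shift, one finds that the $(i,j)$-entry of the limit in \eqref{eqn:Limit_sigma_ij} equals $\lambda_d(A)\,\psi_{ij}(u)$. The hypothesis therefore forces $\Psi(u)$ to be positive definite for every $u>0$. Since $g(x)>0$ on $W$, $\Psi(g(x))$ is positive definite for every $x\in\mathcal{A}$, so the integrand above is nonnegative throughout $\mathcal{A}$.

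To conclude I would pick an index $i_0$ with $a_{i_0}\neq 0$. Since $f_{i_0}\not\equiv 0$, the set $\{x\in A_{i_0}:f_{i_0}(x)\neq 0\}$ has positive Lebesgue measure, and on it $v_{i_0}(x)\neq 0$, so $v(x)\neq 0$ and the integrand is strictly positive, yielding $a^{T}\Sigma a>0$. The main obstacle is the translation-invariance reduction in the first step: one has to shift the marked base points and the Poisson process together, carefully tracking the marks $M_x$ and $M_{x+y}$, to extract a single function $\psi_{ij}$ from the intricate integrand of \eqref{eqn:sigmaij} and match it cleanly with the stationary asymptotic covariance in the hypothesis. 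Once that reduction is in hand, what remains is essentially a linear-algebra argument using that $f_{i_0}\not\equiv 0$ prevents $v$ from vanishing almost everywhere.
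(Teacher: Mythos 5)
Your proposal is correct and follows essentially the same route as the paper's proof: translation invariance is used to reduce the integrand of \eqref{eqn:sigmaij} to a function of $g(x)$ only, yielding $a^{T}\Sigma a = \int v(x)^{T}\Psi(g(x))\,v(x)\,dx$, and $\Psi(u)$ is identified (up to the factor $\lambda_d(A)$) with the stationary asymptotic covariance matrix from the hypothesis; the paper writes $\Psi(u)$ as $\Sigma^{(1)}(u)u+\Sigma^{(2)}(u)u^{2}$ rather than as a single symbol, but the argument is the same. Your final paragraph spelling out why $v\not\equiv 0$ on a set of positive measure (via $f_{i_0}\not\equiv 0$) is a useful explicit version of a step the paper leaves implicit.
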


\noindent{\em Remarks.} (i) Proposition \ref{prop:PositiveDefiniteness} implies that for translation invariant scores it is sufficient to establish the positive definiteness of the asymptotic covariance matrices for a family of stationary Poisson processes in order to show positive definiteness of the covariance matrix $\Sigma$ for inhomogeneous Poisson processes and test functions $f_1,\hdots,f_m$.

\vskip.1cm
\noindent (ii)  In certain situations it is straightforward to verify that $\Sigma$ is positive definite.  For example, if $A_i$ and $A_j$ are disjoint for all distinct $i, j \in \{1,\hdots,m\}$, then $\Sigma$ is a diagonal matrix whose entries are $ \lim_{s \to \infty} s^{-1} \Var \langle \bar{\mu}^{(i)}_s, f_i \rangle$, $i\in\{1,\hdots,m\}$. Such asymptotic variances are automatically strictly positive for many functionals of interest, as shown in Theorem 2.1 of \cite{PY1}. This result says that the limiting variances are strictly positive whenever the `add-one cost' for $\langle \bar{\mu}^{(i)}_s, f_i \rangle$, $i\in\{1,\hdots,m\}$, satisfies a localization condition; see also Section 4 of \cite{PW}.

\vskip.3cm

\section{Applications} \label{Applic}

We use our general results to deduce rates of multivariate normal convergence for vectors of statistics arising in stochastic geometry and topological data analysis. Our list of applications is not exhaustive.

If not stated explicitly, we deal with an unmarked underlying Poisson process. Throughout we assume that $W$ is compact and convex and that $g: W \to [0,\infty)$ is
bounded away from zero and infinity. When we say that measures $(\mu_s^{(1)})_{s\geq 1},\hdots, (\mu_s^{(m)})_{s\geq 1}$ satisfy the conclusions of Theorems \ref{mainthm} or \ref{mainthmcentered}, we implicitly understand that $g$, $A_1,\hdots,A_m$, and $f_1,\hdots,f_m$ meet the conditions required by the theorem; e.g., in the setting of Theorem \ref{mainthm} we mean that $g: W \to [0,\infty)$ is in $\operatorname{Lip}(W)$. For the conclusions of parts (b) of Theorems \ref{mainthm} and \ref{mainthmcentered}  it is crucial that $\Sigma$ and $\Sigma(s)$, $s\geq 1$, respectively, are positive definite, which we implicitly assume in this section whenever necessary. In case of the examples in this section, one can often check that $\Sigma(s)$ is positive definite. The idea is to verify on a case-by-case basis that for each vector $(u_1,\hdots,u_m)\in\mathbb{R}^m$ with $(u_1,\hdots,u_m)\neq(0,\hdots,0)$ the event
$$
\sum_{i=1}^m u_i \langle \bar{\mu}^{(i)}_s, f_i \rangle \neq 0
$$
has positive probability. Together with the observation that for $\mathcal{P}_{sg}=\emptyset$ the linear combination equals zero, this proves positive definiteness of $\Sigma(s)$. Some of the applications below include remarks describing special cases where the positive definiteness of $\Sigma$ may be verified. Many of the applications described here are also valid for Poisson input on some manifolds and other metric spaces, which will be discussed in Remark (iii) following Theorem \ref{thm:MainX}.

\subsection{Multivariate statistics of $k$-nearest neighbors graphs}

Given $\X\in \mathbf{N}$, $k \in \N$, and $x \in W$, let $V_k(x, \X)$ be the set of $k$-nearest neighbors of $x$, i.e., the $k$ closest points of $x$ in $\X\setminus \{x\}$. In case these $k$ points are not unique, we  break the tie via some  fixed linear order on $W$.
The (undirected) $k$-nearest neighbors graph $NG_k(\X)$ is the graph with vertex set $\X$ obtained by including an edge $\{x,y\}$  if $y\in V_k(x,\X)$ and/or $x\in V_k(y,\X)$. We consider four multivariate statistics of $NG_k(\X)$, the first two of which have received considerable attention in the univariate set-up; see \cite{LSY} and references therein.
\vskip.3cm

\noindent{\bf a. Total edge length vector}. For all $q \in [0,\infty)$ and $k \in \N$ define
$$
\xi^{(k,q)}(x, \X) := \sum_{y \in V_k(x, \X)} \rho^{(k,q)}(x, y,\X),
$$
where $\rho^{(k,q)}(x,y,\X) := ||x-y||^q/2$ if $x$ and $y$ are mutual $k$-nearest neighbors, i.e., $x\in V_k(y,\X\cup\{x\})$ and $y\in V_k(x,\X\cup\{x\})$, and otherwise $\rho^{(k,q)}(x,y,\X) := ||x-y||^q$.
The total weight  of the undirected $k$-nearest neighbors graph on $\X$ with $q$th power-weighted edges is  $\sum_{x \in \X} \xi^{(k,q)}(x, \X)$.  We study the re-scaled version
$\sum_{x \in \X} \xi^{(k,q)}_s(x, \X)$, where $\xi^{(k,q)}_s$  is defined in terms of $\xi^{(k,q)}$ as
at \eqref{xis}. More generally, given $k_i \in \N$ and $q_i \in [0, \infty)$, $i \in \{1,\hdots,m\}$, we consider the measures
$$
\mu_{s}^{(i,k_i,q_i)}:= \sum_{x \in \P_{sg} \cap A_i} \xi^{(k_i,q_i)}_s(x, \P_{sg})\delta_x.
$$

\begin{theo} \label{knnthm}
The measures $(\mu_{s}^{(1,k_1,q_1)})_{s \geq 1},\hdots,(\mu_{s}^{(m,k_m,q_m)})_{s \geq 1}$ satisfy the conclusions of Theorems \ref{mainthm} and \ref{mainthmcentered}.
\end{theo}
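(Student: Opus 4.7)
The plan is to verify, for each $i\in\{1,\hdots,m\}$, the full list of hypotheses of Theorem \ref{mainthm} for the score $\xi^{(k_i,q_i)}$; since the hypotheses of Theorem \ref{mainthmcentered} are strictly weaker, the same verification yields both conclusions simultaneously. Because stabilization, translation, and moment conditions are imposed on each score individually, it suffices to treat a generic $\xi^{(k,q)}$ with $k\in\N$ and $q\in[0,\infty)$. The boundedness and regularity hypotheses on $W$, $g$, $A_i$, and $f_i$ are guaranteed by the standing assumptions of Section \ref{Applic}.

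First, since $\xi^{(k,q)}(x,\X)$ depends only on the relative positions $\X-x$ (the identification of $k$-nearest neighbors, the fixed linear tie-breaking rule, and the pairwise distances $\|x-y\|$ are all spatially translation invariant), the map $\xi^{(k,q)}$ is translation invariant. Consequently $\xi^{(k,q)}_s$ is scaled in the sense of \eqref{xis} and \eqref{translatebd} holds trivially. For intrinsic exponential stabilization I would use the standard cone construction (cf.\ \cite{PY1, PY4, LSY}): partition $\R^d$ into $J=J(d)$ disjoint cones $C_1,\hdots,C_J$ with apex at the origin and angular diameter less than $\pi/6$, and define $R_s((x,m_x),\M)$ as the smallest $r\geq 0$ such that each cone $x+C_j$ either contains at least $k+10$ points of $\M$ inside $B^d(x,r)$ or satisfies $\lambda_d((x+C_j)\cap B^d(x,r)\cap W)=0$. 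Any such $r$ forces the $k$-nearest neighbors of $x$ in $\M\cup\wA$ (for $|\wA|\leq 9$) to lie inside $B^d(x,r)$, yielding \eqref{rstab}; monotonicity and \eqref{eqn:Rsinside} are immediate from this definition. For the exponential tail \eqref{stab}, the event $\{R_s((x,M_x),\P_{sg})>r\}$ forces some cone with $\lambda_d((x+C_j)\cap B^d(x,r)\cap W)\geq c\, r^d$ to contain fewer than $k+10$ points of $\P_{sg}$; the lower bound $c\, r^d$ holds uniformly over $x\in W$ by convexity of $W$, and $g$ is bounded below by a positive constant, so a Chernoff tail bound for the Poisson count delivers the required $C_{stab}\exp(-c_{stab}s r^d)$. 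The bound \eqref{stabStationary} is established identically.

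For the moment conditions \eqref{mom} and \eqref{momStationary} I would use the pointwise estimate $|\xi^{(k,q)}(x,\X)|\leq k\, D_k(x,\X)^q$, where $D_k(x,\X)$ denotes the distance from $x$ to its $k$-th nearest neighbor in $\X\setminus\{x\}$, reducing matters to bounding moments of $D_k(x,\cdot)^q$. For Poisson input of intensity bounded below the same cone argument shows that $D_k$ has stretched-exponential tails uniform in $s$, in $x\in W$, and in the insertion of at most nine deterministic points, so every polynomial moment of $\xi^{(k,q)}_s$ is finite and uniformly bounded. The main technical obstacle is arranging the cone argument uniformly over $x\in W$, including for $x$ close to $\partial W$ where many cones $x+C_j$ lie outside $W$; this is handled by the modification above that excludes cones with null intersection with $W$ from the count, using convexity of $W$ to guarantee that the surviving cones still cover the directions in which points of $\M$ can occur and jointly capture a $\lambda_d$-volume of order $r^d$. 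Once these verifications are in place, Theorem \ref{mainthm} and Theorem \ref{mainthmcentered} apply directly to $(\mu_{s}^{(1,k_1,q_1)})_{s\geq 1},\hdots,(\mu_{s}^{(m,k_m,q_m)})_{s\geq 1}$.
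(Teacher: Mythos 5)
Your overall plan is the same as the paper's: verify scaling, translation invariance, intrinsic exponential stabilization, and the moment conditions, then invoke Theorems \ref{mainthm} and \ref{mainthmcentered}. The paper dispenses with the verification of stabilization and moments by citing the proof of Theorem 3.1 of \cite{LSY} and Subsection 6.3 of \cite{Pe07}, whereas you reconstruct the cone argument from scratch; the latter is a reasonable thing to do, but two details in your reconstruction are off.

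First, the radius you define does not actually satisfy \eqref{rstab} for the score $\xi^{(k,q)}$. Having $k+10$ points of $\M$ in each cone $x+C_j$ inside $B^d(x,r)$ does force $V_k(x,\M\cup\wA)\subset B^d(x,r)$, but $\xi^{(k,q)}(x,\cdot)$ also requires deciding, for each $y\in V_k(x,\M\cup\wA)$, whether $x\in V_k(y,(\M\cup\wA)\cup\{x\})$ --- this is the ``mutual'' indicator in $\rho^{(k,q)}$. That decision depends on points of $\M\cup\wA$ in $B^d(y,\|x-y\|)$, which need not lie inside $B^d(x,r)$ (only inside $B^d(x,2r)$). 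So \eqref{rstab} does not follow from the premise you state; one needs a constant-factor enlargement of your $r$ (this is exactly what the cited references do). Second, your boundary escape clause ``$\lambda_d((x+C_j)\cap B^d(x,r)\cap W)=0$'' only bites when $x\in\partial W$; for interior $x$ at small distance $\delta$ from $\partial W$, a cone pointing toward the boundary has $(x+C_j)\cap W$ of measure $O(\delta^d)$, so the cone can never acquire $k+10$ Poisson points and your $R_s$ would be $+\infty$ with positive probability, ruining \eqref{stab}. The right escape condition is that $(x+C_j)\cap W\subseteq B^d(x,r)$ (the cone has fully exited $W$ by radius $r$), combined with the convexity-based lower bound $\lambda_d\bigl((x+C_j)\cap B^d(x,r)\cap W\bigr)\geq c\,r^d$ whenever the cone has not yet escaped. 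Both fixes are routine and present in the references the paper cites, so this is a matter of incomplete detail rather than a fundamentally wrong approach.
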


\noindent{\em Remarks.} (i)  It is beyond the scope of this paper to give general conditions insuring that 
the matrix  $\Sigma$ is positive definite.  However, if $f_1 \equiv 1,\hdots, f_m \equiv 1$, $q_1 = \hdots = q_m = 1$, and if  $A_1,\hdots,A_m$ are disjoint and satisfy the regularity condition of Theorem 6.1 of \cite{PY1}, 
then  $\Sigma$ is positive definite, as seen by combining Remark (ii) following Proposition \ref{prop:PositiveDefiniteness} with Theorem 6.1  of \cite{PY1}. 
Technically speaking, this last theorem is stated for the case  $A_i = W$, but it is straightforward to
show that it also holds for the regular subsets of $W$.  Moreover, the results of \cite{PY1}
  may be extended to treat $q \geq 0$, yielding positive definiteness of $\Sigma$ in this case as well.

\noindent (ii) If $A_i=W$ and $f_i \equiv 1$, then $\langle \mu_{s}^{(i,k_i,q_i)}, f_i \rangle$ is simply the total edge length of the $k_i$-nearest neighbors graph on $\P_{sg}$ with $q_i$-th power-weighted edges. In this way we deduce from Theorem \ref{knnthm} multivariate rates of normal convergence for  $m$-vectors of total edge lengths of nearest neighbor graphs. The rates improve upon those which one can deduce from the main result of \cite{PW}, which considers only the distance at \eqref{eqn:SupNorm}; see, in particular, Theorem 5.1 of \cite{PW} for $d = 1$ and compare with Remark (iv) following Theorem \ref{mainthmcentered}.

\begin{proof}
We deduce this from Theorems \ref{mainthm} and \ref{mainthmcentered}.  The scores $\xi^{(k,q)}$, $ k \in \N$, $q\in[0,\infty)$, are translation invariant and thus satisfy \eqref{translatebd}. As shown in the proof of Theorem 3.1 of \cite{LSY}, or in Subsection 6.3 of \cite{Pe07}, the scores $(\xi_s^{(k_1,q_1)})_{s\geq 1},\hdots,(\xi_s^{(k_m,q_m)})_{s\geq 1}$ have monotone radii of stabilization satisfying \eqref{eqn:Rsinside},
they are exponentially stabilizing as at \eqref{stab} and \eqref{stabStationary}, and they also satisfy the moment conditions \eqref{mom} and \eqref{momStationary}.
\end{proof}

\vskip.3cm

\noindent{\bf b.  Entropy vector.}  The {\em directed} $k$-nearest neighbors graph on $\X$, denoted $NG'_k(\X)$, is the directed graph with vertex set $\X$ obtained by including a directed edge from each point to its  $k$-nearest neighbors.
 The total edge length of the graph $NG'_k(\X)$ endowed with $q$th power-weighted edges is
$$
L^{(k,q)}(\X):= L_{NG'_k}^{(q)}(\X):= \sum_{x \in \X} \tilde{\xi}^{(k,q)}(x, \X),
$$
where $\tilde{\xi}^{(k,q)}(x, \X) := \sum_{y \in V_k(x, \X)} ||x - y||^q.$

For this application we put $k = 1$ and we assume that $g$ is a probability density, i.e., $\int_W g(x) \, \dint x =1$.
Then given $\rho \in (0, \infty), \rho \neq 1$, the R\'enyi $\rho$-entropy of $g$ is
$$
H_{\rho}(g) := (1 - \rho)^{-1} \log \int_{\R^d} g(x)^{\rho} \, \dint x.
$$
If $g$ is continuous and bounded away from zero and infinity on $W$, then $s^{q/d-1} L^{(1,q)} (\P_{sg})$
is a consistent estimator of a multiple of $\int g(x)^{1 - q/d} \, \dint x$, as seen by combining Theorem 2.2 of \cite{PY6} with Remark (vii) on page
2175 of \cite{PY6}.
For $q_1,\hdots,q_m\in[0,\infty)$ we consider the entropy measures
$$
\mu_s^{(i, q_i)}:= s^{q_i/d} \sum_{x \in \P_{sg} \cap A_i } \tilde{\xi}^{(1,q_i)}(x, \P_{sg})\delta_x, \quad i \in \{1,\hdots,m\}.
$$
If $A_1=\hdots=A_m=W$ and $f_1 \equiv 1, \hdots, f_m \equiv 1$, $(\langle \mu_s^{(1, q_1)}, f_1 \rangle,\hdots,\langle \mu_s^{(m, q_m)}, f_m \rangle)$ reduces to a {\em $\rho$-entropy vector}. The following result in particular establishes a rate of multivariate normal convergence for entropy vectors.
The proof is similar to that of Theorem \ref{knnthm}.

\begin{theo} \label{entropythm}
The entropy measures $(\mu_s^{(1, q_1)})_{s \geq 1}, \hdots, \mu_s^{(m, q_m)})_{s \geq 1}$ satisfy the conclusions of Theorems \ref{mainthm} and \ref{mainthmcentered}.
\end{theo}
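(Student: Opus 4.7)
The plan is to cast the entropy functionals into the framework of Section \ref{mainresults} and invoke Theorems \ref{mainthm} and \ref{mainthmcentered}, following the template already used for Theorem \ref{knnthm}. First, I would take $\xi^{(i,q_i)}(x,\M) := \sum_{y \in V_1(x,\M)} \|x-y\|^{q_i}$, which is translation invariant in the spatial coordinate (so \eqref{translatebd} holds trivially). A direct scaling computation---if $y_0$ is the nearest neighbor of $x$ in $\M$, then $x + s^{1/d}(y_0 - x)$ is the nearest neighbor of $x$ in $x + s^{1/d}(\M - x)$ at distance $s^{1/d}\|y_0 - x\|$---yields $\xi_s^{(i,q_i)}(x, \M) = s^{q_i/d} \tilde{\xi}^{(1,q_i)}(x, \M)$, so the entropy measures $\mu_s^{(i,q_i)}$ fit the template \eqref{randmeasure} with these scaled scores.

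For intrinsic exponential stabilization I would take $R_s((x,m_x), \M)$ to be twice the distance from $x$ to its tenth nearest point in $\M$, using a fixed tie-breaking rule for non-uniqueness. The factor of ten provides the buffer required by \eqref{rstab} against addition of up to nine extra points. This radius is monotone in $\M$ and satisfies the locality condition \eqref{eqn:Rsinside}. The exponential tail bounds \eqref{stab}--\eqref{stabStationary} follow from Poisson concentration: for $\M$ equal to $\P_{sg}$ or $\P_{sg(y)}$, the probability that fewer than ten points of $\M$ fall in $B^d(x,r)$ decays like $e^{-c sr^d}$, using that $g$ is bounded below on the compact convex $W$ so that $\lambda_d(B^d(x,r) \cap W) \geq c' r^d$ uniformly in $x \in W$ for small $r$.

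The $(6+p)$-th moment conditions \eqref{mom} and \eqref{momStationary} reduce to showing that $(s^{1/d} D_s(x))^{q_i}$ has bounded $(6+p)$-th moment, where $D_s(x)$ is the distance from $x$ to its nearest neighbor in $\P_{sg} \cup (\A, M_\A) \cup \{(x,m_x)\}$ or its stationary counterpart. By the same Poisson concentration, $\PP(s^{1/d} D_s(x) \geq r) \leq C e^{-c r^d}$ uniformly in $s \geq 1$, $x \in W$, and $\A$ with $|\A| \leq 9$, so all moments of $(s^{1/d} D_s(x))^{q_i}$ are bounded. Once these three conditions---scaled-score form, intrinsic exponential stabilization, and the moment bound---are verified, Theorems \ref{mainthm} and \ref{mainthmcentered} apply directly. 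The main point requiring care, but presenting no real obstacle, is the bookkeeping around the extra $s^{q_i/d}$ factor: it must be absorbed into the scaling defining $\xi^{(i,q_i)}$ rather than treated as an external prefactor, after which the argument parallels that of Theorem \ref{knnthm}.
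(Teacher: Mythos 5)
Your proposal is correct and follows the paper's own route, which simply reduces Theorem \ref{entropythm} to Theorems \ref{mainthm} and \ref{mainthmcentered} by the same argument as Theorem \ref{knnthm}: translation invariance gives \eqref{translatebd}, and the scaling identity $\tilde{\xi}^{(1,q)}(x,\,x+s^{1/d}(\M-x)) = s^{q/d}\tilde{\xi}^{(1,q)}(x,\M)$ is exactly why the $s^{q_i/d}$ prefactor in $\mu_s^{(i,q_i)}$ is absorbed into the scaled scores of \eqref{xis}, so your emphasis on this bookkeeping is well placed. One small quibble: for the \emph{directed} $1$-nearest-neighbor score, adding points can only bring the nearest neighbor closer, so the distance to the nearest point already satisfies \eqref{rstab} and no factor-of-ten buffer for the nine extra points is needed (that buffer is what one uses for \emph{undirected} $k$-NN scores as in Theorem \ref{knnthm}); your more conservative radius is still monotone, satisfies \eqref{eqn:Rsinside}, and has the required exponential tails, so the proof goes through unchanged.
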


\noindent{\em Remark}.  If $f_1 \equiv 1, \hdots, f_m \equiv 1$,  $q_1 = \hdots = q_m = 1$, and if  $A_1,\hdots,A_m$ are disjoint and satisfy the regularity condition of Theorem 6.1 of \cite{PY1}, 
then  $\Sigma$ is positive definite, as seen by combining Remark (ii) following Proposition \ref{prop:PositiveDefiniteness} with Theorem 6.1  of \cite{PY1}.  Strictly speaking this last theorem treats the case that $A_i = W$, but the methods easily extend to cover the case that $A_i$ are regular subsets of $W$.   Also, Theorem 6.1  of \cite{PY1} examines  the case 
of undirected nearest neighbor graphs, but the proof methods may be easily modified to 
treat directed nearest neighbor graphs as well, as noted in the penultimate sentence on page 1022 of \cite{PY1}.

\vskip.3cm

\noindent{\bf c. Degree count vector.} As shown in \cite[Lemma 8.4]{Yubook}, for all $d, k \in \N$ there exists a minimal constant $C_{\text{deg}}(k,d) \in (0, \infty)$ such that the degree of every node in $NG_k(\P_{sg})$ is a.s.\ bounded by $C_{\text{deg}}(k,d)$. For all $j \in \{1,\hdots,C_{\text{deg}}(k,d)\}$ define
$$
\xi^{(k,j)}(x, \X) := {\bf 1} \{ {\text{degree of}} \ x  \ {\text{in}}  \ NG_k(\X\cup\{x\})  \ {\text{equals}} \  j \}.
$$
For $j_1,\hdots,j_m\in \{1,\hdots,C_{\text{deg}}(k,d)\}$ we consider the induced measures
$$
\mu_s^{(i,k,j_i)}:= \sum_{x \in \P_{sg} \cap A_i} \xi^{(k,j_i)}_s(x, \P_{sg}) \delta_x, \quad i\in\{1,\hdots,m\},
$$
with $\xi^{(k,j_i)}_s$ defined in terms of $\xi^{(k,j_i)}$ as at \eqref{xis}. If $A_i=W$ and $f_i \equiv 1$, then $\langle \mu_s^{(i,k,j_i)}, f_i \rangle$ is the number of vertices in $NG_k(\P_{sg})$ of degree $j_i$.

\begin{theo} \label{degreecount}
The measures $(\mu_{s}^{(1,k,j_1)})_{s \geq 1},\hdots,(\mu_{s}^{(m,k,j_m)})_{s \geq 1}$ satisfy the conclusions of Theorems \ref{mainthm} and \ref{mainthmcentered}.
\end{theo}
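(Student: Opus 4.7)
The plan is to verify the hypotheses of Theorems \ref{mainthm} and \ref{mainthmcentered} for the indicator scores $\xi^{(k,j)}$ and then apply these theorems directly, just as in the proof of Theorem \ref{knnthm}. Since the scores take values in $\{0,1\}$, the moment conditions \eqref{mom} and \eqref{momStationary} hold trivially with $C_{mom,p}=1$ for every $p > 0$. The scores $\xi^{(k,j_i)}$ are translation invariant in the spatial coordinate (the degree of a point in the $k$-nearest neighbors graph depends only on the relative positions of surrounding points), so \eqref{translatebd} is automatic and the scaling definition \eqref{xis} makes $\xi_s^{(k,j_i)}$ into scaled scores.

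The heart of the argument is to produce a monotone radius of stabilization satisfying \eqref{eqn:Rsinside}, \eqref{stab}, and \eqref{stabStationary}. I would use exactly the radius that stabilizes the $k$-nearest neighbors graph in \cite{LSY, Pe07, PY6}: to determine whether $x$ has degree $j$ in $NG_k(\X \cup \{x\})$, it suffices to know (i) the $k$ nearest neighbors of $x$ and (ii) for each candidate neighbor $y$ near $x$, whether $x$ lies among the $k$ nearest neighbors of $y$. Both are determined by inspecting a ball of radius comparable to $\max_{y \in B(x,r)} r_k(y, \X)$, where $r_k(y, \X)$ denotes the distance from $y$ to its $k$-th nearest neighbor. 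Standard volume-covering arguments (partitioning $B(x,r)$ into cones or congruent small sectors and noting that if each sector contains at least $k+1$ points of $\P_{sg}$, then every point in the ball has its $k$ nearest neighbors inside a doubled ball) give the exponential tail bounds \eqref{stab} and \eqml{stabStationary} for this radius; this is the argument already carried out in \cite[proof of Theorem 3.1]{LSY} and Subsection 6.3 of \cite{Pe07}. The radius is monotone (adding points can only decrease $r_k$) and is locally determined in the sense of \eqref{eqn:Rsinside}, since the radius itself is defined from $\X \cap B^d(x, R_s)$.

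Having verified that $(\xi_s^{(k,j_1)})_{s\geq 1}, \ldots, (\xi_s^{(k,j_m)})_{s \geq 1}$ are scaled, intrinsically exponentially stabilizing, and satisfy the $(6+p)$-moment conditions, the conclusions of Theorem \ref{mainthm} and Theorem \ref{mainthmcentered} apply directly (with the Lipschitz requirements on $g$ and $f_i$ imposed implicitly as in the rest of Section \ref{Applic}). I do not foresee a serious obstacle here: the boundedness of the scores eliminates any moment difficulties, and the stabilization argument is essentially a reuse of the proof for edge-length statistics, since the radius already produced there is large enough to also determine the degree indicator.
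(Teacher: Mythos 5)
Your proposal is correct and follows essentially the same route as the paper: verify that the indicator scores are scaled (via translation invariance), satisfy the moment conditions trivially, and are intrinsically exponentially stabilizing via the standard $k$-nearest-neighbor radius of stabilization from Subsection 6.3 of \cite{Pe07}, then invoke Theorems \ref{mainthm} and \ref{mainthmcentered}. The paper's proof is a terse citation of \cite{Pe07} for the stabilization step, whereas you spell out the cone-covering argument; the content is the same.
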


\begin{proof}
The scores $\xi^{(k,j)}$ are translation invariant and so satisfy  \eqref{translatebd}. The scores $(\xi_{s}^{(k,j_1)})_{s \geq 1},$ $\hdots,(\xi_{s}^{(k, j_m)})_{s \geq 1}$ are intrinsically exponentially stabilizing with radius of stabilization given in Subsection 6.3 of \cite{Pe07}. They clearly satisfy moment conditions
\eqref{mom} and \eqref{momStationary}. Hence the conditions of Theorems \ref{mainthm} and \ref{mainthmcentered} are all satisfied.
\end{proof}

\vskip.3cm

\noindent{\bf d.  Multivariate statistics for equality of distributions.}
Consider the nearest neighbors graph $NG_1(\P_{sg})$ and with probability $\pi_j$, $j \in \{1,\hdots, \ell \}$, we
color the nodes in $\P_{sg}$ with  color $j$,  independently of the sample and of the colors assigned to the other points. Let $Y_j:= Y_j(\P_{sg})$ be the number of edges in  $NG_1(\P_{sg})$ which join nodes of color $j$. The vector $(Y_1(\P_{sg}),\hdots,Y_\ell(\P_{sg}))$ features in tests for equality of distributions.

Assign to each $x \in \P_{sg}$ an independent mark $m_x$ taking values in the space $\{1,2,\hdots,\ell\}$ with the probabilities $\pi_j$, $j\in \{1,\hdots,\ell\}$, and write $\wx := (x, m_x)$, which gives  a marked Poisson process $\widehat{\P}_{sg}$. Given $x\in W$ and a point configuration $\X$ in $\R^d$ we let ${\cal E}(x,\X)$ denote the collection of edges in $NG_1(\X\cup\{x\})$ containing $x$. For all $j \in \{1,\hdots,\ell\}$, define the scores
\be \label{mtest}
\xi^{(j)}(\wx, \widehat{\X}) := \frac{1} {2}  \sum_{\{x,y \} \in {\cal E}(x,\X)} {\bf 1}\{m_x = m_y = j \}.
\ee
Given \eqref{mtest}, we define $\xi^{(j)}_s$ in terms of $\xi^{(j)}$ as at \eqref{xis}. For $j_1.\hdots,j_m\in\{1,\hdots,\ell \}$ we study the measures
$$
\mu_s^{(i,j_i)} := \sum_{\wx \in \widehat{\P}_{sg} \cap \widehat{A}_i} \xi^{(j_i)}_s(\wx, \widehat{\P}_{sg}) \delta_x, \quad i\in\{1,\hdots,m\}.
$$
  When $A_i=W$ and $f_i \equiv 1$ we have $\langle \mu_s^{(i,j_i)}, f_i \rangle = \sum_{\wx \in \widehat{\P}_{sg}} \xi^{(j_i)}_s(\wx, \widehat{\P}_{sg}) = Y_{j_i}$.  In the case that $m =\ell $, $A_1=\hdots=A_m=W$, $f_1\equiv\hdots\equiv f_m\equiv1$,
 and $j_i=i$, the next result provides in particular rates of multivariate normal convergence for the $\ell$-vector $(Y_1,\hdots,Y_\ell)$.

\begin{theo} \label{Thmtest}
The measures $(\mu_{s}^{(1,j_1)})_{s \geq 1},\hdots,(\mu_{s}^{(m,j_m)})_{s \geq 1}$ satisfy the conclusions of Theorems \ref{mainthm} and \ref{mainthmcentered}.
\end{theo}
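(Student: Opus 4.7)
The plan is to verify the hypotheses of Theorems \ref{mainthm} and \ref{mainthmcentered} for the marked scores $\xi^{(j)}$ defined at \eqref{mtest}, following essentially the same template as the proof of Theorem \ref{degreecount}. I would check in turn: translation invariance (which implies \eqref{translatebd}), intrinsic exponential stabilization, and the moment conditions \eqref{mom} and \eqref{momStationary}. The overall structure is a routine verification — the main task is keeping track of the marks, but they are independent of the spatial coordinates and take only finitely many values, so they cause no difficulty.

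For translation invariance, I would observe that $\xi^{(j)}((x,m_x), \widehat{\mathcal{X}})$ depends only on the edges of $NG_1(\mathcal{X} \cup \{x\})$ incident to $x$ together with the marks on the endpoints of those edges. A simultaneous shift of the spatial coordinate of $x$ and of every point of $\widehat{\mathcal{X}}$ by $y \in \R^d$ preserves all interpoint distances and leaves the marks unchanged. Hence $\xi^{(j)}((x, m_x), \widehat{\mathcal{X}}) = \xi^{(j)}((x+y, m_x), \widehat{\mathcal{X}}+y)$, which is the stronger form of \eqref{translatebd}. For intrinsic exponential stabilization, I would take the same radius of stabilization $R_s$ used for the degree count of Theorem \ref{degreecount} (see Subsection 6.3 of \cite{Pe07}), which is essentially twice the distance from $x$ to its nearest neighbor in $\mathcal{P}_{sg}$. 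The edges of $NG_1(\widehat{\mathcal{X}}\cup\{(x,m_x)\})$ containing $x$ are determined by the points of $\widehat{\mathcal{X}}$ inside $\widehat{B}^d(x, R_s((x,m_x), \widehat{\mathcal{X}}))$; since the marks on these neighbors are part of the marked configuration, the same restriction determines $\xi^{(j)}$. This $R_s$ is monotone, satisfies \eqref{eqn:Rsinside}, and has exponentially decaying tails of the required form \eqref{stab} and \eqref{stabStationary} by the standard estimate for the probability of an empty Poisson ball.

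For the moment conditions, I would simply note that $\xi^{(j)} \in [0, C_{\text{deg}}(1,d)/2]$ is uniformly bounded, since by \cite[Lemma 8.4]{Yubook} the degree of any vertex in the $1$-nearest neighbors graph on a locally finite set in $\R^d$ is almost surely bounded by the constant $C_{\text{deg}}(1,d)$; boundedness is clearly preserved under the insertion of at most nine points and under the scaling at \eqref{xis}. Consequently \eqref{mom} and \eqref{momStationary} hold trivially for every $p > 0$ with $C_{mom,p} = (C_{\text{deg}}(1,d)/2)^{6+p}$. With all hypotheses verified, the conclusions of Theorems \ref{mainthm} and \ref{mainthmcentered} follow by direct application. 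There is no genuine obstacle in the argument; if anything, the most delicate point is to confirm that the same radius $R_s$ used in the unmarked degree count setting transfers to the marked setting, which it does because $R_s$ is a purely geometric quantity unaffected by the independent marking.
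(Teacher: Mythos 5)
Your proposal is correct and follows essentially the same route as the paper's proof: translation invariance (hence the scores are scaled), intrinsic exponential stabilization via the construction in Subsection 6.3 of \cite{Pe07}, and moment conditions from the universal degree bound $C_{\text{deg}}(1,d)$ of \cite[Lemma 8.4]{Yubook}; your observation that the radius $R_s$ is a purely geometric quantity and therefore unaffected by the independent marking is a useful elaboration the paper leaves implicit. One inaccuracy to fix: the radius of stabilization is \emph{not} essentially twice the distance from $x$ to its nearest neighbor — since $\mathcal{E}(x,\X)$ also contains edges $\{y,x\}$ for which $x$ is the nearest neighbor of $y$, and such $y$ can lie much farther from $x$ than $x$'s own nearest neighbor, the radius must come from the cone construction of \cite{Pe07} (proportional to the maximum, over a finite covering of a neighborhood of $x$ by narrow cones, of the distance to the first sample point in each cone), which is what correctly controls these distant in-edges.
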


\vskip.3cm

\noindent{\em Remark}.
This result adds to the paper \cite{RR} and to \cite[Theorem 12.7]{CGS}, which both consider binomial input instead of Poisson input $\P_{sg}$ and which provide rates involving extra logarithmic factors for the $\d_{convex}$-distance (or generalizations of it). For two sample tests based on test statistics similar to $(Y_1,\hdots,Y_\ell)$ and their asymptotic analysis we refer the reader to e.g.\ \cite{Henze,Sch}.

\begin{proof}
We deduce  Theorem \ref{Thmtest} from Theorems \ref{mainthm} and \ref{mainthmcentered} with mark space $\MM := \{1,\hdots,\ell \}$. This goes as follows. The scores $(\xi^{(j_1)}_s)_{s\geq 1},\hdots,(\xi^{(j_m)}_s)_{s\geq 1}$ are scaled and they are intrinsically exponentially stabilizing, as shown in Subsection 6.3 of \cite{Pe07}. Since the degrees of nodes in $NG_1(\X)$ are bounded by $C_{\text{deg}}(1,d)$, the scores obviously satisfy the moment conditions \eqref{mom} and \eqref{momStationary}. Hence the conditions of Theorems \ref{mainthm} and \ref{mainthmcentered} are all satisfied.
\end{proof}

\subsection{Multivariate statistics of random geometric graphs}

We now consider  multivariate statistics  of the random geometric graph $G(\P_{sg}, \varrho s^{-1/d})$,  $\varrho \in (0, \infty)$, as defined in Subsection \ref{optimalitysection} for a homogeneous Poisson process. We will also study the more general graph $G(\P_{sg}, r_s)$, where $(r_s)_{s \geq 1}$ is a family of positive scalars.

For a thorough reference on random geometric graphs we refer to \cite{Pbook}, where some multivariate central limit theorems were established. In the special case that  $H_s^{(i)}$, $i \in \{1,\hdots,m\}$, (see \eqref{eqn:Hs}) are expressible as local U-statistics, then a version of  Theorem \ref{mainthm}(a) follows from Theorem 6.11 of the PhD thesis \cite{SchultePhD}. Theorem 7.11 of \cite{SchultePhD} uses Theorem 6.11 of \cite{SchultePhD} to investigate the joint behavior of the number of edges and the total edge length of random geometric graphs. Subsection 5.1 of \cite{RST} provides a similar application to random geometric graphs.  The following results add to those in \cite{Pbook, RST, SchultePhD}.

\vskip.3cm

\noindent{\bf a. Component count vector.}  By a component of $G(\X, r)$ we mean a maximal connected subgraph. Given $k \in \N$ and $r \in (0, \infty)$, let $N^{r}_k(\X)$ be the number of components of $G(\X, r)$ of size $k$. Defining the score function
$$
\xi^{(k,r)}(x, \X):= \frac{1}{k} {\bf 1}\{ x \ {\rm{ belongs \ to \ component \ of}} \ G(\X \cup \{x \}, r) \ {\rm{ of \ size}} \ k\}
$$
gives $N_k^{r}(\X)= \sum_{x \in \X} \xi^{(k,r)}(x, \X)$. For $k_1,\hdots,k_m\in\N$ let
$$
\mu_s^{(i, k_i, r_s)}:= \sum_{x \in \P_{sg} \cap A_i} \xi^{(k_i,r_s)}(x, \P_{sg})\delta_x, \quad i\in\{1,\hdots,m\},
$$
be the induced measures, with $(r_s)_{s \geq 1}$ as above.

\begin{theo} \label{RGGthm}
(a) When $\sup_{s\geq 1} s r_s^d<\infty$, the measures $({\mu}_{s}^{(1,k_1, r_s)})_{s \geq 1},\hdots,  ({\mu}_{s}^{(m,k_m, r_s)})_{s \geq 1}$ satisfy the conclusions of Theorem \ref{mainthmcentered}.
\vskip.1cm
\noindent (b) Let  $r_s = \varrho s^{-1/d}, \varrho \in (0, \infty)$. The measures $({\mu}_{s}^{(1,k_1, r_s)})_{s \geq 1},\hdots,  ({\mu}_{s}^{(m, k_m, r_s)})_{s \geq 1}$ satisfy the conclusions of Theorems \ref{mainthm} and \ref{mainthmcentered}.
\end{theo}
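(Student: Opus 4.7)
I would deduce Theorem \ref{RGGthm} from Theorems \ref{mainthm} and \ref{mainthmcentered} by verifying their hypotheses for the component count scores. The score $\xi^{(k,r)}$ is translation invariant in the spatial variable, since whether $x$ belongs to a size-$k$ component of $G(\mathcal{M} \cup \{x\}, r)$ depends only on pairwise distances; hence \eqref{translatebd} holds trivially with constant zero. In part (b), the identity $\xi^{(k, \varrho s^{-1/d})}(x, \mathcal{M}) = \xi^{(k, \varrho)}(x, x + s^{1/d}(\mathcal{M} - x))$---reflecting that dilation by $s^{1/d}$ converts the threshold $r_s = \varrho s^{-1/d}$ into $\varrho$---shows that the family $(\xi_s^{(k, r_s)})_{s \geq 1}$ is scaled in the sense of \eqref{xis} from the single function $\xi^{(k, \varrho)}$. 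Since $|\xi^{(k, r)}| \leq 1/k \leq 1$ deterministically, the moment conditions \eqref{mom} and \eqref{momStationary} hold for every $p > 0$.

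The core step is to exhibit a radius of stabilization with all the required properties. The basic geometric observation is that if a shell $B(x, R') \setminus B(x, R' - r_s)$ is devoid of points of a configuration $\mathcal{M}'$, then no edge of $G(\mathcal{M}' \cup \{x\}, r_s)$ has one endpoint inside $B(x, R' - r_s)$ and the other outside $B(x, R')$; hence the component of $x$ is confined to $B(x, R' - r_s)$ and the score depends only on $\mathcal{M}' \cap \wB(x, R')$. Defining $R_s((x, m_x), \mathcal{M})$ in terms of sufficiently many disjoint empty shells within $B(x, R_s)$ makes the construction robust against insertion of up to nine polluting points from $\wA$, since at most nine shells can be spoiled and still leave an empty one, thereby yielding \eqref{rstab}. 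The construction in Subsection~6.3 of \cite{Pe07}, which is the same stabilization radius invoked in the proofs of Theorems \ref{knnthm} and \ref{degreecount} above, additionally produces an $R_s$ which is monotone and locally determined in the sense of \eqref{eqn:Rsinside}. Because $g$ is bounded away from zero on $W$ and $\sup_{s} s r_s^d < \infty$, a union bound over shells delivers the exponential tail $\PP(R_s((x, M_x), \mathcal{P}_{sg}) \geq r) \leq C_{stab} \exp(-c_{stab} s r^d)$ required by \eqref{stab}, and the same argument applied to $\mathcal{P}_{sg(y)}$ gives \eqref{stabStationary}.

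With these inputs in hand, part (a) follows from Theorem \ref{mainthmcentered}, which only requires \eqref{stab} and \eqref{mom}. For part (b), the scaled-score structure, translation invariance, monotonicity and locality of $R_s$, together with the stationary bounds \eqref{stabStationary} and \eqref{momStationary}, complete the hypotheses of Theorem \ref{mainthm}, and Theorem \ref{mainthmcentered} applies a fortiori. The principal obstacle is the joint verification of monotonicity, the locality condition \eqref{eqn:Rsinside}, and the stabilization identity \eqref{rstab} against the arbitrary nine-point perturbations $\wA$, a task made delicate by the intrinsically nonlocal dependence of the component of $x$ on the underlying configuration; this bookkeeping is handled in detail in \cite{Pe07}, which we cite rather than reproduce.
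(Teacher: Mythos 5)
Your treatment of translation invariance, the scaling identity $\xi^{(k, \varrho s^{-1/d})}(x, \M) = \xi^{(k, \varrho)}(x, x + s^{1/d}(\M - x))$, and the deterministic bound $|\xi^{(k,r)}| \le 1$ giving the moment conditions all agree with the paper. The gap lies in the radius of stabilization. You propose hunting for empty annuli of width $r_s$, but that construction is the wrong one for component counts in the Gilbert graph, for two reasons. First, an empty-annulus radius is \emph{anti}-monotone: if $\M_1 \subseteq \M_2$, any annulus empty for $\M_2$ is also empty for $\M_1$, so $R_s(\cdot,\M_1) \leq R_s(\cdot,\M_2)$ — the reverse of the inequality required by the paper's definition of monotone, which is part of intrinsic exponential stabilization and is needed for Theorem \ref{mainthm} in part (b). Second, the annulus of width $r_s$ at distance $i r_s$ from $x$ has volume of order $i^{d-1} r_s^d$, so with $s r_s^d$ bounded away from zero its probability of being empty is of order $\exp(-c\, i^{d-1})$, which is summable for $d\ge 2$; the product $\prod_i (1-\exp(-c\, i^{d-1}))$ is then bounded away from zero, so $\PP(R_s \geq u)$ does not decay with $u$, and the required bound $\PP(R_s \geq u) \leq C\exp(-c s u^d)$ of \eqref{stab} fails. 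The monotone radius in Subsection 6.3 of \cite{Pe07} invoked for Theorems \ref{knnthm} and \ref{degreecount} is the $k$-nearest-neighbor radius, which shrinks as points are added; it is not the annulus radius and does not transfer to this setting.

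The paper's proof sidesteps all of this by taking the \emph{deterministic} radius $R_s \equiv k_{\max} r_s$, $k_{\max} := \max\{k_1,\hdots,k_m\}$. One only needs to decide whether $x$ lies in a component of size exactly $k_i$, not to determine the whole component: a connected component of $G(\cdot,r_s)$ through $x$ with at most $k_i$ vertices lies in $B^d(x, (k_i - 1) r_s)$, and if more than $k_i$ vertices are already visible inside $B^d(x, k_i r_s)$ then the score vanishes on both the restricted and the full configuration, with or without the extra set $\wA$. Thus $R_s = k_{\max} r_s$ satisfies \eqref{rstab}; being constant in $\M$, it is trivially monotone and satisfies \eqref{eqn:Rsinside}; and the tail bound is immediate: $\PP(R_s \geq u) = {\bf 1}\{u \leq k_{\max}r_s\} \leq \exp(s(k_{\max}r_s)^d - su^d) \leq C\exp(-su^d)$ using $\sup_s s r_s^d < \infty$. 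Replacing your annulus construction by this deterministic radius repairs the argument and matches the paper.
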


\noindent{\em Remark}.
When $A_i=W$ and $f_i \equiv 1$ we have $\langle \mu_s^{(i, k_i, r_s)}, f_i \rangle = \sum_{x \in \P_{sg}} \xi^{(k_i, r_s)}(x, \P_{sg}) = N_{k_i}^{ r_s}(\P_{sg})$. Let $\bar{N}_{k_i}^{r_s}(\P_{sg}) := N_{k_i}^{r_s}(\P_{sg}) - \E N_{k_i}^{r_s}(\P_{sg})$. For $\lim_{s\to\infty} sr_s^d \in (0, \infty)$ Theorem 3.11 of \cite{Pbook} establishes the normal convergence of $s^{-1/2} \left(  \bar{N}_{i_1}^{r_s}(\P_{sg}) ,\hdots, \bar{N}_{i_m}^{r_s}(\P_{sg}) \right)$ but does not attempt to find rates.

\begin{proof}
(a) We deduce this result from Theorem  \ref{mainthmcentered}. The scores $(\xi^{(k_1, r_s)})_{s\geq 1},\hdots,(\xi^{(k_m,r_s)})_{s\geq 1}$ do not, in general, satisfy scaling as at \eqref{xis}. However, they are intrinsically exponentially stabilizing. To see this, put $k_{\max}:= \max\{k_1,\hdots,k_m\}$ and define $R_s(x, \X):= k_{\max} r_s$. We note that $R_s$ is monotone and satisfies \eqref{rstab} and \eqref{eqn:Rsinside}. Moreover, we have
 $$
 \PP (R_s(x, \P_{sg}) \geq u) = \begin{cases} 1 & \quad  u \leq k_{\max} r_s, \\
 0  & \quad  u > k_{\max} r_s.
 \end{cases}
 $$
 It follows that for all $s \geq 1$ and $u > 0$,
 $$
 \PP (R_s(x, \P_{sg}) \geq u) \leq \exp( -su^d + s(k_{\max} r_s)^d) \leq C \exp(-s u^d),
 $$
 where we use $\sup_{s \geq 1} \exp(k_{\max}^d sr_s^d) \leq C$, $C \in (0, \infty)$ a constant. This proves \eqref{stab} and similarly we obtain \eqref{stabStationary}. The scores $(\xi^{(k_1, r_s)})_{s\geq 1},\hdots,(\xi^{(k_m,r_s)})_{s\geq 1}$ satisfy the moment conditions \eqref{mom} and \eqref{momStationary}. The conclusion follows from Theorem \ref{mainthmcentered}.

 \noindent(b) Since $\xi^{(k, \rho s^{-1/d})}(x, \X) = \xi^{(k, \rho)}(x, x + s^{1/d}(\X - x))$, the scores $(\xi^{(k_1, s^{-1/d} \rho)})_{s \geq 1}$,$\hdots$, $(\xi^{(k_m, s^{-1/d} \rho)})_{s \geq 1}$ are scaled, i.e., satisfy \eqref{xis} with $\xi^{(i)}$ put to be $\xi^{(k_i, \varrho )}$ for $i\in\{1,\hdots,m\}$. Now it suffices to follow the proof of part (a) and to apply Theorems \ref{mainthm} and \ref{mainthmcentered}.
\end{proof}

\vskip.3cm
\noindent{\bf b. Degree count vector.} Fix  $r > 0$.  For $j \in \N_0:= \N \cup \{0\}$ define the score function
$$
\xi^{(j,r)}(x, \X) :=   {\bf 1} \{ {\text{degree of}} \ x  \ {\text{in}}  \ G(\X \cup \{x\},  r )  \ {\text{equals}} \  j \}.
$$
Then $D_j^{r}(\X) := \sum_{x \in \X}  \xi^{(j,r)}(x, \X)$ counts the number of vertices of degree $j$ in $G(\X, r)$. For $j_1,\hdots,j_m\in\N_0$ consider the measures
$$
\mu_s^{(i,j_i,r_s)}:= \sum_{x \in \P_{sg} \cap A_i} \xi^{(j_i,r_s)}(x, \P_{sg})\delta_x, \quad i\in\{1,\hdots,m\},
$$
and note that when $A_i=W$ and $f_i \equiv 1$ we have  $\langle \mu_s^{(i,j_i,r_s)}, f_i \rangle = D_{j_i}^{r_s}(\P_{sg})$.

\begin{theo} \label{RGGdeg}  (a) When $\sup_{s\geq 1} s r_s^d <\infty$, the measures $({\mu}_{s}^{(1,j_1, r_s)})_{s \geq 1},\hdots,  ({\mu}_{s}^{(m,j_m, r_s)})_{s \geq 1}$ satisfy the conclusions of Theorem \ref{mainthmcentered}.
\vskip.1cm
\noindent (b) Let $r_s = \varrho s^{-1/d}, \varrho \in (0, \infty)$. The measures $({\mu}_{s}^{(1,j_1, r_s)})_{s \geq 1},\hdots,({\mu}_{s}^{(1,j_m, r_s)})_{s \geq 1}$ satisfy the conclusions of Theorems \ref{mainthm} and \ref{mainthmcentered}.
\end{theo}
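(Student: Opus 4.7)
The argument parallels that of Theorem \ref{RGGthm}. In both parts the key observation is that the degree of $x$ in $G(\X\cup\{x\},r)$ depends only on points of $\X$ within Euclidean distance $r$ of $x$, so $\xi^{(j,r)}(x,\X)$ is a local functional with deterministic locality $r$. The plan is therefore to exhibit an explicit, deterministic, monotone radius of stabilization satisfying \eqref{eqn:Rsinside}, verify the exponential stabilization bounds \eqref{stab}-\eqref{stabStationary} from the hypothesis $\sup_{s\geq 1}sr_s^d<\infty$, and observe that all moment conditions hold trivially because the scores are $\{0,1\}$-valued.

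For part (a), set $R_s(x,\X):=r_s$. Since $\xi^{(j,r_s)}(x,\X)$ depends only on $\X\cap B^d(x,r_s)$, the relation \eqref{rstab} holds (even with $\wA$ replaced by $\wA\cup\{\text{anything}\}$ outside $B^d(x,r_s)$), and \eqref{eqn:Rsinside} and monotonicity are immediate since $R_s$ is constant. For any $u>0$, $\PP(R_s(x,\P_{sg})\geq u)\in\{0,1\}$ with value $1$ only when $u\leq r_s$; using $\sup_{s\geq 1}sr_s^d<\infty$ one gets, for a constant $C\in(0,\infty)$,
\[
\PP(R_s(x,\P_{sg})\geq u)\leq\exp\bigl(-su^d+sr_s^d\bigr)\leq C\exp(-su^d),\qquad s\geq 1,\ u>0,
\]
so that \eqref{stab} holds, and the same estimate applied to stationary Poisson input $\P_{sg(y)}$ yields \eqref{stabStationary}. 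Since $0\leq\xi^{(j,r_s)}\leq 1$, the moment conditions \eqref{mom} and \eqref{momStationary} are satisfied for every $p>0$. Theorem \ref{mainthmcentered} then gives the conclusion of part (a).

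For part (b), write $r_s=\varrho s^{-1/d}$ and note the identity
\[
\xi^{(j,\varrho s^{-1/d})}(x,\X)=\xi^{(j,\varrho)}\bigl(x,\,x+s^{1/d}(\X-x)\bigr),
\]
so that the scores $(\xi^{(j_i,r_s)})_{s\geq 1}$ are scaled in the sense of \eqref{xis} with $\xi^{(i)}:=\xi^{(j_i,\varrho)}$. Because $\xi^{(j,\varrho)}$ is translation invariant in the spatial coordinate, \eqref{translatebd} holds (with constant zero). All other hypotheses - intrinsic exponential stabilization and the moment conditions - are verified exactly as in part (a). Hence both Theorems \ref{mainthm} and \ref{mainthmcentered} apply, yielding part (b).
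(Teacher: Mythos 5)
Your proof is correct and takes essentially the same route as the paper: take $R_s\equiv r_s$ as the deterministic radius of stabilization, use $\sup_{s\ge1}sr_s^d<\infty$ to verify \eqref{stab} and \eqref{stabStationary}, note the moments are trivially bounded since the scores are indicators, and in part (b) observe the scores are scaled and translation invariant. You have simply spelled out the estimate the paper delegates by reference to the proof of Theorem \ref{RGGthm}.
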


\begin{proof}
(a) The scores $(\xi^{(j_1, r_s)})_{s\geq 1},\hdots,(\xi^{(j_m,r_s)})_{s\geq 1}$ are intrinsically exponentially stabilizing, with radius of stabilization $R_s$ equal to $r_s$. Clearly the scores satisfy the moment conditions \eqref{mom} and \eqref{momStationary}. The result follows from Theorem \ref{mainthmcentered}.

\noindent (b) When $r_s = \varrho s^{-1/d}$, the scores $(\xi^{(j_1,r_s)})_{s \geq 1},\hdots,(\xi^{(j_m,r_s)})_{s \geq 1}$ are scaled, i.e., satisfy
 \eqref{xis} with $\xi^{(i)}$ put to be $\xi^{(j_i, \varrho)}$. It suffices to follow the proof of part (a) and to apply Theorems \ref{mainthm} and \ref{mainthmcentered}.
\end{proof}

\vskip.3cm

\noindent{\bf c. Subgraph count vector.} Let $G_1,\hdots,G_m$ be finite connected graphs and let $k_i$ be the number of vertices of $G_i$, $i\in\{1,\hdots,m\}$. Given $r > 0$ put
\begin{align*}
{\xi}^{(i,r)} (x, \X)  := & k_i^{-1}({\text{number of subgraphs of}}   \ G(\X \cup \{x\}, r )  \ {\text{which are isomorphic}} \\
 & \quad \quad \text{ to $G_i$ and contain} \ x \  {\text{as a vertex}}).
\end{align*}
Notice that $\sum_{x \in \P_{sg}}  \xi^{(i,r)}(x, \P_{sg})$ is the number of subgraphs of $G(\P_{sg}, r)$ which are isomorphic to $G_i$.  The measures induced by $\xi^{(i,r_s)}$ are ${\mu}_s^{(i,r_s)}:= \sum_{x \in \P_{sg} \cap A_i} {\xi}^{(i,r_s)}(x, \P_{sg})\delta_x,$ where $(r_s)_{s \geq 1}$ is as above.

\begin{theo} \label{subgraphth}
(a) When $\sup_{s\geq 1} s r_s^d < \infty$, the measures $({\mu}^{(1, r_s)}_s)_{s\geq 1},\hdots,({\mu}^{(m,r_s)}_s)_{s\geq 1}$ satisfy the conclusions of Theorem \ref{mainthmcentered}.
\vskip.1cm
\noindent (b) Let  $r_s = \varrho s^{-1/d}, \varrho \in (0, \infty)$.
The measures $({\mu}^{(1, r_s)}_s)_{s\geq 1},\hdots,({\mu}^{(m,r_s)}_s)_{s\geq 1}$ satisfy the conclusions of Theorems \ref{mainthm} and \ref{mainthmcentered}.
\end{theo}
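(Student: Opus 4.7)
The proof will closely follow the templates used for the component and degree count vectors in Theorems \ref{RGGthm} and \ref{RGGdeg}, since subgraph counts share the same basic locality structure in the geometric graph $G(\P_{sg}, r_s)$.

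For part (a), the plan is to take the deterministic radius of stabilization
$$
R_s(x, \X) := k_{\max} \, r_s, \qquad k_{\max} := \max\{k_1,\hdots,k_m\}.
$$
Since $G_i$ is connected, any copy of $G_i$ in $G(\X \cup \{x\}, r_s)$ containing $x$ has all its vertices within graph-distance $k_i - 1$ of $x$ and hence within Euclidean distance $(k_i - 1) r_s \leq k_{\max} r_s$; thus $\xi^{(i, r_s)}(x,\X)$ is determined by $\X \cap B^d(x, R_s)$, so \eqref{rstab} holds and allows for the insertion of up to $9$ extra points. Monotonicity and \eqref{eqn:Rsinside} are immediate because $R_s$ is constant. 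The exponential tail bounds \eqref{stab} and \eqref{stabStationary} follow from $\PP(R_s \geq u) = 0$ for $u > k_{\max} r_s$ together with the estimate $1 \leq \exp(-s u^d) \exp(s(k_{\max}r_s)^d) \leq C \exp(-s u^d)$ for $u \leq k_{\max} r_s$, where the constant $C$ is finite thanks to $\sup_{s\geq 1} s r_s^d < \infty$.

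For the moment condition, I would use the deterministic bound
$$
\xi^{(i, r_s)}(x, \X) \leq \binom{|\X \cap B^d(x, k_i r_s)|}{k_i - 1},
$$
since every copy of $G_i$ through $x$ picks $k_i - 1$ further vertices from that ball. When $\X = \P_{sg} \cup (\A, M_\A)$ with $|\A| \leq 9$, the count $|\X \cap B^d(x, k_i r_s)|$ is dominated by a Poisson random variable of parameter $s\|g\|_\infty \kappa_d (k_{\max} r_s)^d$ plus $9$, and the uniform bound $\sup_{s\geq 1} s r_s^d < \infty$ gives uniform control of all moments. This establishes \eqref{mom} and \eqref{momStationary} for every $p > 0$, so Theorem \ref{mainthmcentered} applies.

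For part (b), with $r_s = \varrho s^{-1/d}$ the identity $\xi^{(i, \varrho s^{-1/d})}(x, \X) = \xi^{(i, \varrho)}(x, x + s^{1/d}(\X - x))$ shows that $(\xi^{(i, r_s)})_{s\geq 1}$ is a scaled score in the sense of \eqref{xis} generated by the translation invariant function $\xi^{(i, \varrho)}$, so \eqref{translatebd} holds trivially. Since $s r_s^d = \varrho^d$ is constant, the argument from part (a) delivers intrinsic exponential stabilization and the moment conditions, and Theorems \ref{mainthm} and \ref{mainthmcentered} apply. No part of this is expected to be delicate; the only point requiring mild care is the moment bound via the binomial coefficient estimate, which is routine given the uniform control of $s r_s^d$.
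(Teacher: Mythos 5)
Your proof is correct and takes essentially the same approach as the paper, which simply observes that the argument for component counts in Theorem \ref{RGGthm} carries over verbatim after replacing $k_{\max}$ with $\max\{k_1,\hdots,k_m\}-1$. The paper's stabilization radius $(\max\{k_1,\hdots,k_m\}-1)r_s$ is slightly tighter than your $k_{\max}r_s$ (a connected $k_i$-vertex subgraph through $x$ lies within graph distance $k_i-1$ of $x$), and your moment bound $\binom{|\X\cap B^d(x,k_i r_s)|}{k_i-1}$ omits a bounded combinatorial factor counting the copies of $G_i$ supported on a fixed $(k_i-1)$-subset, but neither point affects the conclusion.
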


\noindent{\em Remark. } Non-quantitative multivariate central limit theorems for the slightly different problem of counting induced subgraphs are given in Theorems 3.9 and 3.10 of \cite{Pbook}.

\begin{proof}
The proof follows that of Theorem \ref{RGGthm}, replacing $k_{\max}$ with $\max\{k_1,\hdots,k_m\}-1$.
\end{proof}

\vskip.3cm

\noindent{\bf d. Volume content vector.}  
Let ${\cal F}_k(G(\P_{sg}, r )), 1 \leq k \leq d,$  be the collection of $k$-faces in the  clique complex of the Gilbert graph $G(\P_{sg}, r)$, known as the Vietoris-Rips complex. Here a $k$-face is a simplex generated by $k+1$ vertices forming a clique. Define for all $ \alpha \in [0, \infty)$ the score function
$$
\xi^{(k,r,\alpha)}(x, \X) := \frac{1} {k+1} \sum_{ F \in {\cal F}_k (G(\X\cup\{x\} , r)); x \in F} \Vol_k(F)^{\alpha}.
$$
Note that $ \sum_{x \in \P_{sg}} \xi^{(k,r,\alpha)}(x, \P_{sg})$ is the sum of the $\alpha$th powers of the $k$-dimensional volume content of the $k$-faces. For $k_1,\hdots,k_m\in\{1,\hdots,d\}$ and $\alpha_1,\hdots,\alpha_m\in[0,\infty)$ we put
$\mu_s^{(i,k_i,r_s,\alpha_i)} : = s^{\alpha_i k_i/d}\sum_{x \in \P_{sg} \cap A_i} \xi^{(k_i,r_s,\alpha_i)}(x, \P_{sg})\delta_x$, $i\in\{1,\hdots,m\}$.  When $\sup_{s\geq 1} s r_s^d < \infty$, the scores $(s^{\alpha_1 k_1/d} \xi^{(k_1,r_s,\alpha_1)})_{s\geq 1},\hdots,(s^{\alpha_m k_m/d} \xi^{(k_m,r_s,\alpha_m)})_{s\geq 1}$ are intrinsically exponentially stabilizing and satisfy moment conditions of all orders. Thus the measures $({\mu}^{(1, k_1, r_s, \alpha_1)}_s)_{s\geq 1},\hdots,({\mu}^{(m,k_m,r_s, \alpha_m)}_s)_{s\geq 1}$ satisfy the conclusions of Theorem \ref{mainthmcentered}.
If $r_s = \varrho s^{-1/d}, \varrho \in (0, \infty)$, then the scores are scaled and the measures $({\mu}^{(1,k_1, r_s, \alpha_1)}_s)_{s\geq 1}$,$\hdots$, $({\mu}^{(m,k_m,r_s, \alpha_m)}_s)_{s\geq 1}$ satisfy the conclusions of Theorems \ref{mainthm} and \ref{mainthmcentered}. This adds to work of \cite{AR}, which considers rates of convergence with respect to $\d_3$.

\subsection{Index $k$ critical points}

Let $\X \subset \R^d$ be a finite point set and $\Y \subseteq \X$  a set of $k + 1$ points, with $k \in \{1,\hdots,d\}$ fixed.  We say that $\Y$  is in general position if the points of $\Y$  do not lie in a $(k-1)$-dimensional affine space.  Let $c_\Y:= C(\Y)$ and $r_\Y:= R(\Y)$ respectively denote the center and radius of the unique $(k-1)$-dimensional sphere containing $\Y$.  Denote by $B^d(c_\Y, r_\Y)^o$
  the {\em open} Euclidean ball with center $c_\Y$ and radius $r_\Y$ and denote by ${\rm{conv}}(\Y)$ the convex hull of $\Y$.
Following \cite[Lemma 2.2]{BM}, say that a subset $\Y \subset \X$ of $k + 1$ points in general position generates an {\em index $k$ critical point} of $\X$ iff
(i) $c_\Y \in {\rm{int(conv}}(\Y))$ and (ii) $\X \cap B^d(c_\Y, r_\Y)^o = \emptyset$. If conditions (i) and (ii) hold, then the critical point is $c_\Y$.  Let $N_k(\X)$ be the number of index $k$ critical points of $\X$.

Recall that the Boolean model with parameter $r>0$ is  $\bigcup_{x \in \X } B^d(x,r)$, which is also called the germ-grain model with $\X$ the set of germs and  $B^d(x,r)$, $x\in\X$, the set of grains.
The set of {\em local} critical points of index $k$ is the intersection of all critical points of index $k$ with $\bigcup_{x \in \X } B^d(x,r)$;
denote by  $N_{k,r}(\X)$ the number of such points. In the following, the radius $r$ will be chosen as a function of the intensity of the underlying Poisson process.  With this in mind, we let $(r_s)_{s \geq 1}$ be a family of positive scalars.

Critical points have received a lot of attention insofar as they give information about the Euler characteristic of topological spaces via Morse theory.  The paper \cite{BM} uses critical points to study the homology of the union of $d$-dimensional balls of radius $r_s$ around a Poisson point sample having intensity $s$ (the Poisson-Boolean model). The main results of \cite[Section 4]{BM} develop the limit theory for $N_{k,r_s}(\P_{sg})$ for values of $r_s$ in the sub-critical, critical, and super-critical regimes.  Central limit theorems are given, but without rates of convergence, even in the univariate setting. Here we establish rates of multivariate normal convergence for a vector with entries consisting of the numbers of either local or non-local index $k$ critical points of $\P_{sg}$.  As a simple consequence we obtain the asymptotic normality of the number of non-local $k$ critical points in the univariate case,
which is apparently new.   To deduce these results from our general theorems, we 
proceed as follows.

Given $r \in (0, \infty]$ and $\Y \subseteq \X$  a set of cardinality $k + 1$, define
$$
h_r(\Y, \X):= {\bf 1} \{ c_\Y \in {\rm{int(conv}}(\Y)), \ \X \cap B^d(c_\Y, r_\Y)^o = \emptyset, \ r_\Y \in (0,r] \}.
$$
Thus $h_r(\Y, \X) = 1$ if and only if $\Y$ generates a local index $k$ critical point (when $r = \infty$ it is not a local critical point).

Define for all  $k \in \N$ and $r \in (0, \infty]$ the scores
$$
\xi^{(k,r)}(x, \X) := \frac{1} {k + 1} \sum_{ \X_0 \subseteq \X, \ {\text{card}}(\X_0) = k, \ x\notin\X_0}  h_r( \X_0 \cup \{x\}, \X).
$$
Thus $N_{k,r_s}(\X) = \sum_{x \in \X} \xi^{(k,r_s)}(x, \X)$ (compare with first display on p.\ 670 of \cite{BM}).

We fix $k_1,\hdots,k_m\in \{1,\hdots,d\}$.
We are interested in the number $N_{k_i,r_s}(\P_{sg})$ of index $k_i$ critical points for the Poisson-Boolean model $\bigcup_{x \in \P_{sg} } B^d(x,r_s)$, as well as the random measures
$$
\mu_s^{(i,k_i,r_s)} := \sum_{x \in \P_{sg} \cap A_i} \xi^{(k_i,r_s)} (x, \P_{sg}) \delta_x \quad \text{and} \quad
\mu_s^{(i,k_i, \infty)} := \sum_{x \in \P_{sg} \cap A_i} \xi^{(k_i, \infty)} (x, \P_{sg}) \delta_x
$$
for $i\in\{1,\hdots,m\}$. Note that $\langle \mu^{(i,k_i,r_s)}, f_i\rangle=N_{k_i,r_s}(\P_{sg})$ for $A_i=W$ and $f_i\equiv 1$.

\begin{theo} (a) The measures $(\mu^{(1,k_1, \infty)}_s)_{s\geq 1},\hdots,(\mu^{(m,k_m, \infty)}_s)_{s\geq 1}$ satisfy the conclusions of Theorems \ref{mainthm} and \ref{mainthmcentered}.
\vskip.1cm
\noindent (b) When $\sup_{s \geq 1} sr_s^d<\infty$, the measures $(\mu^{(1,k_1, r_s)}_s)_{s\geq 1},\hdots,(\mu^{(m,k_m,r_s)}_s)_{s\geq 1}$ satisfy the conclusions of Theorem \ref{mainthmcentered}.
\vskip.1cm
\noindent (c) When $r_s = \varrho s^{-1/d}, \varrho \in (0, \infty)$, the measures $(\mu^{(1,k_1, r_s)}_s)_{s\geq 1},\hdots,(\mu^{(m,k_m,r_s)}_s)_{s\geq 1}$  satisfy the conclusions of Theorems \ref{mainthm} and \ref{mainthmcentered}.
\end{theo}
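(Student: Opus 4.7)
The plan is to verify for each of (a), (b), (c) the hypotheses of Theorems \ref{mainthm} and \ref{mainthmcentered}: translation invariance (which yields \eqref{translatebd}), the scaled property \eqref{xis} when required, intrinsic exponential stabilization \eqref{stab}--\eqref{stabStationary} with a monotone radius fulfilling \eqref{eqn:Rsinside}, and the moment conditions \eqref{mom}--\eqref{momStationary}. Translation invariance of all scores $\xi^{(k,r)}$ and $\xi^{(k,\infty)}$ is immediate, since circumcenters and circumradii are translation equivariant and the conditions $c_\Y \in \operatorname{int}(\operatorname{conv}(\Y))$ and $\X \cap B^d(c_\Y, r_\Y)^o = \emptyset$ are translation invariant. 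For the scaling property in part (c) one checks directly from the definition that $\xi^{(k,\varrho s^{-1/d})}(x, \X) = \xi^{(k, \varrho)}(x, x + s^{1/d}(\X - x))$, and in part (a) the indicator $h_\infty$ is fully scale invariant, so with $\xi^{(i)} := \xi^{(k_i, \infty)}$ the relation \eqref{xis} holds trivially.

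For parts (b) and (c), I would take the \emph{deterministic} radius $R_s(x, \X) := 2 r_s$. If $\Y \ni x$ with $|\Y| = k_i + 1$ satisfies $h_{r_s}(\Y, \X) = 1$, then $r_\Y \leq r_s$ and, because $\|y - x\| = \|y - c_\Y + c_\Y - x\| \leq 2 r_\Y$ for every $y \in \Y$, one has $\Y \subseteq B^d(x, 2 r_s)$ and $B^d(c_\Y, r_\Y)^o \subseteq B^d(x, 2r_s)$; hence $\xi^{(k_i, r_s)}(x, \X)$ depends only on $\X \cap B^d(x, R_s)$. This $R_s$ is trivially monotone and satisfies \eqref{eqn:Rsinside}, and under $\sup_{s \geq 1} s r_s^d < \infty$ one has $\PP(R_s \geq r) = 0$ for $r > 2 r_s$ while $\PP(R_s \geq r) \leq 1 \leq \exp(\sup_s 2^d s r_s^d) \exp(-s r^d)$ for $r \leq 2r_s$, giving \eqref{stab} and \eqref{stabStationary}. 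The moment bounds follow since $\xi^{(k_i,r_s)}(x,\X) \leq \binom{|\X \cap B^d(x, R_s)|}{k_i}$, and the right hand side has all moments by Poisson concentration.

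For part (a) the harder task is to construct a stabilization radius $R_s$ for $\xi^{(k_i, \infty)}$ that decays exponentially. The natural definition is
\[
R_s(x, \X) := 2 \max\Bigl\{ r_\Y : \Y \subseteq \X \cup \{x\}, \ |\Y| = k_{\max} + 1, \ x \in \Y, \ h_\infty(\Y, \X \cup \{x\}) = 1 \Bigr\}
\]
(with $k_{\max} := \max\{k_1, \hdots, k_m\}$, and $R_s := 0$ if no such $\Y$ exists). As above, any witnessing $\Y$ is contained in $B^d(x, R_s)$, and the emptiness ball $B^d(c_\Y, r_\Y)^o \subseteq B^d(x, R_s)$, so the score is locally determined. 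Monotonicity is built in, since removing points from $\X$ can only create new witnessing $\Y$'s (the emptiness condition becomes easier), and \eqref{eqn:Rsinside} holds because $R_s(x, \X) \leq r$ is exactly the statement that no witnessing $\Y$ of diameter exceeding $r/2$ exists inside $B^d(x,r)$. The crucial step is the exponential bound: $\{ R_s \geq r \}$ implies the existence of an open ball of radius $\geq r/2$ whose boundary passes through $x$ and which is empty of points of $\P_{sg}$. Partitioning directions around $x$ into finitely many cones and noting that at least one such cone has a large empty region gives $\PP(R_s(x, \P_{sg}) \geq r) \leq C \exp(-c s r^d)$, exactly as in the proof of Lemma 4.3 and the stabilization estimates of Section~4 in \cite{BM}; the same argument applies to $\P_{sg(y)}$, yielding \eqref{stabStationary}. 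The moment conditions \eqref{mom}, \eqref{momStationary} then follow as in parts (b), (c).

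The main obstacle is this last exponential bound for the $r = \infty$ case, since the radius is now random and governed by a global geometric condition. Once the blocking/cone argument is in place, all other hypotheses of Theorems \ref{mainthm} and \ref{mainthmcentered} are routine, and the conclusions of each part follow by direct application of these theorems.
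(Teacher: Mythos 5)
Parts (b) and (c) are exactly right and match the paper's argument (deterministic radius $2r_s$, bounded Poisson counts for the moments, scaling observation for (c)). For part (a), however, the explicit radius of stabilization you propose is not monotone: $R_s(x,\X) = 2\max r_\Y$ over witnessing sets $\Y\subseteq\X\cup\{x\}$ can \emph{increase} when you add points that complete a critical configuration. For instance, with $k_{\max}=d$, take $\M_1$ to be $d-1$ points and $\M_2=\M_1\cup\{y_d\}$ chosen so that $\M_2\cup\{x\}$ is a witnessing set; then $R_s(x,\M_1)=0 < R_s(x,\M_2)$ despite $\M_1\subseteq\M_2$, violating the monotonicity required for intrinsic exponential stabilization and for \eqref{eqn:Rsinside}. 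Your informal argument that ``removing points makes the emptiness condition easier, hence creates new witnesses'' ignores that removing points also shrinks the pool of admissible $\Y\subseteq\M\cup\{x\}$, and the two effects can pull in opposite directions. A second defect of the max-circumradius radius is that \eqref{rstab} must hold after adding up to nine extra points $\wA$, and a witnessing set for $\M\cup\wA$ can involve a point of $\wA$ far outside $B^d(x,R_s(x,\M))$, so the score on $\M\cup\wA$ need not match the score on $(\M\cup\wA)\cap\wB^d(x,R_s(x,\M))$.

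Both problems disappear if one instead \emph{defines} $R_s$ as the cone-based radius of Subsection~6.3 of \cite{Pe07} (the construction the paper cites): a fixed multiple of the maximum, over a finite cone partition around $x$, of the distance to the nearest point of $\M$ in each cone. That radius is monotone, is intrinsic, and bounds from above twice the circumradius of every witnessing $\Y$ even after adding extra points, because a large ball with $x$ on its boundary and empty of $\M\cup\wA$ is \emph{a fortiori} empty of $\M$ and would force some cone to be vacant out to a proportional distance. The cone-partition probability estimate you invoke for exponential decay is precisely the right tool; it should be the definition of $R_s$, not merely a device to bound the tail of your circumradius maximum. Finally, the moment conditions for part (a) do not follow ``as in parts (b), (c)'': in (a) the radius is random and unbounded, and the paper supplies the extra layer decomposition $\xi^{(k,\infty)}(x,\P_{sg}) \leq \frac{1}{k+1}\sum_{m\geq 0}\binom{N_s(x,(m+1)s^{-1/d})}{k}{\bf 1}\{R_s \geq m s^{-1/d}\}$ combined with Cauchy--Schwarz, which your sketch would need to add.
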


\begin{proof}
(a) Note that the scores $(\xi^{(k_1, \infty)})_{s \geq 1}, \hdots, (\xi^{(k_m, \infty)})_{s \geq 1}$ are scaled since, for $i\in\{1,\hdots,m\}$, $\xi^{(k_i, \infty)}(x, \X) =  \xi^{(k_i, \infty)}(x, x + s^{1/d}(\X - x))$. The arguments of Subsection 6.3 of \cite{Pe07} yield that $(\xi^{(k_1, \infty)})_{s\geq 1},\hdots,(\xi^{(k_m, \infty)})_{s\geq 1}$ are intrinsically exponentially stabilizing.
The scores $(\xi^{(k_1, \infty)})_{s\geq 1},\hdots,(\xi^{(k_m, \infty)})_{s\geq 1}$ also satisfy the moment conditions \eqref{mom} and \eqref{momStationary}.
Indeed, for $x \in \R^d$, $k\in\{1,\hdots,d\}$, and $\X_0\subseteq\P_{sg}$ with $|\X_0|=k$, $h_{\infty}( \X_0 \cup \{x \}, \P_{sg} )$ vanishes whenever $\X_0 \nsubseteq B^d(x, R_s)$, where $R_s$ is the radius of stabilization for $\xi^{(k, \infty)}(x, \P_{sg})$. For all $u > 0$ let $N_s(x, u) := {\text{card}}  ( B^d(x, u) \cap \P_{sg})$.
Now write
\begin{align*}
\xi^{(k, \infty)}(x, \P_{sg}) & \leq \frac{1} {k + 1} \binom{ N_s(x, R_s)} {k}  \\
& \leq \frac{1} {k + 1} \sum_{m = 0}^{\infty} \binom{ N_s(x, R_s)} {k} {\bf{1}} \{ m s^{-1/d} \leq R_s \leq (m + 1) s^{-1/d} \}  \\
&  \leq \frac{1} {k + 1} \sum_{m = 0}^{\infty} \binom{ N_s(x, (m + 1)s^{-1/d})} {k} {\bf{1}} \{ R_s \geq m s^{-1/d} \}.
\end{align*}
The moments for $\binom{ N_s(x, (m + 1)s^{-1/d})} {k}$ grow polynomially with $m$ whereas the probability of the event $\{R_s \geq m s^{-1/d}\}$ decays exponentially with $m$. These facts and the Cauchy-Schwarz inequality show that all moments of $\xi^{(k, \infty)}(x, \P_{sg})$ are uniformly bounded for $x\in W$ and $s\geq 1$. By arguing analogously in case that a deterministic set $\A$ is added to $\mathcal{P}_{sg}$, we see that
the scores $(\xi^{(k_1, \infty)})_{s\geq 1},\hdots,(\xi^{(k_m, \infty)})_{s\geq 1}$ satisfy  \eqref{mom}. Similarly they satisfy \eqref{momStationary}. The results follow from Theorems \ref{mainthm} and \ref{mainthmcentered}.

\noindent (b) The scores $(\xi^{(k_1,r_s)})_{s\geq 1},\hdots, (\xi^{(k_m,r_s)})_{s\geq 1}$ are intrinsically exponentially stabilizing, with radius of stabilization $R_s$ equal to the non-random quantity $2 r_s$. As in the proof of part (a), they also satisfy  \eqref{mom} and \eqref{momStationary}. The result follows from Theorem \ref{mainthmcentered}.

\noindent (c) Since $\xi^{(k_i, \varrho s^{-1/d} )}(x, \X)  = \xi^{(k_i, \varrho)}(x, x + s^{1/d}(\X - x))$ for $i\in\{1,\hdots,m\}$ and $s \geq 1$,
$(\xi^{(k_1, \varrho s^{-1/d} )})_{s \geq 1}, \hdots, (\xi^{(k_1, \varrho s^{-1/d} )})_{s \geq 1}$ is a family of scaled scores. Now follow the proof of part (b) and apply Theorems \ref{mainthm} and \ref{mainthmcentered}.
\end{proof}

\section{Multivariate normal approximation of stabilizing Poisson functionals in metric spaces} \label{approximation}

In this section we establish a multivariate version of the normal approximation results of \cite{LSY} in the case of Poisson input. The underlying framework is more general than what we need for the proofs of our main results. Let $(\XX,\mathcal{F})$ be a measurable space with a $\sigma$-finite measure $\lambda$ and a measurable semi-metric $\dist: \XX\times\XX\to\R$. Throughout this section let $B(x,r):=\{y\in\XX: \dist(x,y)\leq r\}$ for $x\in\XX$ and $r>0$. We assume that the measure $\lambda$ and the semi-metric $\dist$ satisfy the relation
\begin{equation}\label{eqn:AssumptionSpace}
\limsup_{\varepsilon\to 0}\frac{\lambda(B(x,r+\varepsilon))-\lambda(B(x,r))}{\varepsilon}\leq \kappa \gamma r^{\gamma-1}, \quad r\geq 0, x\in\XX,
\end{equation}
with some constants $\gamma,\kappa>0$. In the case that $\XX=\R^d$ and that $\lambda$ has a bounded density $g$ with respect to the Lebesgue measure the assumption \eqref{eqn:AssumptionSpace} is satisfied with $\gamma:=d$ and $\kappa:=\kappa_d \|g\|_\infty$, where $\kappa_d$ is the volume of the $d$-dimensional unit ball in $\R^d$. Note that \eqref{eqn:AssumptionSpace} implies that $\lambda$ is diffuse, i.e., $\lambda(\{x\})=0$ for all $x\in\XX$.

To deal with marked Poisson processes we again consider the mark space $(\MM,\mathcal{F}_\MM,\Q_\MM)$ introduced in Section \ref{subsec:Notation}. Define $\widehat{\XX}:=\XX\times\MM$, let $\widehat{\mathcal{F}}$ be the product $\sigma$-field of $\mathcal{F}$ and $\mathcal{F}_\MM$, and let $\widehat{\lambda}$ be the product measure of $\lambda$ and $\mathbb{Q}_\MM$. For a point $\wx\in\wXX$ we often use the representation $\wx=(x,m_x)$ with $x\in\XX$ and $m_x\in\MM$. In the following we denote by $\P_s$, $s\geq 1$, a Poisson process with intensity measure $s\widehat{\la}$, i.e., $\P_s$ is a random element in $\mathbf{N}$, the set of all simple locally finite point configurations on $\widehat{\XX}$ (equipped with the smallest $\sigma$-field such that all maps $m_A: \nu\mapsto\nu(A)$, $A\in\widehat{\mathcal{F}}$, are measurable).

We are interested in the asymptotic behavior as $s \to \infty$ of the random variables $\HH_s^{(1)}, \hdots ,\HH_s^{(m)}$, $m\in\N$, with
$$
H_s^{(i)}:=\sum_{\wx \in\P_s} \xi_s^{(i)}(\wx,\P_s) \quad \text{and} \quad \HH_s^{(i)}:= H_s^{(i)} - \E H_s^{(i)}
$$
for $i\in\{1,\hdots,m\}$, where the measurable score functions $\xi_s^{(i)}:\widehat{\XX}\times\mathbf{N}\to\R$, $s\geq 1$, $i\in\{1,\hdots,m\}$, provide the local contributions of points $\wx$ of $\P_s$ to the global statistic $H_s^{(i)}$. As in \eqref{xishort} we assume for all $\wx\in \wXX$ and $\M\in\mathbf{N}$ with $\wx\notin\M$,
$$
\xi_s^{(i)}(\wx,\M)=\xi_s^{(i)}(\wx,\M\cup \{ \wx \}), \quad i\in\{1,\hdots,m\}, \quad s\geq 1.
$$

To study the asymptotic behavior of $\HH_s^{(1)}, \hdots,\HH_s^{(m)}$, we introduce some properties for the score functions, which generalize those given in Subsection \ref{subsec:Notation} for the Euclidean case.

For $s\geq 1$ we call a measurable map $R_s: \widehat{\XX}\times\mathbf{N}\to\R$ a radius of stabilization of $\xi_s^{(1)},\hdots,\xi_s^{(m)}$
if for all $i\in\{1,\hdots,m\}$, $(x,m_x) \in \widehat{\XX}$, $\M\in\mathbf{N}$, and $\wA \subset \wXX$ with $|\wA |\leq 9$ we have
\begin{equation} \label{eqn:RadiusStabilizationGeneral}
\xi_s^{(i)}((x, m_x),({\M}\cup \wA)\cap \widehat{B}(x,R_s((x, m_x),\M)))=\xi_s^{(i)}((x, m_x),\M \cup \wA),
\end{equation}
where $\widehat{B}(y,r):=B(y,r)\times\MM$ for $y\in\XX$ and $r>0$.

For $x\in\XX$ let $M_x$ be a random mark distributed according to $\Q_\MM$, which is independent from everything else. Similarly, for a finite set $\A\subset\XX$ we denote by $(\A,M_\A)$ the point configuration we obtain if we equip each of the points of $\A$ with a random mark distributed according to $\Q_\MM$ and independent from everything else. We say that the scores $(\xi_s^{(1)})_{s\geq 1},\hdots,(\xi_s^{(m)})_{s\geq 1}$ are exponentially stabilizing if there exist radii of stabilization $(R_s)_{s\geq 1}$ and constants $C_{stab},c_{stab},\alpha_{stab}\in(0,\infty)$ such that, for $x\in\XX$, $r\geq 0$, and $s\geq 1$,
\begin{equation}\label{eqn:StabilizationX}
\PP(R_s((x,M_x), \P_s)\geq r) \leq C_{stab} \exp(-c_{stab} (s^{1/\gamma}r)^{\alpha_{stab}}).
\end{equation}

The scores $(\xi_s^{(1)})_{s\geq1},\hdots,(\xi_s^{(m)})_{s\geq1}$ satisfy a $(6+p)$th-moment condition with $p > 0$ if there is a constant $C_{mom,p}\in(0,\infty)$ such that for all $i\in\{1,\hdots,m\}$ and $\A\subset \XX$ with $|\A|\leq 9$,
\begin{equation}\label{eqn:MomentsX}
\sup_{s\in[1,\infty)}\sup_{x\in\XX} \E |\xi_s^{(i)}((x,M_x), \P_s\cup(\A,M_{\A}))|^{6 +p}\leq C_{mom,p}.
\end{equation}

Let $K$ be a measurable subset of $\XX$ such that $\XX\ni x\mapsto\dist(x,K):=\inf_{y\in K}\dist(x,y)$ is measurable. Now the scores $(\xi_s^{(1)})_{s\geq 1}, \hdots, (\xi_s^{(m)})_{s\geq 1}$ decay exponentially fast with the distance to $K$ if there exist constants $C_K,c_K,\alpha_K\in(0,\infty)$ such that for all $i\in\{1,\hdots,m\}$, $x\in \XX$, $\A\subset\XX$ with $|\A|\leq 9$, and $s\geq 1$,
\begin{equation}\label{eqn:DecayK}
\PP(\xi_s^{(i)}((x,M_x), \P_s\cup (\A,M_{\A}))\neq 0) \leq C_K \exp(-c_K s^{\alpha_K/\gamma} \dist(x,K)^{\alpha_K}).
\end{equation}
For the choice $K:=\XX\setminus\{ x\in\XX : \xi_s^{(i)}((x,M_x),\P_s)=0 \ \mathbb{P}\text{-a.s.}, i\in\{1,\hdots,m\}\}$, condition \eqref{eqn:DecayK} is always satisfied with $C_K=1$ and arbitrary $c_K,\alpha_K\in(0,\infty)$. However to obtain a central limit theorem with the following result, the set $K$ must be sufficiently small so that it must sometimes be chosen more carefully. For more details on the choice on $K$ as well as examples we refer to \cite{LSY}. Recall that  $||\Theta||_{op}$ stand for the operator norm of a matrix $\Theta$ and that $N_\Theta$ is a centered Gaussian random vector with covariance matrix $\Theta$. The following theorem provides bounds for the multivariate normal approximation of Poisson functionals comprised of sums of stabilizing scores.

\begin{theo}\label{thm:MainX}
Assume that the scores $(\xi_s^{(1)})_{s\geq 1},\hdots,(\xi_s^{(m)})_{s\geq 1}$, $m\in\N$, satisfy the assumptions \eqref{eqn:StabilizationX}, \eqref{eqn:MomentsX}, and \eqref{eqn:DecayK} and let $\tau>0$. Define $\alpha:=\min\{\alpha_{stab},\alpha_K\}$ and
\be \label{IKS}
I_{K,s}:=s\int_\XX \exp\bigg( -\frac{\min\{c_{stab},c_K\} \min\{p,1\} s^{\alpha/\gamma} \dist(x,K)^{\alpha}}{ 39 \cdot 4^{\alpha+1} }\bigg) \, \lambda(\dint x), \quad s\geq 1.
\ee
\begin{itemize}
\item [(a)] There exists a constant $C_1\in(0,\infty)$ such that for positive semi-definite $\Theta=(\theta_{ij})_{i,j=1,\hdots,m}\in\R^{m\times m}$ and $s\geq 1$,
\begin{align*}
& \d_3(s^{-\tau} ( \HH_s^{(1)} , \hdots, \HH_s^{(m)} ) , N_{\Theta})\\
& \leq  \frac{m}{2} \sum_{i,j=1}^m \bigg|\theta_{ij}-\frac{\Cov(H_s^{(i)},H_s^{(j)})}{s^{2\tau}}\bigg| + C_1 (m^2 s^{-2\tau} \sqrt{I_{K,s}} + m^3 s^{-3\tau} I_{K,s}  ).
\end{align*}

\item [(b)] There exists a constant $C_2\in(0,\infty)$ such that for positive definite $\Theta=(\theta_{ij})_{i,j=1,\hdots,m}\in\R^{m\times m}$ and $s\geq 1$,
\begin{align*}
& \d_2(s^{-\tau} (\HH_s^{(1)} , \hdots, \HH_s^{(m)}), N_{\Theta})\\
& \leq  \|\Theta^{-1}\|_{op} \|\Theta\|_{op}^{1/2} \sum_{i,j=1}^m \bigg|\theta_{ij}-\frac{\Cov(H_s^{(i)},H_s^{(j)})}{s^{2\tau}}\bigg|\\
& \quad + C_2 (m \|\Theta^{-1}\|_{op} \|\Theta\|_{op}^{1/2} s^{-2\tau} \sqrt{I_{K,s}} + m^3 \|\Theta^{-1}\|_{op}^{3/2} \|\Theta\|_{op} s^{-3\tau} I_{K,s}  ).
\end{align*}

\item [(c)] There exists a constant $C_3 \in(0,\infty)$ such that for positive definite
 $\Theta=(\theta_{ij})_{i,j=1,\hdots,m}\in\R^{m\times m}$ and $s \geq 1$,
\begin{align*}
& \d_{convex}(s^{-\tau} ( \HH_s^{(1)} , \hdots, \HH_s^{(m)} ), N_{\Theta}) \\
& \leq C_3 m^{13/2} \max\{\|\Theta^{-1}\|_{op}^{1/2},  \|\Theta^{-1}\|_{op}^{3/2} \} \\
& \quad \times \max \bigg\{  \sum_{i,j=1}^m \bigg| \theta_{ij} - \frac{ \Cov(H_s^{(i)},H_s^{(j)}) }
{  s^{2\tau} } \bigg|, s^{-\tau} \max\big\{s^{-2\tau} I_{K,s}, (s^{-2\tau} I_{K,s})^{1/4}\big\}  \bigg\}.
\end{align*}
\end{itemize}
The constants $C_1,C_2,C_3$ only depend on the constants in \eqref{eqn:AssumptionSpace}, \eqref{eqn:StabilizationX}, \eqref{eqn:MomentsX}, and \eqref{eqn:DecayK}.
\end{theo}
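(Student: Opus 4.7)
The plan is to invoke the multivariate second order Poincaré inequalities for Poisson functionals established in \cite{SY2} and to evaluate their right-hand sides on the specific vector $(\HH_s^{(1)},\hdots,\HH_s^{(m)})$ by exploiting stabilization. The results of \cite{SY2} bound $\d_3$, $\d_2$, and $\d_{convex}$ between an $m$-vector of Poisson functionals and $N_\Theta$ by a sum of two kinds of contributions: first, the covariance discrepancy $\sum_{i,j=1}^m |\theta_{ij} - s^{-2\tau}\Cov(H_s^{(i)},H_s^{(j)})|$ with a prefactor depending on $\Theta$ and the choice of distance; second, integrals over $\wXX$ and $\wXX^2$ of quantities built from moments of the first order add-one cost $D_{\wx}\HH_s^{(i)} = H_s^{(i)}(\P_s\cup\{\wx\}) - H_s^{(i)}(\P_s)$ and of the second order operator $D_{\wx,\wy}^2 \HH_s^{(i)}$, together with tail probability estimates of these operators in the $\d_{convex}$ case. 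These inequalities are the direct multivariate counterparts of the ones used in \cite{LSY}, so carrying them over requires no new ideas at this step.

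The second step is to bound the moment and probability integrands. Adding $\wx=(x,m_x)$ to $\P_s$ changes $H_s^{(i)}$ by the value $\xi_s^{(i)}(\wx,\P_s\cup\{\wx\})$ plus the sum of changes in $\xi_s^{(i)}(\wy,\P_s\cup\{\wx\})$ over existing points $\wy=(y,m_y)\in\P_s$ whose stabilization ball $\widehat{B}(y,R_s(\wy,\cdot))$ contains $x$. By \eqref{eqn:RadiusStabilizationGeneral} this restricts the relevant points to those within stabilization range, while \eqref{eqn:StabilizationX} ensures that the point count of that range is stochastically controlled, and \eqref{eqn:MomentsX} bounds each summand in $L^{6+p}$. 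Hölder's inequality then yields the required moment bounds for $D_{\wx}\HH_s^{(i)}$. Crucially, $D_{\wx}\HH_s^{(i)}$ vanishes unless at least one nonzero score involves $\wx$, so \eqref{eqn:DecayK} forces an exponential decay factor of the form $\exp(-c s^{\alpha/\gamma}\dist(x,K)^\alpha)$ on the $L^q$-norm. An analogous but more delicate analysis on the double difference shows that $D_{\wx,\wy}^2\HH_s^{(i)}$ vanishes off the joint event that the stabilization balls at $\wx$ and $\wy$ overlap, coupling the decay in $\dist(x,K)$ and $\dist(y,K)$ and reducing the double integral to a quantity comparable to $I_{K,s}$. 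Inserting \eqref{eqn:AssumptionSpace} to control the volume growth of balls and integrating against $\widehat{\la}$ then yields the $\sqrt{I_{K,s}}$ and $I_{K,s}$ terms, with the factors $s^{-2\tau}$ and $s^{-3\tau}$ coming from the scaling of the Poincaré inequalities.

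The main obstacle will be the $\d_{convex}$ bound. The corresponding inequality in \cite{SY2} is substantially more intricate than the smooth test function ones: it mixes moments of the difference operators with probabilities of the events $\{|D_{\wx}\HH_s^{(i)}|>t\}$ and $\{|D_{\wx,\wy}^2\HH_s^{(i)}|>t\}$, and it contains terms which scale as a fourth root of a stabilization integral rather than a square root, which is the origin of the $\max\{s^{-2\tau}I_{K,s},(s^{-2\tau}I_{K,s})^{1/4}\}$ factor. It also carries a polynomial dependence in $m$ that ultimately produces the constant $m^{13/2}$. To estimate the probability terms I will combine Markov's inequality applied at moderate exponents with the $(6+p)$-th moment bounds and the exponential tail of $R_s$, at the cost of losing some exponents on the decay; the fact that $\min\{p,1\}$ appears in \eqref{IKS} reflects precisely this accounting. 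Collecting everything and tracking the dependence on $\Theta$ through $\|\Theta^{-1}\|_{op}$ and $\|\Theta\|_{op}$ as dictated by \cite{SY2} produces the three bounds stated in parts (a), (b), and (c).
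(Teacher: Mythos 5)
Your proposal is correct and follows essentially the same route as the paper: invoke the multivariate second-order Poincar\'e bounds of \cite{SY2} (stated there as Theorem 4.5, reproduced here as Theorem \ref{thm:generalStabilization}), then verify the moment hypotheses \eqref{eqn:AssumptionMomentBoundsD}--\eqref{eqn:AssumptionMomentBoundsD2} and evaluate the integrals $\Gamma_1,\hdots,\Gamma_5$ by the stabilization/decay machinery of \cite{LSY} (Lemmas 5.5 and 5.10), with the exponent bookkeeping that introduces $\min\{p,1\}$ into $I_{K,s}$. The only slight inaccuracy in your description is that the probability terms in the \cite{SY2} bounds are of the form $\PP(D_{\wx}H_s^{(i)}\neq 0)$ and $\PP(D^2_{\wx,\wy}H_s^{(i)}\neq 0)$ rather than thresholded tails $\PP(|D\cdot|>t)$, so they are controlled directly from \eqref{eqn:StabilizationX} and \eqref{eqn:DecayK} without any Markov step; the moment bounds enter separately through the prefactor $a^{\cdot}$. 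This does not affect the viability of the plan.
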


\noindent{\em Remarks.}
(i) To establish a multivariate central limit theorem with Theorem \ref{thm:MainX}, one has to choose $\Theta$ and $\tau$ such that
$$
\lim_{s\to\infty} \frac{\Cov(H_s^{(i)},H_s^{(j)})}{s^{2\tau}}=\theta_{ij}
$$
for all $i,j\in\{1,\hdots,m\}$. Theorem \ref{thm:MainX} can be seen as a multivariate version of Theorem 2.1 in \cite{LSY}. In contrast to the univariate case, where one rescales by the square root of the variance, here one needs to control, additionally, the convergence of the covariances to the limiting covariances. In Section \ref{covariance} we will do this, under some additional assumptions on the scores, which is an important ingredient for the proof of Theorem \ref{mainthm}. Then we shall deduce our main results presented in Subsection \ref{subsec:MainResults} from Theorem \ref{thm:MainX}, putting $\wXX = \widehat{W}$, $\la = s\Q$, $\gamma = d$, $K=\bigcup_{i=1}^m A_i$, and $\tau = 1/2$.

\vskip.1cm

\noindent (ii) Due to its generality Theorem \ref{thm:MainX} can be applied to many other functionals and underlying spaces as well. Provided one could deal with the covariances on an individual basis, one might be able to deduce results in the spirit of Theorem \ref{mainthm}. By comparing $s^{-\tau} ( \HH_s^{(1)} , \hdots, \HH_s^{(m)} )$, whose covariance matrix is denoted by $\Sigma(s)$, with a Gaussian random vector $N_{\Sigma(s)}$, one can achieve a faster rate of convergence as in Theorem \ref{mainthmcentered} since the sums involving the covariances in Theorem \ref{thm:MainX} disappear. Here one only needs positive definiteness of $\Sigma(s)$ in parts (b) and (c), but not its speed of convergence.

\vskip.1cm

\noindent (iii) By comparing $s^{-\tau} ( \HH_s^{(1)} , \hdots, \HH_s^{(m)} )$ with $N_{\Sigma(s)}$,
 we extend to the multivariate  set-up
 the rates of  univariate normal convergence  for stabilizing Poisson functionals on manifolds  given in Theorem 3.3 of \cite{PY6}. 
We also  give improved rates of convergence without the extraneous logarithmic factors present in 
dependency graph arguments there. Consequently, via
Theorem \ref{thm:MainX}, the applications in Section \ref{Applic} admit
extensions to the setting of manifolds, subject to the positive definiteness of  $\Sigma(s)$.

\vskip.1cm

\noindent (iv) Further possible applications of Theorem \ref{thm:MainX} are, for example, stabilizing functionals with surface area order rescaling of the variance, such as the volume of the Poisson-Voronoi approximation and the number of maximal points of a Poisson sample, or the $k$-face functionals and intrinsic volumes of the convex hull of a homogeneous Poisson process in a convex body with $C^2$-boundary and positive Gaussian curvature. Univariate central limit theorems for the here-mentioned functionals are derived in \cite{LSY}.

\vskip.3cm

We prepare the proof of Theorem \ref{thm:MainX} by recalling some results from Section 4 of \cite{SY2}, some of which are based on quantitative bounds originating in \cite{PeccatiZheng}.  Let $\mu$ be a $\sigma$-finite measure on $\XX$ and let $\P$ be a Poisson process on $\XX\times\MM$ whose intensity measure is the product measure of $\mu$ and $\Q_{\MM}$. Here, we assume that $\XX$ and $(\MM,\Q_{\MM})$ are as before, although this particular structure is not necessary for the subsequent result. We call a random variable $F$ a Poisson functional (of $\P$) if there is a measurable map $f:\mathbf{N}\to\R$ such that $F=f(\P)$ a.s. The first two difference operators of $F$ are given by
$$
D_{\wx}F:=f(\P\cup\{\wx\}) - f(\P)
$$
for $\wx \in\wXX$ and
$$
D^2_{\wx_1,\wx_2}F:=f(\P\cup\{\wx_1,\wx_2\}) - f(\P\cup\{\wx_1\}) - f(\P\cup\{\wx_2\}) + f(\P)
$$
for $\wx_1,\wx_2\in\wXX$. We say that $F\in\operatorname{dom}D$ if $\E F^2<\infty$ and
$$
\int_{\wXX} \E (D_{\wx}F)^2 \, (\mu\otimes\Q_{\MM})(\dint \wx)<\infty.
$$
In the following, we do not consider a single Poisson functional but a vector $F:=(F_1,\hdots,F_m)$, $m\in\N$, of Poisson functionals $F_1,\hdots,F_m\in\operatorname{dom}D$ with $\E F_i=0$, $i\in\{1,\hdots,m\}$. Recall that $M_x$ stands for a random mark of $x\in\XX$ that is distributed according to $\Q_{\MM}$ and is independent from everything else. Define for all $a,q \in (0, \infty)$,
\begin{align*}
\Gamma_1(a,q) & := a^{\frac{2}{4+q}} \bigg(\sum_{i=1}^m\int_{\mathbb{X}} \bigg( \int_{\mathbb{X}} \PP(D_{(x_1,M_{x_1}), (x_2,M_{x_2})}^2 F_i\neq 0)^{\frac{q}{16+4q}} \, \mu(\dint x_2) \bigg)^2 \, \mu(\dint x_1) \bigg)^{1/2}  \allowdisplaybreaks \\
\Gamma_2(a,q) & :=  a^{\frac{3}{4+q}} \sum_{i=1}^m \int_{\mathbb{X}} \PP(D_{(x,M_x)} F_i\neq 0)^{\frac{1+q}{4+q}} \, \mu(\dint x) \allowdisplaybreaks\\
\Gamma_3(a,q) & :=  a^{\frac{2}{4+q}} \bigg( \sum_{i=1}^m 9  \int_{\mathbb{X}^2} \mathbb{P}(D^2_{(x_1,M_{x_1}),(x_2,M_{x_2})}F_i\neq 0)^{\frac{q}{8+2q}} \, \mu^2(\dint(x_1,x_2)) \\
& \hskip 3cm + \int_{\mathbb{X}} \PP(D_{(x,M_x)} F_i\neq 0)^{\frac{q}{4+q}} \, \mu(\dint x) \bigg)^{1/2} \\
\Gamma_4(a,q) & := a^{\frac{5}{3(4+q)}} \bigg(62 \int_{\mathbb{X}} \bigg( \int_{\mathbb{X}} \mathbb{P} (D^2_{(x_1,M_{x_1}),(x_2,M_{x_2})}F\neq\0)^{\frac{q-2}{24+6q}}  \, \mu(\dint x_2) \bigg)^2  \, \mu(\dint x_1) \bigg)^{1/3} \allowdisplaybreaks\\
\Gamma_5(a,q) & := a^{\frac{3}{2(4+q)}}\bigg(49 \int_{\mathbb{X}} \bigg( \int_{\mathbb{X}} \mathbb{P}(D^2_{(x_1,M_{x_1}),(x_2,M_{x_2})}F\neq\0)^{\frac{q-2}{24+6q}}  \, \mu(\dint x_2) \bigg)^2  \, \mu(\dint x_1)\bigg)^{1/4},
\end{align*}
where $D^2F=(D^2F_1,\hdots,D^2F_m)$ and $\0$ denotes the origin in $\R^m$. The following bounds for the multivariate normal approximation of Poisson functionals are taken from \cite[Theorem 4.5]{SY2}.

\begin{theo}\label{thm:generalStabilization}
Let $F=(F_1,\hdots,F_m)$, $m\in\N$, be a vector of Poisson functionals $F_1,\hdots,F_m$ $\in\operatorname{dom}D$ with $\E F_i=0$, $i\in\{1,\hdots,m\}$, and assume that there exist constants $a,q\in (0,\infty)$ such that
\begin{equation}\label{eqn:AssumptionMomentBoundsD}
\E |D_{(x,M_x)} F_i|^{4+q} \leq a, \quad \mu\text{-a.e. } x\in \mathbb{X},
\end{equation}
and
\begin{equation}\label{eqn:AssumptionMomentBoundsD2}
\E |D^2_{(x_1,M_{x_1}), (x_2,M_{x_2})} F_i|^{4+q} \leq a, \quad \mu^2\text{-a.e. } (x_1, x_2)\in \mathbb{X}^2,
\end{equation}
for all $i\in\{1,\hdots,m\}$.
\begin{itemize}
\item [(a)] For positive semi-definite $\Theta=(\theta_{ij})_{i,j\in\{1,\hdots,m\}}\in\R^{m\times m}$,
$$
\d_3(F,N_{\Theta}) \leq \frac{m}{2} \sum_{i,j=1}^m |\theta_{ij}-\Cov(F_i,F_j)| + \frac{3m^{3/2}}{2} \Gamma_1(a,q)  +  \frac{m^2}{4} \Gamma_2(a,q).
$$
\item [(b)] For positive definite $\Theta=(\theta_{ij})_{i,j\in\{1,\hdots,m\}}\in\R^{m\times m}$,
\begin{align*}
\d_2(F,N_{\Theta}) & \leq \|\Theta^{-1}\|_{op} \|\Theta\|_{op}^{1/2} \sum_{i,j=1}^m |\theta_{ij}-\Cov(F_i,F_j)| + 3\|\Theta^{-1}\|_{op} \|\Theta\|_{op}^{1/2} \sqrt{m}  \Gamma_1(a,q)\\
& \quad  + \frac{\sqrt{2\pi}}{8} \|\Theta^{-1}\|_{op}^{3/2} \|\Theta\|_{op}m^2\Gamma_2(a,q).
\end{align*}
\item[(c)] Let $\Theta=(\theta_{ij})_{i,j\in\{1,\hdots,m\}}\in\R^{m\times m}$ be positive definite and assume that $q>2$. Then,
\begin{align*}
\d_{convex}(F, N_\Theta)
 \leq & 941 m^5 \max\{\|\Theta^{-1}\|_{op}^{1/2}, \|\Theta^{-1}\|_{op}^{3/2}\} \\
& \times
 \max\bigg\{ \sum_{i,j=1}^{m} |\theta_{ij}-\Cov(F_i,F_j)|, \sqrt{m}\Gamma_1(a,q),\Gamma_2(a,q),  \\
& \hskip 2.5cm \sqrt{m}\Gamma_3(a,q), m^{5/6} \Gamma_4(a,q), m^{3/4} \Gamma_5(a,q) \bigg\}.
\end{align*}
\end{itemize}
\end{theo}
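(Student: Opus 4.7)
The proof will pass through the abstract multivariate second order Poincar\'e inequalities on the Poisson space developed in \cite{SY2}, combined with H\"older's inequality applied to the uniform moment assumptions, and requires no further metric-space structure on $\XX$. My plan has three steps.

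First, I would start from the abstract multivariate bounds for vectors of Poisson functionals in \cite{SY2}. For each test function class used in the definitions of $\d_2$, $\d_3$, and $\d_{convex}$, these bounds are obtained by combining Stein's method for the Gaussian law $N_\Theta$ with the Mehler-type integration by parts on the Poisson space. For each $h$ in the relevant class, the Stein equation $\mathcal{L}\phi_h = h - \E h(N_\Theta)$ (with $\mathcal{L}$ the Ornstein--Uhlenbeck generator associated with $N_\Theta$) is solved, with bounds on the partial derivatives of $\phi_h$ controlled by $\|\Theta^{-1}\|_{op}$ and $\|\Theta\|_{op}$ (and their half-integer powers in the $\d_{convex}$ case, after smoothing indicators of convex sets). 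The representation of $\E h(F)-\E h(N_\Theta)$ then unfolds into a constant multiple of the covariance discrepancy $\sum_{i,j}|\theta_{ij}-\Cov(F_i,F_j)|$ plus integrals over $\widehat{\XX}$ and $\widehat{\XX}^2$ of mixed products of $DF$ and $D^2F$.

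Second, the key step is to reduce these integrated mixed moments to $\Gamma_1(a,q),\ldots,\Gamma_5(a,q)$ via H\"older's inequality. The uniform bound $\E|D_{\wx}F_i|^{4+q}\leq a$ gives, for any $k\in[1,4+q]$,
\[
\E|D_{\wx}F_i|^k \leq a^{k/(4+q)}\, \PP(D_{\wx}F_i\neq 0)^{1-k/(4+q)},
\]
and analogously for $D^2F_i$ using \eqref{eqn:AssumptionMomentBoundsD2}. Applied inside each mixed-moment integrand, with Cauchy--Schwarz used across the $m$ coordinates, this converts abstract moments of $DF$ and $D^2F$ into the expressions $a^{\cdot}\,\PP(\cdot\neq 0)^{\cdot}$ integrated against $\mu$ or $\mu^{\otimes 2}$. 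The fractional probability exponents appearing in $\Gamma_1, \Gamma_2$ (namely $q/(16+4q)$ and $(1+q)/(4+q)$) and in $\Gamma_3, \Gamma_4, \Gamma_5$ are precisely those forced by the H\"older bookkeeping so that, after integration, only the $(4+q)$-th moment hypothesis is invoked. The polynomial-in-$m$ prefactors ($m, m^{3/2}, m^2$ in parts (a)--(b); $m^5$ in part (c)) come from the combinatorial expansion of the Stein expressions across the coordinates of the vector $F$, and the operator-norm factors in (b) and (c) descend from the Stein derivative bounds.

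The main obstacle is part (c), the $\d_{convex}$ estimate. Passing from smooth test functions to indicators of convex sets ordinarily introduces a logarithmic factor (as in \cite{RR} and \cite[Section 12.4]{CGS}), which must be avoided here to produce the claimed clean rate. The strategy of \cite{SY2} is a careful smoothing of the indicator of a convex set, combined with an additional higher-order Malliavin expansion that requires the stronger moment assumption $q>2$; this is what generates the extra quantities $\Gamma_3, \Gamma_4, \Gamma_5$ with the apparently awkward fractional exponents $q/(8+2q)$ and $(q-2)/(24+6q)$, as well as the large $m^5$ prefactor. By contrast, parts (a) and (b) on the smoother test classes drop out as relatively routine consequences of the abstract multivariate Poisson--Stein apparatus once the H\"older reduction of step two has been carried out.
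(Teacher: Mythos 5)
The paper does not actually prove Theorem~\ref{thm:generalStabilization}: the text immediately preceding it states that the bounds ``are taken from \cite[Theorem 4.5]{SY2}'', so the result is imported verbatim and there is no in-paper proof to compare against. Your sketch is therefore a reconstruction of the argument in \cite{SY2} rather than of anything the present paper establishes.

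As a reconstruction it is plausible and captures the right skeleton: the route does go through the multivariate second order Poincar\'e inequalities of \cite{SY2}, which combine Stein's method for $N_\Theta$ (with derivative bounds of $\phi_h$ governed by $\|\Theta^{-1}\|_{op}$ and $\|\Theta\|_{op}$) with the Malliavin/Mehler integration-by-parts apparatus on the Poisson space from \cite{LPS}, and the uniform $(4+q)$-moment assumptions \eqref{eqn:AssumptionMomentBoundsD}--\eqref{eqn:AssumptionMomentBoundsD2} are indeed funneled through H\"older's inequality to produce the $a^{\cdot}\,\PP(\cdot\neq 0)^{\cdot}$ expressions defining $\Gamma_1,\hdots,\Gamma_5$. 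You also correctly flag that the $\d_{convex}$ bound needs a smoothing-of-indicators argument and stronger moment input ($q>2$), which is why $\Gamma_3,\Gamma_4,\Gamma_5$ only appear in part (c). However, what you offer is a programme, not a proof: the exact exponents such as $q/(16+4q)$, $(1+q)/(4+q)$, $(q-2)/(24+6q)$ and the explicit prefactors $\tfrac{3m^{3/2}}{2}$, $\tfrac{m^2}{4}$, $\tfrac{\sqrt{2\pi}}{8}$, $941\,m^5$ are asserted to ``drop out'' of the H\"older bookkeeping rather than derived, and the $\d_{convex}$ smoothing step that removes the logarithmic factor is described only by reference to its goal. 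Since this theorem is used in the paper purely as a black box, the intended justification here is the citation to \cite{SY2}, not a fresh proof; if you wanted to present a self-contained argument you would have to carry out the moment/H\"older calculation line by line and pin down each constant, which your proposal does not do.
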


We are now ready to prove the main result of this section.

\begin{proof}[Proof of Theorem \ref{thm:MainX}]
Without loss of generality we may assume that $C_{stab}=C_K=:C$, $c_{stab}=c_K=:c$, $\alpha_{stab}=\alpha_K=:\alpha$, and $p\in(0,1]$. Our aim is to apply Theorem \ref{thm:generalStabilization} with $q:=2+p/2$.

It can be shown as in \cite[Lemma 5.5]{LSY} that there exists a constant $\widehat{C}_{mom} \in(0,\infty)$ such that, for $i\in\{1,\hdots,m\}$,
\begin{equation}\label{eqn:BoundD1}
\E |D_{(x,M_x)}H_s^{(i)}|^{6+p/2} \leq \widehat{C}_{mom}^{6+p/2}, \quad x\in \XX,
\end{equation}
and
\begin{equation}\label{eqn:BoundD2}
\E |D^2_{(x_1,M_{x_1}),(x_2,M_{x_2}) }H_s^{(i)}|^{6+p/2} \leq \widehat{C}_{mom}^{6+p/2}, \quad x_1,x_2\in\XX.
\end{equation}
In \cite{LSY} this is basically shown for the $(4+p/2)$th-moments. Since we assume here a $(6+p)$th-moment condition on the scores in \eqref{eqn:MomentsX} (compared to a $(4+p)$th-moment condition in \cite{LSY}) and add up to nine additional points in \eqref{eqn:RadiusStabilizationGeneral}, \eqref{eqn:MomentsX}, and \eqref{eqn:DecayK} (compared to up to seven points in \cite{LSY}), the same arguments as in \cite{LSY} can be employed here for the $(6+p/2)$th-moments of the first two difference operators.

For $u, v \geq 0$ we put
$$
I_{K,s}(u,v):=s\int_\XX \exp(-v s^{u/\gamma} \dist(x,K)^u) \, \lambda(\dint x), \quad s\geq 1.
$$
It follows from \cite[Lemma 5.10]{LSY}, where we
put $\beta = p/78$, $\beta = 2/13$, and $\beta = 2/7$, respectively, that there exist constants $\tilde{C}_1,\tilde{C}_2,\tilde{C}_3\in(0,\infty)$ such that, for $i\in\{1,\hdots,m\}$,
$$
s^3 \int_\XX \bigg( \int_\XX \PP(D^2_{(x,M_x),(y, M_y)}H_s^{(i)}\neq 0)^{p/78 } \, \lambda(\dint y) \bigg)^2  \, \lambda(\dint x) \leq \tilde{C}_1 I_{K,s}(\alpha,cp/ (39\cdot 4^{\alpha+1}) ),
$$
$$
s^2 \int_{\XX^2} \PP(D^2_{(x_1, M_{x_1}),(x_2,M_{x_2})} H_s^{(i)}\neq 0)^{2/13} \, \lambda^2(\dint (x_1,x_2)) \leq \tilde{C}_2 I_{K,s}(\alpha, c/(26\cdot 4^{\alpha})),
$$
and
$$
s \int_{\mathbb{X}} \mathbb{P}(D_{(x,M_x)}H_s^{(i)}\neq 0)^{2/7} \, \lambda(\dint x) \leq \tilde{C}_3 I_{K,s}(\alpha, c/(7\cdot 2^{\alpha})).
$$
From the first inequality and the union bound
$$
\PP(D^2_{(x,M_x),(y, M_y)}H_s\neq \0 )^{p/78}\leq \sum_{i=1}^m \PP(D^2_{(x,M_x),(y, M_y)}H_s^{(i)}\neq 0)^{p/78}, \quad x,y\in\mathbb{X},
$$
we obtain
$$
s^3 \int_\XX \bigg( \int_\XX \PP(D^2_{(x,M_x),(y, M_y)}H_s\neq\0)^{p/78} \, \lambda(\dint y) \bigg)^2  \, \lambda(\dint x) \leq \tilde{C}_1 m^2 I_{K,s}(\alpha,cp/ (39\cdot 4^{\alpha+1}) ).
$$
Now we apply Theorem \ref{thm:generalStabilization} with $F_i:=s^{-\tau}\bar{H}_s^{(i)}$, $i\in\{1,\hdots,m\}$, $\mu:=s\lambda$, $q:=2+p/2$, and $a:=\widehat{C}_{mom}^{6+p/2} s^{-\tau(6+p/2)}$. By \eqref{eqn:BoundD1} and \eqref{eqn:BoundD2} the assumptions \eqref{eqn:AssumptionMomentBoundsD} and \eqref{eqn:AssumptionMomentBoundsD2} are satisfied. For the exponents in $\Gamma_1(a,q),\hdots,\Gamma_5(a,q)$, we have the lower bounds
\be \label{lowerbd}
\min\bigg\{ \frac{q}{16+4q},\frac{q-2}{24+6q}\bigg\}\geq \frac{p}{78}, \quad \min\bigg\{\frac{1+q}{4+q}, \frac{q}{4+q} \bigg\}\geq \frac{2}{7}, \quad \text{and} \quad \frac{q}{8+2q}\geq \frac{2}{13}.
\ee
Recalling the definition of $I_{K,s}$ at \eqref{IKS} we have
$$
I_{K,s} = I_{K,s}(\alpha, \min\{c_{stab}, c_K \}  \min\{p,1\} / (39 \cdot 4^{\alpha+1}) ).
$$
By the monotonicity of $I_{K,s}(\cdot, \cdot)$ in the second argument, the terms on  the right-hand sides of the above integral bounds involving $I_{K,s}(\cdot, \cdot)$ are dominated by $I_{K,s}$.  Using \eqref{lowerbd} and
the above integral bounds, we find that the quantities $\Gamma_i(a,q)$, $i\in\{1,\hdots, 5 \}$, of Theorem \ref{thm:generalStabilization} satisfy
\begin{align*}
\Gamma_1(a,q) & \leq \sqrt{\tilde{C}_1} \widehat{C}_{mom}^{2} \sqrt{m} s^{-2\tau} \sqrt{I_{K,s}}, \\
\Gamma_2(a,q) & \leq \tilde{C}_3 \widehat{C}_{mom}^{3} m s^{-3\tau} I_{K,s}, \\
\Gamma_3(a,q) & \leq \sqrt{ 9\tilde{C}_2 +\tilde{C}_3} \widehat{C}_{mom}^{2} \sqrt{m} s^{-2\tau} \sqrt{ I_{K,s}}, \\
\Gamma_4(a,q) & \leq 4 \tilde{C}_1^{1/3} \widehat{C}_{mom}^{5/3} m^{2/3} s^{-5\tau/3} I_{K,s}^{1/3},\\
\Gamma_5(a,q) & \leq 3 \tilde{C}_1^{1/4} \widehat{C}_{mom}^{3/2} \sqrt{m} s^{-3\tau/2} I_{K,s}^{1/4}.
\end{align*}
Here, the right-hand sides are at most of the order $s^{-\tau}\max\{s^{-2\tau} I_{K,s}, (s^{-2\tau} I_{K,s})^{1/4} \}$. Now Theorem \ref{thm:generalStabilization} completes the proof.
\end{proof}

Our proof of Theorem \ref{thm:MainX} requires for parts (a) and (b) only that for some $q>0$ the $(4+q)$th-moments of the difference operators are bounded. For this it would be sufficient to have - as for the univariate case in \cite{LSY} - a $(4+p)$th-moment condition on the scores in \eqref{eqn:MomentsX} and to consider up to seven additional points in \eqref{eqn:RadiusStabilizationGeneral}, \eqref{eqn:MomentsX}, and \eqref{eqn:DecayK}. To simplify our presentation we decided to assume for all parts of Theorem \ref{thm:MainX} the same slightly stronger conditions. But we also expect that for most applications all moments will be finite and it does not matter how many additional points are considered.

\section{Proofs of the main results} \label{Proofs}

The following proposition, whose proof is deferred to Section \ref{covariance}, is a key ingredient in the proof of Theorem  \ref{mainthm}.

\begin{prop} \label{covdiff}
Let the assumptions of Theorem \ref{mainthm} prevail. Then there exists a constant $C_{cov} \in (0, \infty)$ such that
$$
\left| \sigma_{i j} - \frac{  \Cov \left(  \langle \bar{\mu}^{(i)}_s, f_i \rangle,   \langle \bar{\mu}^{(j)}_s, f_j \rangle \right)} {s} \right| \leq C_{cov} s^{-1/d}, \quad s \geq 1,
$$
for all  $i,j\in\{1,\hdots,m\}$. The constant $C_{cov}$ depends on $d$, $W$, $g$, $m$, $A_1,\hdots,A_m$, $\|f_1\|_\infty$, $\hdots$, $\|f_m\|_\infty$, and all constants in \eqref{translatebd} and \eqref{stab}-\eqref{momStationary}.
\end{prop}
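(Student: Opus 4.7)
The plan is to apply the multivariate Mecke formula (Palm calculus for Poisson processes) to decompose $\Cov(\langle\bar\mu_s^{(i)}, f_i\rangle, \langle\bar\mu_s^{(j)}, f_j\rangle)/s$ into a diagonal single-integral piece
\[
T_1(s) = \int_{A_i\cap A_j} \E\bigl[\xi_s^{(i)}((x,M_x), \P_{sg}^{(x,M_x)})\, \xi_s^{(j)}((x,M_x), \P_{sg}^{(x,M_x)})\bigr]\, f_i(x)f_j(x)\, g(x)\, \dint x
\]
and an off-diagonal double-integral piece
\[
T_2(s) = s\!\int_{A_i}\!\!\int_{A_j}\!\! \Bigl(\E\bigl[\xi_s^{(i)}(\wx,\P_{sg}^{\wx,\wy})\xi_s^{(j)}(\wy,\P_{sg}^{\wx,\wy})\bigr] - \E\xi_s^{(i)}(\wx,\P_{sg}^{\wx})\,\E\xi_s^{(j)}(\wy,\P_{sg}^{\wy})\Bigr)\, f_i(x)f_j(y)\, g(x)g(y)\, \dint x\, \dint y,
\]
which I will match against the two corresponding summands $\sigma_{ij}^{(1)}, \sigma_{ij}^{(2)}$ in \eqref{eqn:sigmaij}.

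For $|T_1(s) - \sigma_{ij}^{(1)}|$, I would use the scaling relation \eqref{xis} to write the integrand in terms of $\xi^{(i)},\xi^{(j)}$ evaluated on the rescaled process $x + s^{1/d}(\P_{sg}-x)$, which is Poisson of intensity $g(x + (\cdot-x)/s^{1/d})$. The intrinsic stabilization assumption \eqref{eqn:Rsinside} localizes both scores to $B(x, R_s^\star)$ with a radius $R_s^\star$ having exponential tails via \eqref{stab} and \eqref{stabStationary}. On such a ball, the Lipschitz bound on $g$ gives total-variation distance $O(s^{-1/d}(R_s^\star)^{d+1})$ between the rescaled inhomogeneous Poisson process and the stationary Poisson process $\mathcal{P}_{g(x)}$, so a coupling argument combined with the $(6+p)$th-moment bound \eqref{mom}--\eqref{momStationary} and Cauchy--Schwarz yields pointwise error $O(s^{-1/d})$ for $x$ away from $\partial W$. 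Boundary effects from \eqref{eqn:AssumptionW} and \eqref{eqn:AssumptionAis} contribute an extra $O(s^{-1/d})$ since the bad layer has Lebesgue measure $O(s^{-1/d})$.

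For $|T_2(s) - \sigma_{ij}^{(2)}|$, I perform the substitution $y = x + w/s^{1/d}$, so that
\[
T_2(s) = \int_{A_i}\!\int_{s^{1/d}(A_j - x)} E_s(x,w)\, f_i(x)f_j(x + w/s^{1/d})\, g(x)g(x+w/s^{1/d})\, \dint w\, \dint x,
\]
where $E_s(x,w)$ is the Palm-covariance integrand above. Applying the translation bound \eqref{translatebd} together with \eqref{xis} rewrites $\xi_s^{(j)}$ at $(y, M_y)$ as $\xi^{(j)}$ at $(x, M_y)$ on the shifted process $x + s^{1/d}(\P_{sg}^{\wx,\wy}-x) - w$, up to a spatial-Lipschitz error $O(\|w/s^{1/d}\|)$. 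Using exponential stabilization on both score arguments, $|E_s(x,w)|$ decays like $\exp(-c\|w\|^d)$ once $\|w\|$ exceeds the scaled stabilization radii (since stabilization balls at $x$ and $y$ are then disjoint so the two Palm scores become independent), giving an integrable envelope in $w$. Replacing $f_j(x+w/s^{1/d})g(x+w/s^{1/d})$ by $f_j(x)g(x)$ costs $O(\|w\|/s^{1/d})$ against this envelope; replacing the Palm-coupled kernel by its stationary limit costs a further $O(s^{-1/d})$ via the same coupling as in the diagonal step; and the difference between the domain $s^{1/d}(A_j-x)$ and $\R^d$ is absorbed by the exponential tail together with \eqref{eqn:AssumptionAis}, again contributing $O(s^{-1/d})$.

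The main obstacle is the simultaneous handling of the two-point Palm coupling and the boundary corrections in $T_2(s)$: one must dominate the integrand uniformly in $w$ by a kernel that is both (a) integrable in $w$ by stabilization tails, and (b) bounded in $\E[\cdot]^{1/2}$-norm by the moment condition, while keeping the Lipschitz perturbation of $g, f_j$, and the translation of $\xi^{(j)}$, aligned so that the accumulated error remains $O(s^{-1/d})$ after integrating in $w$ and $x$. This bookkeeping is where the exponent $6+p$ in \eqref{mom}--\eqref{momStationary} is used, via Hölder's inequality, to convert small-probability events from the coupling and the boundary layer into absolute errors of the claimed order.
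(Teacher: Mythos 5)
Your overall scheme matches the paper's: apply the multivariate Mecke formula to split $\Cov(\langle\bar\mu_s^{(i)},f_i\rangle,\langle\bar\mu_s^{(j)},f_j\rangle)/s$ into a diagonal piece $J_1$ and a Palm-covariance double integral $J_2$, substitute $y = x + s^{-1/d}w$, compare each piece to \eqref{eqn:sigmaij}, and handle the boundary layer with \eqref{eqn:AssumptionW}, \eqref{eqn:AssumptionAis}, and the co-area formula. Your inventory of error sources (Lipschitz perturbation of $g,f_j$, the translation of $\xi^{(j)}$ via \eqref{translatebd}, the $\P_{sg}\leftrightarrow\P_{sg(x)}$ coupling, and the domain mismatch) is also correct. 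But two steps, as you have written them, would not deliver the rate $s^{-1/d}$.

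First, the claim that ``coupling $+$ moments $+$ Cauchy--Schwarz yields pointwise error $O(s^{-1/d})$'' is not correct when the scores are unbounded. If $\Delta\xi$ vanishes outside a discrepancy event $D$ with $\PP(D)\sim s^{-1/d}$ and the scores are only in $L^{6+p}$, then H\"older gives $|\E\Delta\xi|\le(\E|\Delta\xi|^q)^{1/q}\PP(D)^{1-1/q}$, which is $s^{-(1-1/q)/d}$ — a genuine loss for any finite $q$. The paper avoids isolating such a single small-probability event: it couples $\P_{sg}$ and $\P_{sg(x)}$ through an auxiliary Poisson process $\eta$ on $\R^d\times[0,\infty)\times\MM$, bounds the discrepancy indicator by a \emph{sum} over candidate points $(z,t,m_z)\in\eta$, and applies the Mecke formula to convert this sum into a $z$-integral weighted by $\int_0^\infty\mathbf{1}\{t\in s\langle g(z),g(x)\rangle\}\,\dint t \le sL\|z-x\|$. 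Monotonicity of $R_s$ is used here so the Mecke point does not enlarge the stabilization radius. H\"older (exponent $3$) is applied \emph{inside} the $z$-integral to split off a moment constant, a $\PP(A_s)^{1/3}$ envelope, and $\PP(\|z-x\|\le\cdot)^{1/3}$; the rate $s^{-1/d}$ then comes out of $\int_{\R^d} sL\|z-x\|\exp(-c's\|z-x\|^d)\,\dint z\asymp s^{-1/d}$, not from a H\"older exponent applied to a probability of that size. This is the content of Lemmas \ref{lem:Covdiff}--\ref{lem:Covdiff3}, and it is precisely where monotonicity of $R_s$ and the Lipschitz assumption on $g$ enter.

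Second, for $J_2$ you assert separately $|E_s(x,w)|\lesssim\exp(-c\|w\|^d)$ (independence envelope) and $|E_s(x,w)-E_\infty(x,w)|\lesssim s^{-1/d}$ (pointwise coupling), and then integrate in $w$. These two additive bounds only give $\int_{\R^d}\min\{Cs^{-1/d},C'\exp(-c\|w\|^d)\}\,\dint w\asymp s^{-1/d}\log s$, i.e.\ an extraneous logarithm. What is needed is the \emph{multiplicative} bound $|E_s-E_\infty|\lesssim s^{-1/d}\exp(-c\|w\|^d)$, and the paper obtains it by inserting the event $A_s(x,y,\eta)$ and its complement \emph{before} applying the coupling lemmas. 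On $A_s(x,y,\eta)^c$, the independence from \eqref{eqn:Rsinside} of $\mathbf{1}\{A_s^{(1)}(\cdot)^c\}\xi_s^{(i)}$ and $\mathbf{1}\{A_s^{(2)}(\cdot)^c\}\xi_s^{(j)}$ factorizes the Palm product and cancels against the product of expectations, yielding $J_2=I_1-I_2-I_3$ where each $I_k$ already carries one of the small indicators $\mathbf{1}\{A_s\}$, $\mathbf{1}\{A_s^{(1)}\}$, $\mathbf{1}\{A_s^{(2)}\}$. The envelope then appears multiplicatively (as $\PP(A_s)^{1/3}$, etc.) in the coupling error of each $I_k$, and the remaining steps (translation by \eqref{translatebd}, Lipschitz replacement of $g_j$, truncation of the $w$-domain via \eqref{eqn:AssumptionAis}) go through term by term. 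You correctly identify this obstacle in your final paragraph, but the resolution requires making the event split explicit, not merely invoking H\"older on small-probability events after the fact.
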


\begin{proof}[Proof of Theorem  \ref{mainthm}] We first prove \eqref{eqn:maind3}.  To do so, we deduce it from part (a) of Theorem \ref{thm:MainX}. Hence, we let $\XX=W$, $\dist$ the Euclidean distance, and $\lambda$  the measure $\Q$ with density $g$ with respect to the Lebesgue measure. Since $g$ is bounded, the assumption \eqref{eqn:AssumptionSpace} is satisfied with $\gamma = d$ as discussed after \eqref{eqn:AssumptionSpace}. For $i\in\{1,\hdots,m\}$ we define
$$
\tilde{\xi}_s^{(i)}(\wx,\M):= {\bf 1}\{\wx\in A_i\times\MM\} \, f_i(x) \, \xi_s^{(i)}(\wx,\M), \quad \wx\in \wXX, \ \M\in\mathbf{N}, \ s\geq 1.
$$
Assumptions \eqref{stab} and \eqref{mom} imply immediately that the scores $(\tilde{\xi}_s^{(1)})_{s\geq 1},\hdots,(\tilde{\xi}_s^{(m)})_{s\geq 1}$ satisfy \eqref{eqn:StabilizationX} and \eqref{eqn:MomentsX} with $\alpha_{stab}=d$. Choosing $K=\bigcup_{i=1}^m A_i$ we find that the scores $(\tilde{\xi}_s^{(1)})_{s\geq 1},\hdots,(\tilde{\xi}_s^{(m)})_{s\geq 1}$ satisfy \eqref{eqn:DecayK} with $C_K=1$ and arbitrary $c_K$ and $\alpha_K$. Hence, part (a) of Theorem \ref{thm:MainX} with $\tau =1/2$ yields
\begin{equation} \label{eqn:ProofMain1}
\begin{split}
& \d_3 \left( s^{-1/2} \left( \langle \bar{\mu}^{(1)}_s, f_1 \rangle  ,\hdots, \langle \bar{\mu}^{(m)}_s, f_m \rangle \right), N_\Sigma \right) \\
& \leq  \frac{m}{2} \sum_{i,j=1}^m |\sigma_{ij}-\frac{ \Cov(\langle \bar{\mu}^{(i)}_s, f_i \rangle,\langle \bar{\mu}^{(j)}_s, f_j \rangle)}{s}| + C_1 (m^2 s^{-1} \sqrt{I_{K,s}} + m^3 s^{-3/2} I_{K,s}  ), \quad s \geq 1,
\end{split}
\end{equation}
with a constant $C_1\in(0,\infty)$. Proposition \ref{covdiff} implies that
\begin{equation}\label{eqn:ProofMain2}
 \sum_{i,j=1}^m |\sigma_{ij}-\frac{{\Cov}(\langle \bar{\mu}^{(i)}_s, f_i \rangle,\langle \bar{\mu}^{(j)}_s, f_j \rangle)}{s}|\leq C_{cov} s^{-1/d}, \quad s\geq 1.
\end{equation}
Recalling $\gamma = d$, a short computation, where one replaces $K$ by a ball containing $K$, shows that there exists a constant $\widetilde{C}_{K}\in(0,\infty)$ such that
\begin{equation}\label{eqn:ProofMain3}
I_{K,s} \leq \widetilde{C}_K s, \quad s\geq 1.
\end{equation}
Combining \eqref{eqn:ProofMain1} with \eqref{eqn:ProofMain2} and \eqref{eqn:ProofMain3} completes the proof of \eqref{eqn:maind3}.

Appealing to part (b) of Theorem \ref{thm:MainX}, we prove  \eqref{eqn:maind2} for the $\d_2$-distance by following the proof of the $\d_3$-bound in \eqref{eqn:maind3}. With $\tau = 1/2$, we obtain
\begin{align*}
& \d_2 \left( s^{-1/2} \left( \langle \bar{\mu}^{(1)}_s, f_1 \rangle  ,\hdots, \langle \bar{\mu}^{(m)}_s, f_m \rangle \right), N_{\Sigma} \right) \\
& \leq v(\Sigma) \sum_{i,j=1}^m |\sigma_{ij}-\frac{ {\Cov}(\langle \bar{\mu}^{(i)}_s, f_i \rangle,\langle \bar{\mu}^{(j)}_s, f_j \rangle)}{s}| + C_2 v(\Sigma) (m s^{-1} \sqrt{I_{K,s}} + m^3 s^{-3/2} I_{K,s})
\end{align*}
for $s\geq 1$ with a constant $C_2\in(0,\infty)$ and $v$ as in \eqref{eqn:Definition_nu}. Recalling \eqref{eqn:ProofMain2} and \eqref{eqn:ProofMain3} gives the result. The proof of \eqref{eqn:maind2} for $\d_{convex}$ follows similarly from part (c) of Theorem \ref{thm:MainX}.
\end{proof}

\begin{proof}[Proof of Theorem \ref{mainthmcentered}]
Since $\Sigma$ is replaced by $\Sigma(s)$, the left-hand side of \eqref{eqn:ProofMain2} vanishes.  Now follow the argument for the proof of Theorem  \ref{mainthm}.  \end{proof}

\begin{proof}[Proof of Proposition \ref{prop:Optimality_General}]
For $\d\in\{\d_3,\d_2,\d_{convex}\}$ it follows from the triangle inequality that
\begin{equation}\label{eqn:LowerBoundd3}
\begin{split}
& \d \left( s^{-1/2} \left( \langle \bar{\mu}^{(1)}_s, f_1 \rangle  ,\hdots, \langle \bar{\mu}^{(m)}_s, f_m \rangle \right), N_\Sigma \right)\\
& \geq
\d(N_{\Sigma},N_{\Sigma(s)}) - \d\left( s^{-1/2} \left( \langle \bar{\mu}^{(1)}_s, f_1 \rangle  ,\hdots, \langle \bar{\mu}^{(m)}_s, f_m \rangle \right), N_{\Sigma(s)} \right), \quad s \geq 1.
\end{split}
\end{equation}
Since the functions $h_{ij}: \R^m\ni (u_1,\hdots,u_m)\mapsto u_i u_j/2$, $i,j\in\{1,\hdots,m\}$, belong to the set of test functions $\mathcal{H}^{(3)}_m$, we have that
\begin{align*}
\d_3(N_{\Sigma},N_{\Sigma(s)}) & \geq \max_{i,j\in\{1,\hdots,m\}} | \E h_{ij}(N_\Sigma) - \E  h_{ij}(N_{\Sigma(s)} ) | \\
& \geq \frac{1}{2} \max_{i,j\in\{1,\hdots,m\}} \bigg|\sigma_{ij}-\frac{\Cov(\langle \bar{\mu}^{(i)}_s, f_i \rangle,\langle \bar{\mu}^{(j)}_s, f_j \rangle)}{s}\bigg|, \quad s \geq 1.
\end{align*}
Together with \eqref{eqn:LowerBoundd3} and Theorem \ref{mainthmcentered}(a) this shows \eqref{lowerbound}.

Next, to treat $\d_2$ and $\d_{convex}$, we require some intermediate steps. Let $N_1$ and $N_2$ be two centered Gaussian random variables with standard deviations $\sigma_1$ and $\sigma_2$. For $\sigma_1,\sigma_2\neq 0$ we have that
$$
|\PP(N_1 \leq \sigma_1) - \PP(N_2\leq \sigma_1)| = |\PP(N \leq 1) - \PP(N\leq \sigma_1/\sigma_2)|=\varphi(y) |1-\sigma_1/\sigma_2|
$$
with $y$ between $1$ and $\sigma_1/\sigma_2$, where $N$ is a standard Gaussian random variable with density $\varphi$. Hence, there exists a constant $\tilde{c}\in(0,\infty)$ depending on $\sigma_1$ such that
\be \label{bound1}
\sup_{u\in\R} |\PP(N_1\leq u)-\PP(N_2\leq u)| \geq \tilde{c} |\sigma_1^2-\sigma_2^2|
\ee
if $\sigma_1$ and $\sigma_2$ are sufficiently close. This inequality is still true for $\sigma_1=0$ or $\sigma_2=0$ provided that $\tilde{c}$ is sufficiently small.

Choose $h\in C^2(\R)$ such that $\|h'\|_\infty,\|h''\|_\infty\leq 1$, $h$ is decreasing on $(-\infty,0)$ and increasing on $(0,\infty)$, and $h$ coincides with $u\mapsto u^2$ on some interval $(-\varepsilon,\varepsilon)$ so that
$$
|\E h(N_1) - \E h(N_2)| = |\E h(\sigma_1 N) - \E h(\sigma_2 N)| \geq \E {\bf 1}\{\sigma_1N,\sigma_2N\in (-\varepsilon,\varepsilon)\} N^2 |\sigma_1^2-\sigma_2^2|,
$$
where we have used that $h(\sigma_1 N) - h(\sigma_2 N)$ has always the same sign depending on the relation between $\sigma_1$ and $\sigma_2$. Thus one can find a constant $\tilde{c}\in(0,\infty)$ only depending on $\sigma_1$ such that
\be \label{bound2}
\d_2(N_1, N_2) \geq \tilde{c} |\sigma_1^2-\sigma_2^2|
\ee
for $\sigma_1$ and $\sigma_2$ sufficiently close.

Let $i,j\in\{1,\hdots,m\}$ and let $h \in {\cal H}_1^{(2)}$, where $h$ is a test function for the univariate $\d_2$-distance. Then a computation shows that $\R^m \ni (x_1,\hdots,x_m) \mapsto \frac{1}{2} h(x_i \pm x_j)$ belongs to ${\cal H}_m^{(2)}$. This observation yields
$$
\d_2(N_{\Sigma},N_{\Sigma(s)}) \geq \frac{1}{2} \d_2(N_{\Sigma}^{(i)}\pm N_{\Sigma}^{(j)},N_{\Sigma(s)}^{(i)}\pm N_{\Sigma(s)}^{(j)}),
$$
which also holds for the $\d_{convex}$-distance. Thus, the above considerations show that there exist constants $c,\varepsilon\in (0,\infty)$ only depending on $\Sigma$ such that, for $s\geq 1$ with
$$
\max_{i,j\in\{1,\hdots,m\}} \bigg|\frac{\Cov(\langle \bar{\mu}_s^{(i)}, f_i \rangle, \langle \bar{\mu}_s^{(j)}, f_j \rangle)}{s} - \sigma_{ij}\bigg| \leq \varepsilon
$$
and $\d\in\{\d_2, \d_{convex}\}$,
\begin{equation}\label{eqn:d2dHNormals}
\begin{split}
\d(N_{\Sigma},N_{\Sigma(s)}) & \geq \frac{1}{2} \max_{i,j\in\{1,\hdots,m\}} \d(N_{\Sigma}^{(i)}\pm N_{\Sigma}^{(j)},N_{\Sigma(s)}^{(i)}\pm N_{\Sigma(s)}^{(j)}) \\
& \geq \frac{c}{2} \max_{i,j\in\{1,\hdots,m\}} \max\big\{ |\Var(N_{\Sigma}^{(i)}+ N_{\Sigma}^{(j)}) - \Var(N_{\Sigma(s)}^{(i)}+ N_{\Sigma(s)}^{(j)})|,\\
& \hspace{3.5cm} |\Var(N_{\Sigma}^{(i)}- N_{\Sigma}^{(j)}) - \Var(N_{\Sigma(s)}^{(i)}- N_{\Sigma(s)}^{(j)})| \big\} \\
& \geq \frac{c}{4} \max_{i,j\in\{1,\hdots,m\}} \big| \big(\Var(N_{\Sigma}^{(i)}+ N_{\Sigma}^{(j)}) - \Var(N_{\Sigma}^{(i)}- N_{\Sigma}^{(j)}) \big)\\
& \hspace{3cm} - \big(\Var(N_{\Sigma(s)}^{(i)}+ N_{\Sigma(s)}^{(j)}) - \Var(N_{\Sigma(s)}^{(i)}- N_{\Sigma(s)}^{(j)}) \big) \big| \\
& = c \max_{i,j\in\{1,\hdots,m\}} \bigg|\frac{\Cov(\langle \bar{\mu}_s^{(i)}, f_i \rangle, \langle \bar{\mu}_s^{(j)}, f_j \rangle)}{s} - \sigma_{ij}\bigg|.
\end{split}
\end{equation}
Here the middle inequality is justified by the lower bounds \eqref{bound1} and \eqref{bound2} for $\d_{convex}$ and $\d_2$, respectively.
Combining \eqref{eqn:LowerBoundd3}, \eqref{eqn:d2dHNormals}, and Theorem \ref{mainthmcentered}(b) completes the proof of \eqref{lowerboundII}.
 \end{proof}

\begin{proof}[Proof of Proposition \ref{optimality}]
We have that
$$
V_s=\sum_{x\in\widetilde{\P}_s} \xi_1(x,x+s^{1/d}(\widetilde{\P}_s-x)) \quad \text{and} \quad  E_s=\sum_{x\in\widetilde{\P}_s} \xi_2(x,x+s^{1/d}(\widetilde{\P}_s-x))
$$
with $\xi_1(x,\M):=1$ and $\xi_2(x,\M):=\frac{1}{2}\sum_{y\in\M} \mathbf{1}\{\|x-y\|\leq \varrho\}$. Hence,  $V_s$ and $E_s$ are stabilizing functionals of the form considered in Theorems \ref{mainthm} and \ref{mainthmcentered}. It follows from \eqref{eqn:Limit_sigma_ij} and \eqref{eqn:sigmaij} together with a longer computation that the matrix $\Sigma$ in \eqref{eqn:sigma_RGG} is the asymptotic covariance matrix of $s^{-1/2}(V_s-\mathbb{E} V_s,E_s - \mathbb{E} E_s)$. Obviously, $\Sigma$ is positive definite. The covariance matrix of $(V_s,E_s)$ is positive definite for all $s\geq 1$ since $V_s$ cannot be written as a linear transformation of $E_s$ or vice versa. The upper bound in Proposition \ref{optimality} follows from Theorem \ref{mainthm}.
For $s\geq 1$ a computation using the multivariate Mecke formula yields
\begin{align*}
\Cov(V_s,E_s) & = \Cov\bigg( \sum_{x\in\widetilde{\P}_s} 1, \frac{1}{2} \sum_{x,y\in \widetilde{\P}_s, x\neq y} \mathbf{1}\{\|x-y\|\leq \varrho s^{-1/d}\} \bigg)\\
 & = s^2 \int_{([0,1]^d)^2} \mathbf{1}\{\|x-y\|\leq \varrho s^{-1/d}\} \, \dint (x,y).
\end{align*}
Since
$$
\sigma_{12}= s \int_{[0,1]^d\times \R^d} \mathbf{1}\{\|x-y\|\leq \varrho s^{-1/d}\} \, \dint (x,y),
$$
we have that
$$
\sigma_{12}-\frac{\Cov(V_s,E_s)}{s} = s \int_{[0,1]^d\times ([0,1]^d)^c} \mathbf{1}\{\|x-y\|\leq \varrho s^{-1/d}\} \, \dint (x,y).
$$
Here, the right-hand side can be bounded below by $c_\varrho s^{-1/d}$ with a constant $c_{\varrho}\in(0,\infty)$ depending on $\varrho$ and $d$. The asserted lower bound follows from Proposition \ref{prop:Optimality_General}.
\end{proof}

\begin{proof}[Proof of Proposition \ref{prop:PositiveDefiniteness}]
By translation invariance of $(\xi_s^{(1)})_{s\geq 1},\hdots,(\xi_s^{(m)})_{s\geq 1}$, we can re\-write $\sigma_{ij}$, $i,j\in\{1,\hdots,m\}$, which is the limit of $s^{-1}\Cov(\langle \bar{\mu}^{(i)}_s, f_i \rangle,\langle \bar{\mu}^{(j)}_s, f_j \rangle)$ for $s\to\infty$ (see \eqref{eqn:Limit_sigma_ij}), as
\begin{equation}\label{eqn:sigma_ij_translationinvariant}
\sigma_{ij}=\int_{A_i\cap A_j} f_i(x)f_j(x) (\sigma^{(1)}_{ij}(g(x)) g(x) + \sigma^{(2)}_{ij}(g(x)) g(x)^2) \, \dint x
\end{equation}
with
\begin{align*}
\sigma^{(1)}_{ij}(u) & := \E\xi^{(i)}(( \0 ,M_{\0}), \P_{u}) \xi^{(j)}(( \0,M_{\0}), \P_{u})\\
\sigma^{(2)}_{ij}(u) & := \int_{\R^d} \E \xi^{(i)}(( \0,M_{ \0}), \P_{u}^{(y, M_{y})})\xi^{(j)}(( \0,M_{y}), \P_{u}^{( \0, M_{ \0})}-y)\\
& \quad \quad \quad - \E \xi^{(i)}(( \0,M_{\0}), \P_{u}) \E \xi^{(j)}(( \0,M_{y}), \P_{u}- y) \, \dint y
\end{align*}
for $u>0$. Moreover, let $\Sigma^{(1)}(u):=(\sigma^{(1)}_{ij}(u))_{i,j=1,\hdots,m}$ and $\Sigma^{(2)}(u):=(\sigma^{(2)}_{ij}(u))_{i,j=1,\hdots,m}$ for $u>0$. Hence, we see that, for any $a=(a_1,\hdots,a_m)\in\R^m$ with $a\neq 0$,
$$
a^T \Sigma a = \int_{\R^d} a(x)^T (\Sigma^{(1)}(g(x)) g(x) + \Sigma^{(2)}(g(x)) g(x)^2) a(x)\, \dint x
$$
with $a(x):=(a_1{\bf 1}\{ x\in A_1\}f_1(x),\hdots,a_m{\bf 1}\{x\in A_m\}f_m(x))$. Consequently, $\Sigma$ is positive definite if
$$
\Sigma_u:=\Sigma^{(1)}(u) u + \Sigma^{(2)}(u)u^2
$$
is positive definite for all $u>0$. Applying \eqref{eqn:sigma_ij_translationinvariant} for $W=\R^d$, $g\equiv u$ with $u>0$, $A_1=\hdots=A_m=A$, and $f_1\equiv\hdots\equiv f_m=1$, we see that $\Vol(A)\Sigma_u$ is the asymptotic covariance matrix of
$$
\frac{1}{\sqrt{s}} \big(\sum_{ \wx \in \P_{su}\cap \widehat{A}} \xi_s^{(1)}(\wx,\P_{su}),\hdots,\sum_{\wx \in \P_{su}\cap \widehat{A} } \xi_s^{(m)}(\wx,\P_{su})\big)
$$
as $s\to\infty$, which is positive definite by assumption.
\end{proof}

\section{Convergence of the covariances} \label{covariance}

This section establishes the proof of Proposition \ref{covdiff}. While we have aimed for the most direct and natural approach, our methods are nonetheless rather delicate and lengthy.  We believe this is 
unavoidable.  The arguments considerably refine  those  employed in \cite{BY05} and \cite{Pe07} to prove convergence of the variances to the asymptotic variance. In contrast to this paper, these works did not aim for quantitative bounds.  Here we use coupling arguments, the co-area formula, and the monotonicity of
$R_s$.

Throughout we let the assumptions of Theorem \ref{mainthm} (and, hence, those of Proposition \ref{covdiff}) be satisfied. We prepare the proof with some lemmas describing the average behavior of stabilizing score functions on the inputs $\P_{sg}$ and $\P_{sg(x)}$. To do so, it will be convenient to couple $\P_{sg}$ and $\P_{sg(x)}$. Let $\eta$ be a marked Poisson process on $\R^d\times[0,\infty)\times \MM$, where the intensity measure on $\R^d\times[0,\infty)$ is the Lebesgue measure and where the intensity measure on $\MM$ is $\Q_{\MM}$. For $(z,t, {M_z}) \in\eta$, $x\in W$, and $s\geq 1$ let ${(z,M_z)}\in \P_{sg}$ if $t\leq sg(z)$ and $z\in W$ and let ${(z,M_z)}\in\P_{sg(x)}$ if $t\leq sg(x)$.

Recall that $R_s$ denotes the radius of stabilization for all $\xi_s^{(i)}$, $i\in\{1,\hdots,m\}$. Moreover, we use the shorthand notation $y_s:=s^{-1/d}y$ for $y\in\R^d$ and $s\geq 1$. By $\wx$ we abbreviate $(x,M_x)$, where $x\in \mathbb{R}^d$ and $M_x$ is a random mark distributed according to $\mathbb{Q}_{\mathbb{M}}$ and independent of everything else. 
For $s\geq 1$, $x\in W$, and $y\in\R^d$ such that $x+y_s\in W$ we put
\begin{align*}
\tilde{R}_s(x, \eta) & :=\max\{R_s(\wx,\P_{sg}), R_s(\wx,\P_{sg(x)})\}\\
\tilde{R}_s(x,y,\eta) & :=\max\{R_s( \wxy,  \P_{sg}), R_s( \wxy,\P_{sg(x)}), R_s( \wxy - y_s ,\P_{sg(x)} - y_s)\}
\end{align*}
and define the events
\begin{align*}
A^{(1)}_s(x,y,\eta) & := \{\tilde{R}_s(x, \eta) \geq \|y_s\|/2\}, \\ 
A^{(2)}_s(x,y,\eta) & := \{\tilde{R}_s(x, y, \eta) \geq \|y_s\|/2\}, 
\end{align*}
and
$$
A_s(x,y,\eta):=A^{(1)}_s(x,y,\eta)\cup A^{(2)}_s(x,y,\eta).
$$
It follows from \eqref{eqn:Rsinside} that $A^{(1)}_s(x,y,\eta)^c$ and $A^{(2)}_s(x,y,\eta)^c$ are independent. By exponential stabilization \eqref{stab} {and \eqref{stabStationary}}, there are constants $C_{{0}},c_{{0}}\in(0,\infty)$ such that, for all $s\geq 1$, $x\in W$, and $y\in\R^d$ with $x+y_s\in W$,
\be \label{exp1}
\PP(A_s(x,y,\eta))  \leq C_{{0}} \exp(- c_{{0}} \|y\|^d).
\ee

\begin{lemm}\label{lem:Covdiff}
There exist constants $C_1,c_1\in(0,\infty)$ such that for {all $i,j\in\{1,\hdots,m\}$,} $x\in W$, $y\in\R^d$, and $s\geq 1$ with {$x+y_s\in W$},
\begin{align*}
& \big| \E \xi_s^{(i)}(\wx, \P_{sg}^{\wxy }) \, \xi_s^{(j)}(\wxy, \P_{sg}^{\wx}) \, {\bf 1}\{A_s(x,y,\eta)\} \\
& \ -\E \xi_s^{(i)}(\wx, \P_{sg(x)}^{\wxy }) \, \xi_s^{(j)}(\wxy, \P_{sg(x)}^{\wx}) \, {\bf 1}\{A_s(x,y,\eta)\}\big| \\
& {\leq C_1 \big(s^{-1/d}+s^{-1/d}\|y\|^{d+1}+\exp(-c_1s\dist(x,\partial W)^d)\big) \exp(-c_1\|y\|^d)}.
\end{align*}
\end{lemm}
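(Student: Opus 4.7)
The plan is to identify an event on which the two integrands coincide, then bound the contribution from its complement by a Mecke--Palm calculation on the symmetric difference $\Delta := \P_{sg}\,\triangle\,\P_{sg(x)}$. Under the coupling $\eta$, $\Delta$ is a Poisson process whose spatial intensity at $z$ is at most $sL\|z-x\|$ for $z\in W$ (by Lipschitz continuity of $g$) and at most $s\|g\|_\infty$ for $z\notin W$. Using the locality condition \eqref{eqn:Rsinside} together with monotonicity of $R_s$, I would check that on the event
\[
\mathcal{E}:=\bigl\{\Delta \cap \bigl(B^d(x,\tilde R_s(x,\eta))\cup B^d(x+y_s,\tilde R_s(x,y,\eta))\bigr)=\emptyset\bigr\},
\]
the configurations $\P_{sg}^{\wxy}$ and $\P_{sg(x)}^{\wxy}$ agree on $\wB^d(x,\tilde R_s(x,\eta))$, and similarly around $x+y_s$, so the scores at $\wx$ and at $\wxy$ coincide for the two processes. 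Thus the integrand $F_s-G_s$ vanishes on $\mathcal{E}$, and the lemma reduces to controlling $\E |F_s-G_s|\,\mathbf{1}_{A_s\cap \mathcal{E}^c}$.

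Next I would bound $\mathbf{1}_{\mathcal{E}^c}\leq |\Delta\cap B^d(x,\tilde R_s(x,\eta))|+|\Delta\cap B^d(x+y_s,\tilde R_s(x,y,\eta))|$ and apply the multivariate Mecke--Palm formula to $\Delta=\Delta^+\cup\Delta^-$, each summand a Poisson process with intensity bounded as above. This converts the expectation into an integral over $z$ against the $\Delta$-intensity, with $\eta$ replaced by an augmentation with the deterministic point at $z$. Since adding a point only decreases each $R_s$ (by monotonicity), the exponential tail bounds \eqref{stab}--\eqref{stabStationary} and moment bounds \eqref{mom}--\eqref{momStationary} still apply to the augmented configuration (at most two extra points $\wxy$ and $z$, well within the budget of nine). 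A Cauchy--Schwarz step then bounds the inner expectation by $C\sqrt{\PP(A_s'\cap\{z\in\text{relevant ball}\})}$. On this intersection the relevant stabilization radius must exceed $\max(\|y_s\|/2,\|z-x\|)$ (or $\max(\|y_s\|/2,\|z-(x+y_s)\|)$), giving a factor $\exp(-cs\max(\cdot)^d/2)$; using $\max(a,b)^d\geq (a^d+b^d)/2$, this splits as $\exp(-c\|y\|^d/2^{d+2})\exp(-cs\|z-x\|^d/4)$.

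The three terms in the stated bound then arise from three integrations. Splitting the interior integral against $sL\|z-x\|$ at $\|z-x\|=\|y_s\|/2$ produces the $s^{-1/d}\|y\|^{d+1}\exp(-c\|y\|^d)$ contribution from the inner region (where the ball has volume of order $\|y_s\|^d$) and the $s^{-1/d}\exp(-c\|y\|^d)$ contribution from the tail (using the polar identity $\int_0^\infty r^d\exp(-csr^d/4)\,dr\lesssim s^{-(d+1)/d}$). The boundary term $\exp(-c s \dist(x,\partial W)^d)$ arises from integrating $sg(x)\exp(-cs\|z-x\|^d/4)$ over $z\in W^c$, where $\|z-x\|\geq \dist(x,\partial W)$, via the same polar calculation. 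The ball centered at $x+y_s$ is handled identically after the translation $z\mapsto z-y_s$, using $\|z-x\|\leq \|z-(x+y_s)\|+\|y_s\|$ to absorb the shift.

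The principal technical obstacle is the strong dependence of $\Delta$ and the radii $\tilde R_s(x,\eta)$, $\tilde R_s(x,y,\eta)$ under $\eta$: these cannot simply be treated as independent. Mecke--Palm combined with monotonicity of $R_s$ resolves this by extracting one $\Delta$-point at a time and passing through to a radius on the augmented configuration, which retains the exponential tail bound. Obtaining the sharp $s^{-1/d}$ exponent, rather than a weaker power coming from a cruder Hölder estimate on $\PP(A_s\cap\mathcal{E}^c)$, relies crucially on the moment identity $\E \tilde R_s^{d+1}\lesssim s^{-(d+1)/d}$, which is a direct consequence of exponential stabilization in the $d$-th power.
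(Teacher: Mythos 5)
Your proof is correct and follows essentially the same route as the paper's: both reduce to showing the integrand vanishes unless the two coupled processes differ inside the relevant stabilization balls, then control the bad event via the Mecke formula on the symmetric difference, using monotonicity of $R_s$ to retain the tail bound after adding a point, H\"older (you use Cauchy--Schwarz, the paper uses H\"older with exponents $3,3,3$) with \eqref{mom}--\eqref{momStationary} and \eqref{exp1}, and the Lipschitz bound on $g$ to produce the intensity factor $sL\|z-x\|$, whence the $s^{-1/d}$ and $s^{-1/d}\|y\|^{d+1}$ contributions.

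The only genuine, if cosmetic, difference is your handling of $W^c$: you integrate $sg(x)\exp(-cs\|z-x\|^d/4)$ directly over $z\in W^c$ using $\|z-x\|\geq\dist(x,\partial W)$, whereas the paper extracts a separate term via the indicator $\mathbf{1}\{\tilde R_s(x,\eta)\geq\dist(x,\partial W)\}$ before applying H\"older and \eqref{stab}--\eqref{stabStationary}. Your route is slightly more transparent about why the Lipschitz bound only applies on $W$ (the paper's displayed inequality with $\int_0^\infty\mathbf{1}\{t\in s\langle g(z),g(x)\rangle\}\,\dint t\leq Ls\|z-x\|$ is, strictly speaking, an interior-$z$ bound, with the $z\notin W$ contribution already absorbed by the boundary indicator), while the paper's indicator trick sidesteps any need to discuss the intensity on $W^c$ at all. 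Both yield the same three-term estimate. The key nonobvious steps you identify—Mecke followed by monotonicity to freeze the augmented radii, and the $d$th-power exponential tail giving the sharp $s^{-1/d}$—are exactly what makes the paper's computation of $S_2$ work.
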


\begin{proof}
We use the abbreviations
$$
\xi_s^{(i,j)}(x,y,\eta):= \big| \xi_s^{(i)}(\wx, \P_{sg}^{\wxy}) \, \xi_s^{(j)}(\wxy, \P_{sg}^{\wx})\big|  + \big|\xi_s^{(i)}(\wx, \P_{sg(x)}^{\wxy}) \, \xi_s^{(j)}(\wxy, \P_{sg(x)}^{\wx})\big|
$$
and
\begin{align*}
& U_s(x,y,\eta)\\
& := \{ \P_{sg} \cap \wB^d({x}, R_s(\wx,\P_{sg})) \neq \P_{sg(x)}\cap \wB^d({x},R_s(\wx,\P_{sg(x)}))\}\\
& \quad \quad \cup \{ \P_{sg} \cap \wB^d({x+y_s},R_s(\wxy,\P_{sg})) \neq \P_{sg(x)}\cap \wB^d({x+y_s}, R_s(\wxy,\P_{sg(x)}))\}.
\end{align*}
Given the event $U_s(x,y,\eta)^c$ we have by the definition of the radius of stabilization in \eqref{rstab} that
$$
\xi_s^{(i)}(\wx, \P_{sg}^{\wxy }) \, \xi_s^{(j)}(\wxy, \P_{sg}^{\wx}) = \xi_s^{(i)}(\wx, \P_{sg(x)}^{\wxy }) \, \xi_s^{(j)}(\wxy, \P_{sg(x)}^{\wx}).
$$
This leads to
\begin{equation}\label{BoundS}
\begin{split}
S:=& \big| \E \xi_s^{(i)}(\wx, \P_{sg}^{\wxy }) \, \xi_s^{(j)}(\wxy, \P_{sg}^{\wx}) \, {\bf 1}\{A_s(x,y,\eta)\} \\
& \ -\E \xi_s^{(i)}(\wx, \P_{sg(x)}^{\wxy }) \, \xi_s^{(j)}(\wxy, \P_{sg(x)}^{\wx}) \, {\bf 1}\{A_s(x,y,\eta)\}\big| \\
\leq & \E \mathbf{1}\{U_s(x,y,\eta)\} \xi_s^{(i,j)}(x,y,\eta) {\bf 1}\{A_s(x,y,\eta)\}.
\end{split}
\end{equation}
From \eqref{eqn:Rsinside} we deduce that, for $\M_1,\M_2\in\mathbf{N}$ and $\widehat{z}\in\widehat{W}$, 
$$
R_s(\widehat{z},\M_1)=R_s(\widehat{z},\M_2) \quad \text{if} \quad \M_1\cap \widehat{B}^d(z,R_s(\widehat{z},\M_1))=\M_2\cap \widehat{B}^d(z,R_s(\widehat{z},\M_1)).
$$
Thus, we obtain
\begin{align*}
& {\bf 1}\{ \P_{sg}\cap \wB^d({x},R_s(\wx ,\P_{sg})) \neq \P_{sg(x)}\cap \wB^d({x},R_s(\wx,\P_{sg(x)}))\}\\
& \leq {\bf 1}\{\tilde{R}_s(x,\eta)\geq \dist(x,\partial W) \} +\sum_{(z,t, M_z)\in\eta} {\bf 1}\{t\in s\langle g(z),g(x)\rangle\} \, {\bf 1}\{\|z-x\|\leq \tilde{R}_s(x,\eta)\}
\end{align*}
and
\begin{align*}
& {\bf 1}\{ \P_{sg} \cap \wB^d( {x+y_s}, R_s(\wxy,\P_{sg})) \neq \P_{sg(x)}\cap \wB^d({x+y_s},R_s({\wxy},\P_{sg(x)}))\}\\
& \leq {\bf 1} \{\tilde{R}_s(x, y, \eta)\geq \dist(x+y_s,\partial W) \}\\
& \quad +\sum_{(z,t, M_z) \in\eta} {\bf 1}\{t\in s\langle g(z),g({x})\rangle\} {\bf 1}\{\|z-x-{y_s}\|\leq \tilde{R}_s(x,y,\eta)\},
\end{align*}
where $\langle a,b\rangle$ denotes the interval $[\min\{a,b\},\max\{a,b\}]$ for $a,b\in\mathbb{R}$.
Combining the previous bounds yields
\begin{align*}
S & \leq \E ({\bf 1}\{\tilde{R}_s(x,\eta)\geq \dist(x,\partial W) \} + {\bf 1} \{\tilde{R}_s(x, y, \eta)\geq \dist(x+y_s,\partial W) \}) \\
& \quad \quad \times \xi_s^{(i,j)}(x,y,\eta) {\bf 1}\{A_s(x,y,\eta)\}\\
& \quad + \E \sum_{(z,t, M_z)\in\eta} {\bf 1}\{t\in s\langle g(z),g(x)\rangle\} \xi_s^{(i,j)}(x,y,\eta) {\bf 1}\{A_s(x,y,\eta)\} \\
& \hspace{2.75cm} \times \big({\bf 1}\{\|z-x\|\leq \tilde{R}_s(x,\eta)\}+ {\bf 1}\{\|z-x-y_s\|\leq \tilde{R}_s(x,y,\eta)\} \big)\\
& =: S_1+S_2.
\end{align*}
Using the H\"older inequality together with \eqref{stab}, \eqref{stabStationary}, \eqref{mom}, \eqref{momStationary}, and \eqref{exp1}, we obtain 
\begin{align*}
S_1 \leq & 2 C_{mom,p}^{2/(6+p)} C_{stab}^{1/3} C_0^{1/3} \exp(-c_0\|y\|^d/3)\\
& \times \big( \exp(-c_{stab}s\dist(x,\partial W)^d/3) + \exp(-c_{stab}s\dist(x+y_s,\partial W)^d/3) \big).
\end{align*}
Let $\alpha\in(0,\infty)$. Using the triangle inequality and the inequality $|a-b|^d\geq |a|^d/2^{d-1}-|b|^d$ for $a,b\in\R$, which follows from convexity of $u\mapsto|u|^d$, we obtain 
\begin{align*}
\exp(-\alpha s\dist(x+y_s,\partial W)^d) & \leq \exp(-\alpha s|\dist(x,\partial W) -\|y_s\||^d)\\
& \leq \exp(-\alpha s \dist(x,\partial W)^d/2^{d-1} +\alpha \|y\|^d ).
\end{align*}
Since we can choose $\alpha$ sufficiently small, this implies that there exist constants $\tilde{C}_1,\tilde{c}_1\in(0,\infty)$ such that
\begin{equation}\label{S1}
S_1 \leq \tilde{C}_1 \exp(-\tilde{c}_1 s\dist(x,\partial W)^d) \exp(-\tilde{c}_1 \|y\|^d).
\end{equation}

For $S_2$ it follows from the Mecke formula, the assumed monotonicity of the radius of stabilization, and the H\"older inequality that
\begin{align*}
S_2 
& \leq 2\E\sum_{(z,t, M_z)\in\eta} {\bf 1}\{t\in s\langle g(z),g(x)\rangle\} \xi_s^{(i,j)}(x,y,\eta) {\bf 1}\{A_s(x,y,\eta)\} \\
& \qquad \qquad \qquad \times {\bf 1}\{\|z-x\|\leq \max\{\tilde{R}_s(x,\eta), \tilde{R}_s(x,y,\eta)+\|y_s\|\}\} \allowdisplaybreaks\\
& = 2\int_{\R^d}\int_0^\infty \int_{\MM} \E {\bf 1}\{t\in s\langle g(z),g(x)\rangle\}  \xi_s^{(i,j)}(x,y,\eta^{(z,t,m_z)}) {\bf 1}\{A_s(x,y,\eta^{(z,t,m_z)})\}\\
& \hskip 2cm \times {\bf 1}\{\|z-x\|\leq \max\{\tilde{R}_s(x,\eta^{(z,t,m_z)}), \tilde{R}_s(x,y, \eta^{(z,t,m_z)})+\|y_s\|\}\} \\
& \hskip 2cm  \times  \, { \Q_{\MM} (\dint m_z)} \, \dint t \, \dint z\allowdisplaybreaks\\
& \leq 2\int_{\R^d}\int_0^\infty\int_{\MM} \E {\bf 1}\{t\in s\langle g(z),g(x)\rangle\} \xi_s^{(i,j)}(x,y,\eta^{(z,t,m_z)}) {\bf 1}\{A_s(x,y,\eta)\} \\
& \qquad \qquad \qquad \times {\bf 1}\{\|z-x\|\leq \max\{\tilde{R}_s(x,\eta), \tilde{R}_s(x,y,\eta)+\|y_s\|\}\}
  \, { \Q_{\MM} (\dint m_z)} \, \dint t \, \dint z \allowdisplaybreaks\\
& \leq 2\int_{\R^d}\int_0^\infty {\bf 1}\{t\in s\langle g(z),g(x)\rangle\} \big( \E \xi_s^{(i,j)}(x,y,\eta^{(z,t,M_z)})^3 \big)^{1/3} \mathbb{P}(A_s(x,y,\eta))^{1/3} \\
& \qquad \qquad \qquad \times \mathbb{P}(\|z-x\|\leq \max\{\tilde{R}_s(x,\eta), \tilde{R}_s(x,y,\eta)+\|y_s\|\})^{1/3}
 \, \dint t \, \dint z.
\end{align*}
From \eqref{mom} and \eqref{momStationary} we know that
$$
\E \xi_s^{(i,j)}(x,y,\eta\cup\{(z,t,M_z)\})^3 \leq 8 C_{mom,p}^{6/(6+p)}.
$$
By \eqref{stab} and \eqref{stabStationary} we obtain
\begin{align*}
& \mathbb{P}(\|z-x\|\leq \max\{\tilde{R}_s(x,\eta), \tilde{R}_s(x,y,\eta)+\|y_s\|\}) \\
& \leq \mathbb{P}( \tilde{R}_s(x,\eta) \geq \|z-x\|) + \mathbb{P}( \tilde{R}_s(x,y,\eta) \geq \|z-x\|/2) + \mathbf{1}\{\|z-x\| /2 \leq \|y_s\| \} \\
& \leq 5 C_{stab} \exp(-c_{stab} s \|z-x\|^d/2^d) +  \mathbf{1}\{ \|z-x\| \leq 2 s^{-1/d} \|y\| \}.
\end{align*}
Together with \eqref{exp1} these estimates imply that
\begin{align*}
S_2 & \leq \int_{\R^d} 4 C_{mom,p}^{2/(6+p)} C_0^{1/3} \int_0^\infty {\bf 1}\{t\in s\langle g(z),g(x)\rangle\}  \, \dint t \ \exp(-c_0\|y\|^d/3) \\
& \qquad \qquad \times \big( 2 C_{stab}^{1/3} \exp(-c_{stab} s \|z-x\|^d/(3\cdot 2^d)) +  \mathbf{1}\{\|z-x\|\leq 2 s^{-1/d} \|y\| \} \big)  \, \dint z.
\end{align*}
The Lipschitz continuity of $g$ at \eqref{Lip} (including the definition of $L$ there) as well as substitution and spherical coordinates yield that
\begin{align*}
S_2 & \leq 4 C_{mom,p}^{2/(6+p)} C_0^{1/3} \exp(-c_0\|y\|^d/3)  \\
& \quad \times  \int_{\R^d} L s \|z-x\| \big( 2 C_{stab}^{1/3} \exp(-c_{stab} s \|z-x\|^d/(3\cdot 2^d)) +  \mathbf{1}\{\|z-x\|\leq 2 s^{-1/d} \|y\| \} \big)  \, \dint z\\
& = 4 C_{mom,p}^{2/(6+p)} C_0^{1/3}L  \exp(-c_0\|y\|^d/3)\\
& \quad   \times  s^{-1/d} d\kappa_d  \int_0^\infty u^d \big( 2 C_{stab}^{1/3} \exp(-c_{stab} u^d/(3\cdot 2^d)) +  \mathbf{1}\{u \leq 2 \|y\| \} \big)  \, \dint u.
\end{align*}
Thus, there exist constants $\tilde{C}_2,\tilde{c}_2\in(0,\infty)$ such that
\begin{equation}\label{S2}
S_2 \leq \tilde{C}_2 \exp(-\tilde{c}_2 \|y\|^d) ( 1 + \|y\|^{d+1} ) s^{-1/d}.
\end{equation}
Combining \eqref{S1} and \eqref{S2} completes the proof.
\end{proof}

\begin{lemm}\label{lem:boundJ1}
There exist constants $C_2,c_2\in(0,\infty)$ such that for all $i,j\in\{1,\hdots,m\}$, $x\in W$, and $s\geq 1$,
\begin{align*}
& \big| \E \xi_s^{(i)}(\wx, \P_{sg}) \, \xi_s^{(j)}(\wx, \P_{sg})  -\E \xi_s^{(i)}(\wx, \P_{sg(x)}) \, \xi_s^{(j)}(\wx, \P_{sg(x)})\big|\\
& \leq C_2 \big( s^{-1/d} + \exp(- c_2s\dist(x,\partial W)^d)\big).
\end{align*}
\end{lemm}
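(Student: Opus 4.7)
The plan is to mimic the proof of Lemma \ref{lem:Covdiff} in the simpler one-point setting, where neither the auxiliary $y$-variable nor the indicator $\mathbf{1}\{A_s(x,y,\eta)\}$ is present. Using the coupling of $\P_{sg}$ and $\P_{sg(x)}$ through the marked Poisson process $\eta$, define
\[
U_s(x,\eta):=\{\P_{sg}\cap \widehat{B}^d(x,R_s(\wx,\P_{sg}))\neq \P_{sg(x)}\cap \widehat{B}^d(x,R_s(\wx,\P_{sg(x)}))\}.
\]
On the event $U_s(x,\eta)^c$, the stabilization property \eqref{rstab} forces the products of scores under $\P_{sg}$ and $\P_{sg(x)}$ to agree, so the left-hand side $T$ of the lemma is bounded by
\[
T\leq \mathbb{E}\mathbf{1}\{U_s(x,\eta)\}\bigl(|\xi_s^{(i)}(\wx,\P_{sg})\xi_s^{(j)}(\wx,\P_{sg})|+|\xi_s^{(i)}(\wx,\P_{sg(x)})\xi_s^{(j)}(\wx,\P_{sg(x)})|\bigr).
\]

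Next, using the intrinsic stabilization property \eqref{eqn:Rsinside} and the monotonicity of $R_s$ exactly as in the proof of Lemma \ref{lem:Covdiff}, I would decompose
\[
\mathbf{1}\{U_s(x,\eta)\}\leq \mathbf{1}\{\tilde R_s(x,\eta)\geq \dist(x,\partial W)\}+\sum_{(z,t,M_z)\in\eta}\mathbf{1}\{t\in s\langle g(z),g(x)\rangle\}\mathbf{1}\{\|z-x\|\leq \tilde R_s(x,\eta)\}.
\]
For the boundary term, H\"older's inequality combined with the moment conditions \eqref{mom}, \eqref{momStationary} and the exponential stabilization bounds \eqref{stab}, \eqref{stabStationary} yields a contribution of order $\exp(-c\,s\dist(x,\partial W)^d)$, accounting for the second summand in the stated bound.

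For the sum, I would apply the multivariate Mecke formula, use monotonicity to replace $\tilde R_s(x,\eta\cup\{(z,t,m_z)\})$ by $\tilde R_s(x,\eta)$, and apply H\"older to decouple the score products from the indicator terms. The $t$-integral over $s\langle g(z),g(x)\rangle$ has length $s|g(z)-g(x)|\leq sL\|z-x\|$ by Lipschitz continuity of $g$; combining with exponential stabilization to control $\mathbb{P}(\tilde R_s(x,\eta)\geq \|z-x\|)$ and passing to spherical coordinates yields an integral
\[
s^{-1/d}d\kappa_d\int_0^\infty u^{d}\exp(-c_{stab}u^d/2^d)\,\mathrm{d}u<\infty
\]
after the substitution $u=s^{1/d}\|z-x\|$, producing the factor $s^{-1/d}$. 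Summing this with the boundary contribution gives the claimed estimate. The main obstacle, as in the previous lemma, is bookkeeping the various H\"older exponents so that the moment bounds \eqref{mom}, \eqref{momStationary} and the stabilization bounds \eqref{stab}, \eqref{stabStationary} can be simultaneously invoked with constants that are uniform in $s$ and $x$; no new analytic ingredient beyond the Lipschitz gain $|g(z)-g(x)|\leq L\|z-x\|$ is required.
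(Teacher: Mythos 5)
Your proposal is correct and follows essentially the same route as the paper: the paper's proof also reduces the left-hand side to $\E\,\mathbf{1}\{U_s(x,\eta)\}\bar\xi_s^{(i,j)}(x,\eta)$ and then invokes verbatim the estimate of the analogous quantity (the bound of $S$ at \eqref{BoundS}) in the proof of Lemma \ref{lem:Covdiff}, which is exactly the boundary-plus-Mecke decomposition, H\"older, and Lipschitz/spherical-coordinates computation you carry out.
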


\begin{proof}
Using the abbreviation
\begin{align*}
& \bar\xi_s^{(i{, j})}(x, \eta):=|\xi_s^{(i)}(\wx, \P_{sg}) \, \xi_s^{(j)}(\wx, \P_{sg})|  +|\xi_s^{(i)}(\wx, \P_{sg(x)}) \, \xi_s^{(j)}(\wx, \P_{sg(x)})|,
\end{align*}
we see that
\begin{align*}
& \big| \E \xi_s^{(i)}(\wx, \P_{sg}) \, \xi_s^{(j)}(\wx, \P_{sg})  -\E \xi_s^{(i)}(\wx, \P_{sg(x)}) \, \xi_s^{(j)}(\wx, \P_{sg(x)})\big|\\
& \leq \E {\bf 1}\{ \P_{sg}\cap \wB^d({x},R_s(\wx,\P_{sg})) \neq \P_{sg(x)} \cap \wB^d({x},R_s(\wx,\P_{sg(x)}))\} \, \bar{\xi}_s^{(i{, j})}(x,\eta).
\end{align*}
Estimating the right-hand side similarly as the right-hand side of \eqref{BoundS} in the proof of Lemma \ref{lem:Covdiff} gives the claimed bound.
\end{proof}

\begin{lemm} \label{lem:Covdiff2}
There exist constants $C_3,c_3\in(0,\infty)$ such that for {all $i\in\{1,\hdots,m\}$, $x\in W$, $y\in\R^d$, and $s\geq 1$ with $x+y_s\in W$},
\begin{align*}
& \big| \E \xi_s^{(i)}(\wx, \P_{sg}) \, {\bf 1}\{A_s^{{(1)}}(x,y,\eta)\}  -\E \xi_s^{(i)}(\wx, \P_{sg(x)}) \, {\bf 1}\{A_s^{{(1)}}(x,y,\eta)\}\big|\\
& \leq C_3 \big( s^{-1/d} + \exp(-c_3s \dist(x,\partial W)^d)\big) \exp(-c_3\|y\|^d)
\end{align*}
and
\begin{align*}
& \big| \E \xi_s^{(i)}(\wx, \P_{sg}) \, {\bf 1}\{A_s^{{(1)}}(x,y,\eta)^c\}  -\E \xi_s^{(i)}(\wx, \P_{sg(x)}) \, {\bf 1}\{A_s^{{(1)}}(x,y,\eta)^c\}\big|\\
& \leq C_3 \big( s^{-1/d} + \exp(-c_3s \dist(x,\partial W)^d)\big).
\end{align*}
\end{lemm}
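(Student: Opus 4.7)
My plan is to mimic the strategy of Lemma \ref{lem:Covdiff} and Lemma \ref{lem:boundJ1}, using the coupling of $\P_{sg}$ and $\P_{sg(x)}$ through $\eta$. Define the mismatch event
$$
U_s(x,\eta) := \{ \P_{sg}\cap \wB^d(x, R_s(\wx,\P_{sg})) \neq \P_{sg(x)} \cap \wB^d(x, R_s(\wx,\P_{sg(x)})) \}.
$$
By the defining property \eqref{rstab} of a radius of stabilization, on $U_s(x,\eta)^c$ we have $\xi_s^{(i)}(\wx,\P_{sg}) = \xi_s^{(i)}(\wx,\P_{sg(x)})$, so that for either event $B = A_s^{(1)}(x,y,\eta)$ or $B = A_s^{(1)}(x,y,\eta)^c$ we obtain
$$
\big| \E \xi_s^{(i)}(\wx, \P_{sg}) \mathbf{1}\{B\} - \E \xi_s^{(i)}(\wx, \P_{sg(x)}) \mathbf{1}\{B\} \big| \leq \E \mathbf{1}\{U_s(x,\eta)\} \bar\xi_s^{(i)}(x,\eta) \mathbf{1}\{B\}
$$
with $\bar\xi_s^{(i)}(x,\eta):=|\xi_s^{(i)}(\wx,\P_{sg})| + |\xi_s^{(i)}(\wx,\P_{sg(x)})|$. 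Exactly as in the proof of Lemma \ref{lem:Covdiff}, the indicator of $U_s(x,\eta)$ is dominated by a boundary term plus a sum over points of $\eta$ where $t$ lies between $sg(z)$ and $sg(x)$ and $\|z-x\| \le \tilde R_s(x,\eta)$.

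For the first inequality, the boundary term contributes
$$
\E \mathbf{1}\{\tilde R_s(x,\eta) \ge \max(\dist(x,\partial W),\|y_s\|/2)\}\, \bar\xi_s^{(i)}(x,\eta).
$$
Applying Hölder, the moment bounds \eqref{mom}-\eqref{momStationary}, and exponential stabilization \eqref{stab}-\eqref{stabStationary}, this is at most a constant times $\exp\bigl(-c\,s\max\{\dist(x,\partial W)^d, \|y_s\|^d/2^d\}\bigr)$, and by the convexity inequality $\max(a^d,b^d) \ge (a^d+b^d)/2$ this splits as $C\exp(-c's\dist(x,\partial W)^d)\exp(-c'\|y\|^d)$. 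For the sum term, the Mecke formula and the assumed monotonicity of $R_s$ (which lets us upper bound the indicators of $\|z-x\|\le \tilde R_s(x,\eta^{(z,t,m_z)})$ and of $A_s^{(1)}(x,y,\eta^{(z,t,m_z)})$ by those with $\eta$ in place of $\eta^{(z,t,m_z)}$) together with a Hölder estimate using exponents $(3,3,3)$ bound the contribution by
$$
C \exp(-c\|y\|^d) \int_{\R^d} s\,|g(z)-g(x)|\,\exp(-c's\|z-x\|^d)\, \dint z,
$$
which, via the Lipschitz property of $g$ and the substitution $u = s^{1/d}(z-x)$, is of order $s^{-1/d}\exp(-c\|y\|^d)$; here the factor $\exp(-c\|y\|^d)$ comes from $\mathbb{P}(A_s^{(1)}(x,y,\eta)) \le C\exp(-c\|y\|^d)$ via \eqref{stab}-\eqref{stabStationary}.

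For the second inequality, I simply drop the indicator of $A_s^{(1)c}$ (replacing it by $1$), at which point the problem reduces to estimating $\E \mathbf{1}\{U_s(x,\eta)\}\bar\xi_s^{(i)}(x,\eta)$, which is precisely the quantity controlled in the proof of Lemma \ref{lem:boundJ1} by $C(s^{-1/d}+\exp(-cs\dist(x,\partial W)^d))$. The main technical point is to keep track, when applying Mecke and Hölder, of the fact that $A_s^{(1)}$ depends only on the stabilization radii at $x$ (and not at $x+y_s$), so that the factor $\exp(-c\|y\|^d)$ in the first bound comes exclusively from tail estimates on $\tilde R_s(x,\eta)$; this is precisely the feature that forbids the same gain in the second bound and, together with the use of the monotonicity of $R_s$ to avoid re-adding the Palm point into the stabilization event, constitutes the only subtlety compared with Lemma \ref{lem:Covdiff}.
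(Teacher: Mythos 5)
Your proof is correct and follows essentially the same route as the paper: you bound each difference by $\E\,\mathbf{1}\{U_s(x,\eta)\}\,\widehat{\xi}_s^{(i)}(x,\eta)\,\mathbf{1}\{B\}$ with $B$ the respective stabilization event, then estimate as in Lemma~\ref{lem:Covdiff}'s treatment of $S_1$ and $S_2$, which is precisely what the paper's terse proof instructs. The minor cosmetic difference is that you fold $\mathbf{1}\{A_s^{(1)}\}$ into the boundary indicator and split $\max$ by the inequality $\max(a,b)\geq(a+b)/2$, whereas the paper's parallel step keeps $\mathbb{P}(A_s^{(1)})^{1/3}$ as a separate Hölder factor; both work, and your observation that only the radius at $x$ enters (hence no $\|y\|^{d+1}$ factor) correctly explains why this bound is cleaner than the one in Lemma~\ref{lem:Covdiff}.
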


\begin{proof}
Using the notation $\widehat{\xi}_s^{(i)}(x,\eta):=|\xi_s^{(i)}(\wx, \P_{sg})|+|\xi_s^{(i)}(\wx, \P_{sg(x)})|$, we have that
\begin{align*}
& \big| \E \xi_s^{(i)}(\wx, \P_{sg}) \, {\bf 1}\{A_s^{{(1)}}(x,y,\eta)\}  -\E \xi_s^{(i)}(\wx, \P_{sg(x)}) \, {\bf 1}\{A_s^{{(1)}}(x,y,\eta)\}\big|\\
& \leq \E {\bf 1}\{ \P_{sg}\cap \wB^d({x},R_s(\wx,\P_{sg}))  \neq \P_{sg(x)} \cap \wB^d({x},R_s(\wx,\P_{sg(x)}))\} \, \widehat{\xi}_s^{(i)}(x,\eta) \, {\bf 1}\{A_s^{{(1)}}(x,y,\eta)\}
\end{align*}
and
\begin{align*}
& \big| \E \xi_s^{(i)}(\wx, \P_{sg}) \, {\bf 1}\{A_s^{{(1)}}(x,y,\eta)^c\}  -\E \xi_s^{(i)}(\wx, \P_{sg(x)}) \, {\bf 1}\{A_s^{{(1)}}(x,y,\eta)^c\}\big|\\
& \leq \E {\bf 1}\{ \P_{sg}\cap \wB^d({x},R_s(\wx,\P_{sg}))  \neq \P_{sg(x)} \cap \wB^d({x},R_s(\wx,\P_{sg(x)}))\} \, \widehat{\xi}_s^{(i)}(x,\eta).
\end{align*}
Estimating the right-hand sides similarly as the right-hand side of \eqref{BoundS} in the proof of Lemma \ref{lem:Covdiff} completes the proof.
\end{proof}

\begin{lemm}\label{lem:Covdiff3}
{For any $u>0$ there exist constants $C_4\in(0,\infty)$ and $c_4\in(0,u)$} such that for {all $i\in\{1,\hdots,m\}$, $x\in W$, $y\in\R^d$, and $s\geq 1$ with $x+y_s\in W$},
\begin{align*}
& \big| \E \xi_s^{(i)}(\wxy,  \P_{sg}) \, {\bf 1}\{A_s^{{(2)}}(x,y,\eta)\}  -\E \xi_s^{(i)}(\wxy, \P_{sg(x)}) \, {\bf 1}\{A_s^{{(2)}}(x,y,\eta)\}\big|\\
& {\leq C_4 \big( s^{-1/d} + s^{-1/d} \|y\|^{d+1}+\exp(-c_4s\dist(x,\partial W)^d)\big) \exp(-c_4\|y\|^d)}
\end{align*}
and
\begin{align*}
& \big| \E \xi_s^{(i)}(\wxy, \P_{sg}) -\E \xi_s^{(i)}(\wxy, \P_{sg(x)}) \big|\\
& {\leq C_4 \big( s^{-1/d}+ s^{-1/d}\|y\|^{d+1} + \exp(-c_4s\dist(x,\partial W)^d + 2^{d-1}c_4\|y\|^d)\big)}.
\end{align*}
\end{lemm}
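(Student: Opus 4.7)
My plan is to follow the template of Lemmas \ref{lem:Covdiff} and \ref{lem:Covdiff2}, adapted to a score evaluated at the single point $\wxy$ rather than at a pair of points. Introduce the score envelope
\[
\widehat{\xi}_s^{(i)}(x, y, \eta) := |\xi_s^{(i)}(\wxy, \P_{sg})| + |\xi_s^{(i)}(\wxy, \P_{sg(x)})|
\]
and the disagreement event
\[
V_s := \{\P_{sg} \cap \wB^d(x + y_s, R_s(\wxy, \P_{sg})) \neq \P_{sg(x)} \cap \wB^d(x + y_s, R_s(\wxy, \P_{sg(x)}))\}.
\]
By the defining property \eqref{rstab} of the radius of stabilization, the two scores agree on $V_s^c$, so part (a) reduces to bounding $\E \mathbf{1}\{V_s\} \widehat{\xi}_s^{(i)}(x, y, \eta) \mathbf{1}\{A_s^{(2)}\}$ and part (b) to bounding $\E \mathbf{1}\{V_s\} \widehat{\xi}_s^{(i)}(x, y, \eta)$. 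As in the proof of Lemma \ref{lem:Covdiff}, the indicator $\mathbf{1}\{V_s\}$ is dominated by a boundary term $\mathbf{1}\{\tilde{R}_s(x, y, \eta) \geq \dist(x + y_s, \partial W)\}$ plus an $\eta$-sum $\sum_{(z,t,M_z) \in \eta} \mathbf{1}\{t \in s\langle g(z), g(x)\rangle\} \mathbf{1}\{\|z - x - y_s\| \leq \tilde{R}_s(x, y, \eta)\}$, yielding contributions $T_1$ and $T_2$.

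For $T_1$ I would apply H\"older's inequality with three factors, bounding the score envelope via \eqref{mom} and \eqref{momStationary}, the indicator $\mathbf{1}\{A_s^{(2)}\}$ (when present, in part (a)) via \eqref{stab} and \eqref{stabStationary}, and the boundary indicator via exponential stabilization \eqref{stab}, \eqref{stabStationary}. The resulting factor $\exp(-c s \dist(x + y_s, \partial W)^d)$ is converted using the convexity inequality $|a - b|^d \geq a^d/2^{d-1} - b^d$ to $\exp(-c s \dist(x, \partial W)^d / 2^{d-1}) \exp(c \|y\|^d)$. In part (a) the factor $\exp(-c_{stab} \|y\|^d/(3\cdot 2^d))$ coming from $\mathbf{1}\{A_s^{(2)}\}$ dominates $\exp(c\|y\|^d)$ after choosing $c$ small enough; in part (b) I set $c_4 := c/2^{d-1}$, recording the leftover as $\exp(-c_4 s \dist(x, \partial W)^d + 2^{d-1} c_4 \|y\|^d)$, which matches the stated form. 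For $T_2$ I would apply the Mecke formula and use monotonicity of $R_s$ to replace $\eta^{(z,t,m_z)}$ by $\eta$ inside the radius- and $A_s^{(2)}$-indicators, then apply H\"older. The Lipschitz bound $\int_0^\infty \mathbf{1}\{t \in s\langle g(z), g(x)\rangle\} \, \dint t \leq s L \|z - x\|$, the estimate $\PP(\|z - x - y_s\| \leq \tilde{R}_s(x, y, \eta)) \leq 3 C_{stab} \exp(-c_{stab} s \|z - x - y_s\|^d)$ from \eqref{stab}--\eqref{stabStationary}, and the substitution $u := s^{1/d}(z - x - y_s)$ together with $\|z - x\| \leq s^{-1/d}(\|u\| + \|y\|)$ yield $T_2 \leq C s^{-1/d}(1 + \|y\|) \exp(-c\|y\|^d)$ in part (a), which is absorbed into the claimed bound, and $T_2 \leq C s^{-1/d}(1 + \|y\|) \leq C(s^{-1/d} + s^{-1/d} \|y\|^{d+1})$ in part (b).

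The main obstacle is the bookkeeping of the three radii composing $\tilde{R}_s(x, y, \eta)$. In particular, the third radius $R_s(\wxy - y_s, \P_{sg(x)} - y_s)$ involves a translated marked stationary Poisson process and must be reduced to \eqref{stabStationary} via translation invariance: $\P_{sg(x)} - y_s$ has the same distribution as $\P_{sg(x)}$, and $\wxy - y_s$ amounts to $(x, M_{x+y_s})$ with $M_{x+y_s}\sim\Q_\MM$. A related subtlety is the parallel-but-distinct deployment of H\"older's inequality in parts (a) and (b): in (a) one spends a H\"older factor on $\mathbf{1}\{A_s^{(2)}\}$ to extract the $\exp(-c_4 \|y\|^d)$ prefactor, while in (b) the $\|y\|^d$ growth is instead carried inside the exponent of the boundary term, giving the stated $2^{d-1} c_4 \|y\|^d$. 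The prescribed freedom $c_4 \in (0, u)$ is easily accommodated since $c_4$ ultimately arises as a small multiple of $c_{stab} \wedge c_0$ and can be shrunk arbitrarily.
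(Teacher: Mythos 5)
Your proposal is correct and takes essentially the same route as the paper. The paper's own proof of this lemma is very terse: it records the domination of the difference by $\E \mathbf{1}\{V_s\}\, \widehat{\xi}_s^{(i)}(x,y,\eta)\,[\mathbf{1}\{A_s^{(2)}\}]$ (exactly your reduction) and then says "estimating the right-hand sides similarly as the right-hand side of \eqref{BoundS} in the proof of Lemma \ref{lem:Covdiff} completes the proof," which is precisely the $T_1 + T_2$ decomposition, the Mecke/monotonicity/H\"older/co-area machinery and the convexity inequality that you spell out, including the observation that the $\exp(c\|y\|^d)$ leftover from translating $\dist(x+y_s,\partial W)$ to $\dist(x,\partial W)$ is absorbed by the $A_s^{(2)}$ factor in part (a) but must remain in the exponent in part (b), and that the stationarity reduction handles the third radius $R_s(\wxy - y_s, \P_{sg(x)} - y_s)$.
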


\begin{proof}
Using the notation $$\tilde{\xi}_s^{(i)}(x,y,\eta):=|\xi_s^{(i)}(\wxy, \P_{sg})|+|\xi_s^{(i)}(\wxy, \P_{sg(x)})|,$$ we have that
\begin{align*}
& \big| \E \xi_s^{(i)}(\wxy, \P_{sg}) \, {\bf 1}\{A_s^{{(2)}}(x,y,\eta)\}  -\E \xi_s^{(i)}(\wxy, \P_{sg(x)}) \, {\bf 1}\{A_s^{{(2)}}(x,y,\eta)\}\big|\\
& \leq \E {\bf 1}\{ \P_{sg}\cap \wB^d({x+y_s},R_s(\wxy,\P_{sg}))\neq \P_{sg(x)} \cap \wB^d({x+y_s}, R_s(\wxy, \P_{sg(x)}))\} \\
& \hskip 1cm \times \tilde{\xi}_s^{(i)}(x,y,\eta) \, {\bf 1}\{A_s^{{(2)}}(x,y,\eta)\}
\end{align*}
and
\begin{align*}
& \big| \E \xi_s^{(i)}(\wxy, \P_{sg}) -\E \xi_s^{(i)}(\wxy, \P_{sg(x)}) \big|\\
& \leq \E {\bf 1}\{ \P_{sg}\cap \wB^d({x+y_s},R_s(\wxy,\P_{sg})) \\
& \hskip 1cm \neq \P_{sg(x)} \cap B^d({x+y_s},R_s(\wxy,\P_{sg(x)}))\} \, \tilde{\xi}_s^{(i)}(x,y,\eta).
\end{align*}
Estimating the right-hand sides similarly as the right-hand side of \eqref{BoundS} in the proof of Lemma \ref{lem:Covdiff} completes the proof.
\end{proof}

For $W=\mathbb{R}^d$ we have $\dist(x,\partial W)=\infty$ for all $x\in W$ so that the corresponding exponential expressions in the previous lemmas vanish.

Our final lemma is a consequence of Lemma 5.12 of \cite{LSY}, together with the assumptions \eqref{eqn:AssumptionW} and \eqref{eqn:AssumptionAis}. We denote by $\mathcal{H}^{d-1}$ the $(d-1)$-dimensional Hausdorff measure.

\begin{lemm}  \label{LSY}
(a) For any measurable and bounded $A\subseteq W$ there exists a constant $C_A\in(0,\infty)$ such that
$$
{\cal H}^{d-1}(\{x\in A: \dist(x,\partial W)=r\}) \leq C_A (1 + r^{d-1}), \quad r > 0.
$$
\noindent(b) For any $i\in\{1,\hdots,m\}$ there exists a constant $\widetilde{C}_{A_i}\in(0,\infty)$ such that
$$
{\cal H}^{d-1}(\{x\in \R^d: \dist(x,\partial A_i)=r\}) \leq \widetilde{C}_{A_i} (1 + r^{d-1}), \quad r > 0.
$$
\end{lemm}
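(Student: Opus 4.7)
The plan is to derive both statements from \cite[Lemma 5.12]{LSY}, applied to the 1-Lipschitz distance functions $x \mapsto \dist(x,\partial W)$ and $x \mapsto \dist(x,\partial A_i)$, after verifying that their hypotheses are supplied by \eqref{eqn:AssumptionW} and \eqref{eqn:AssumptionAis} respectively. Part (b) is verbatim the same as part (a) with $\dist(x,\partial A_i)$ in place of $\dist(x,\partial W)$ and \eqref{eqn:AssumptionAis} in place of \eqref{eqn:AssumptionW}, so I will describe only (a).

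First I would observe that $f(x):=\dist(x,\partial W)$ is 1-Lipschitz on $\R^d$, so the Eilenberg coarea inequality gives
\[\int_0^r \mathcal{H}^{d-1}(\{x\in A: f(x)=t\})\,\dint t \;\leq\; \lambda_d(\{x\in A: f(x)\leq r\}), \quad r>0.\]
Assumption \eqref{eqn:AssumptionW} says exactly that the right-hand side grows at most linearly in $r$ for small $r$: there exist $M_A\in(0,\infty)$ and $r_0>0$ with $\lambda_d(\{x\in A: f(x)\leq r\})\leq M_A r$ for $r\in(0,r_0]$, while boundedness of $A$ gives $\lambda_d(\{x\in A: f(x)\leq r\})\leq \lambda_d(A)$ for all $r$. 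Dividing and letting the interval of integration shrink produces an average-in-$r$ bound $\mathcal{H}^{d-1}(\{x\in A: f(x)=r\})\leq C_A$ for almost every small $r$; it is precisely the role of \cite[Lemma 5.12]{LSY} to upgrade this to the pointwise-in-$r$ estimate by exploiting the Lipschitz comparison of nearby level sets. For the regime $r\geq r_0$, boundedness of $A$ confines $\{x\in A:f(x)=r\}$ to a ball of fixed radius, inside which a 1-Lipschitz level set has $(d-1)$-measure at most of order $r^{d-1}$ by a standard covering argument; combining the two regimes gives the claimed bound $C_A(1+r^{d-1})$.

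The main obstacle is precisely this passage from the average-in-$r$ bound delivered by the coarea inequality to the pointwise-in-$r$ bound asserted in the statement. I would not reprove that step here but defer to \cite[Lemma 5.12]{LSY}, since the remainder of the argument is either the Lipschitz property of the distance function or a direct reading of \eqref{eqn:AssumptionW} and \eqref{eqn:AssumptionAis}. Part (b) requires no additional idea, only the indicated substitution.
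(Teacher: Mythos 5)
Your proposal coincides with the paper's own treatment: the paper proves Lemma \ref{LSY} by a single sentence, namely that it \emph{is a consequence of Lemma 5.12 of \cite{LSY}, together with the assumptions \eqref{eqn:AssumptionW} and \eqref{eqn:AssumptionAis}}, and that is exactly the move you make. Your intermediate narrative (coarea inequality, average-to-pointwise upgrade, $r\ge r_0$ covering bound) is a speculative reconstruction of what \cite[Lemma~5.12]{LSY} does rather than something the paper spells out, but since you explicitly defer the technical core to that cited lemma the substance and the approach are the same.
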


\begin{proof}[Proof of Proposition \ref{covdiff}]
Throughout we use the shorthand notations $g_i:=f_i\cdot g$ and $g_{ij}:= f_i \cdot f_j\cdot g$ for $i,j\in\{1,\hdots,m\}$. Note that $g_i\in\operatorname{Lip}(A_i)$ and $g_{ij}\in\operatorname{Lip}(A_i\cap A_j)$ for $i,j\in\{1,\hdots,m\}$. We use the multivariate Mecke formula to rewrite  $\Cov \left(  \langle \bar{\mu}^{(i)}_s, f_i \rangle,   \langle \bar{\mu}^{(j)}_s, f_j \rangle \right)/s$ as
\begin{align*}
& \frac{\Cov \left(  \langle \bar{\mu}^{(i)}_s, f_i \rangle,   \langle \bar{\mu}^{(j)}_s, f_j \rangle \right)}{s} \\
& = \int_{A_i\cap A_j} \E \xi_s^{(i)}(\wx,\mathcal{P}_{sg}) \, \xi_s^{(j)}(\wx,\mathcal{P}_{sg}) \, {f_i(x)\, f_j(x)} \, g(x) \, \dint x\\
 & \quad + s \int_{A_i}\int_{A_j} \big( \E \xi_s^{(i)}(\wx,\mathcal{P}_{sg}^{\wy}) \, \xi_s^{(j)}(\wy,\mathcal{P}_{sg}^{\wx})- \E \xi_s^{(i)}(\wx,\mathcal{P}_{sg}) \, \E\xi_s^{(j)}(\wy,\mathcal{P}_{sg})\big) \\
& \hskip 2.5cm \times {f_i(x) \, f_j(y)} \, g(x) \, g(y) \,\dint y \, \dint x  \allowdisplaybreaks\\
& = \int_{A_i\cap A_j} \E \xi_s^{(i)}(\wx,\mathcal{P}_{sg}) \, \xi_s^{(j)}(\wx,\mathcal{P}_{sg}) \, {g_{ij}(x)} \, \dint x\\
 & \quad + \int_{A_i}\int_{s^{1/d}(A_j-x)} \big( \E \xi_s^{(i)}(\wx,\mathcal{P}_{sg}^{\wxy}) \, \xi_s^{(j)}(\wxy,\mathcal{P}_{sg}^{\wx})- \E \xi_s^{(i)}(\wx,\mathcal{P}_{sg}) \, \E\xi_s^{(j)}(\wxy,\mathcal{P}_{sg})\big)\\
 & \hskip 2.5cm \times g_i(x) \, g_j(x+y_s) \,\dint y \, \dint x\\
 & =: J_1+J_2.
\end{align*}
We begin by comparing $J_1$ with the first integral in \eqref{eqn:sigmaij}. It follows from {\eqref{xis} and} Lemma \ref{lem:boundJ1} that
\begin{align*}
& \big| J_1 - \int_{A_i\cap A_j} \E\xi^{(i)}(\wx,\P_{g(x)}) \, \xi^{(j)}(\wx, \P_{g(x)}) \, {g_{ij}(x)} \, \dint x \big|\\
& {=\big| J_1 - \int_{A_i\cap A_j} \E\xi_s^{(i)}(\wx,\P_{sg(x)}) \, \xi_s^{(j)}(\wx, \P_{sg(x)}) \, g_{ij}(x) \, \dint x \big|}\\
& \leq \int_{A_i\cap A_j} \big| \E \xi_s^{(i)}(\wx,\mathcal{P}_{sg}) \, \xi_s^{(j)}(\wx,\mathcal{P}_{sg}) -{\E\xi_s^{(i)}(\wx,\P_{sg(x)}) \, \xi_s^{(j)}(\wx, \P_{sg(x)})} \big| \, {|g_{ij}(x)|} \, \dint x\\
& \leq {\sup_{x\in A_i\cap A_j }|g_{ij}(x)|} \bigg( C_2 \operatorname{Vol}(A_i\cap A_j) s^{-1/d} + C_2 \int_{A_i\cap A_j} \exp(-c_2s \dist(x,\partial W)^d) \, \dint x \bigg).
\end{align*}
Now applying the co-area formula and Lemma \ref{LSY}{(a)} we obtain
\begin{equation}\label{eqn:BoundIntegralBoundary}
\begin{split}
 \int_{A_i\cap A_j} \exp(-c_2s \dist(x,\partial W)^d) \, \dint x
& \leq \int_0^{\infty} \int_{{\{x\in A_i\cap A_j: \dist(x,\partial W)=r\}}} \exp( -c_2sr^d) \, {\cal H}^{d-1}(\dint y) \, \dint r \\
& \leq {C_{A_i\cap A_j}}  \int_0^{\infty} \exp(-c_2sr^d) (1 + r^{d-1}) \, \dint r \\
& = {C_{A_i\cap A_j}} \int_0^{\infty} \exp( -c_2 u^d) (1 + (u/s^{1/d})^{d-1} ) s^{-1/d} \, \dint u \\
& \leq {C_{A_i\cap A_j} \int_0^{\infty} \exp( -c_2 u^d) (1 + u^{d-1} )  \, \dint u \ s^{-1/d}}
\end{split}
\end{equation}
for $s\geq 1$. Thus, there exists a constant $C\in(0,\infty)$ such that
$$
\big| J_1 - \int_{A_i\cap A_j} \E\xi^{(i)}(\wx,\P_{g(x)}) \, \xi^{(j)}(\wx, \P_{g(x)}) \, {g_{ij}(x)} \, \dint x \big| \leq C  s^{-1/d}, \quad s \geq 1. 
$$

Next we consider $J_2$. {For all $s \geq 1$, $x\in W$, and $y\in\R^d$ with $x+y_s\in W$, the independence of $A_s^{(1)}(x,y,\eta)^c$ and $A_s^{(2)}(x,y,\eta)^c$  (recall \eqref{eqn:Rsinside}) and the definition of the radius of stabilization in \eqref{rstab} yield that
$$
\xi_s^{(i)}(\wx, \P_{sg}^{\wxy}) \, {\bf 1}\{A_s^{(1)}(x,y,\eta)^c\} \quad \text{and} \quad \xi_s^{(j)}(\wxy, \P_{sg}^{\wx}) \, {\bf 1}\{A_s^{(2)}(x,y,\eta)^c\}
$$
are independent. This implies that
\begin{align*}
& \E \xi_s^{(i)}(\wx, \P_{sg}^{\wxy}) \, \xi_s^{(j)}(\wxy, \P_{sg}^{\wx}) \, {\bf 1}\{A_s(x,y,\eta)^c\}\\
& = \E \xi_s^{(i)}(\wx, \P_{sg}) \, {\bf 1}\{A_s^{(1)}(x,y,\eta)^c\} \, \E \xi_s^{(j)}(\wxy, \P_{sg}) \, {\bf 1}\{A_s^{(2)}(x,y,\eta)^c\}.
\end{align*}}
{By inserting indicator functions, $J_2$ thus breaks into three integrals as follows:}
\begin{align*}
J_2 & =\int_{A_i}  \int_{s^{1/d}(A_j-x)} \E \xi_s^{(i)}(\wx, \P_{sg}^{\wxy}) \, \xi_s^{(j)}(\wxy, \P_{sg}^{\wx}) \, {\bf 1}\{A_s(x,y,\eta)\} \, { g_i(x) \, g_j(x+y_s)} \, \dint y \, \dint x\\
& \quad -\int_{A_i}  \int_{s^{1/d}(A_j-x)} \E \xi_s^{(i)}(\wx, \P_{sg}) \, {\bf 1}\{A_s^{{(1)}}(x,y,{\eta})\} \, \E \xi_s^{(j)}(\wxy, \P_{sg}) \\
& \hskip 4cm  \times g_i(x) \, g_j(x+y_s) \, \dint y \, \dint x\\
& \quad - \int_{A_i}  \int_{s^{1/d}(A_j-x)} \E \xi_s^{(i)}(\wx, \P_{sg}) \, {\bf 1}\{A_s^{{(1)}}(x,y,{\eta})^c\} \, \E \xi_s^{(j)}(\wxy, \P_{sg}) \, {\bf 1}\{A_s^{{(2)}}(x,y,\eta)\}\\
& \hskip 4cm  \times { g_i(x) \, g_j(x+y_s)} \, \dint y \, \dint x\\
 & =:  I_1 - I_2 - I_3.
\end{align*}
Now we define
\begin{align*}
I_1' & :=\int_{A_i}  \int_{s^{1/d}(A_j-x)} \E \xi_s^{(i)}(\wx, \P_{sg(x)}^{\wxy}) \, \xi_s^{(j)}(\wxy, \P_{sg(x)}^{\wx}) \, {\bf 1}\{A_s(x,y,\eta)\}\\
& \hskip 4cm \times {g_i(x) \, g_j(x+y_s)} \, \dint y \, \dint x \allowdisplaybreaks\\
I_2'& :=\int_{A_i}  \int_{s^{1/d}(A_j-x)} \E \xi_s^{(i)}(\wx, \P_{sg(x)}) \, {\bf 1}\{A_s^{{(1)}}(x,y,\eta)\} \, \E \xi_s^{(j)}(\wxy, \P_{sg(x)}) \\
& \hskip 4cm \times {g_i(x) \, g_j(x+y_s)} \, \dint y \, \dint x \allowdisplaybreaks\\
I_3'& :=\int_{A_i}  \int_{s^{1/d}(A_j-x)} \E \xi_s^{(i)}(\wx, \P_{sg(x)}) \, {\bf 1}\{A_s^{{(1)}}(x,y,\eta)^c\} \, \E \xi_s^{(j)}(\wxy, \P_{sg(x)}) \, {\bf 1}\{A_s^{{(2)}}(x,y,\eta)\} \\
& \hskip 4cm \times {g_i(x) \, g_j(x+y_s)} \, \dint y \, \dint x.
\end{align*}
{By applying Lemma \ref{lem:Covdiff}, Lemma \ref{lem:Covdiff2}, and Lemma \ref{lem:Covdiff3} (to the differences of expectations) as well as H\"older's inequality, \eqref{mom}, \eqref{momStationary}, and \eqref{exp1} (to the terms that are not differences of expectations), one sees that there exist constants $C',c'\in(0,\infty)$ such that}
\begin{align*}
& {\max_{k\in\{1,2,3\}} |I_k-I_k'|} \\
& {\leq C' \int_{A_i}\int_{\R^d} (s^{-1/d}+s^{-1/d}\|y\|^{d+1}+\exp(-c's\dist(x,\partial W)^d)) \exp(-c'\|y\|^d) \, \dint y\, \dint x}
\end{align*}
{for $s\geq 1$. A similar computation as in \eqref{eqn:BoundIntegralBoundary} yields now that the right-hand side can be bounded by a constant times $s^{-1/d}$.}

For $U\in\mathcal{B}(\R^d)$ with $U\subseteq A_i$ let
\begin{align*}
I_1''{(U)} & :=\int_{{U}}  \int_{s^{1/d}(A_j-x)} \E \xi_s^{(i)}(\wx, \P_{sg(x)}^{\wxy}) \, \xi_s^{(j)}({\wxy-y_s}, \P_{sg(x)}^{\wx}-y_s) \, {\bf 1}\{A_s(x,y,\eta)\} \\
& \hskip 3.5cm \times {g_i(x) \, g_j(x)} \, \dint y \, \dint x \allowdisplaybreaks\\
I_2''{(U)}& :=\int_{{U}}  \int_{s^{1/d}(A_j-x)} \E \xi_s^{(i)}(\wx, \P_{sg(x)}) \, {\bf 1}\{A_s^{{(1)}}(x,y,\eta)\}  \\
& \hskip 2.75cm \times \E \xi_s^{(j)}({\wxy-y_s}, \P_{sg(x)}-y_s) \, {g_i(x) \, g_j(x)} \, \dint y \, \dint x \allowdisplaybreaks\\
I_3''{(U)}& :=\int_{{U}}  \int_{s^{1/d}(A_j-x)} \E \xi_s^{(i)}(\wx, \P_{sg(x)}) \, {\bf 1}\{A_s^{{(1)}}(x,y,\eta)^c\}  \\
& \hskip 2.0cm \times \E \xi_s^{(j)}({\wxy-y_s}, \P_{sg(x)}-y_s) \, {\bf 1}\{A_s^{{(2)}}(x,y,\eta)\} \, {g_i(x) \, g_j(x)}  \, \dint y \, \dint x.
\end{align*}
Using the Lip\-schitz continuity of {$\xi_s^{(i)}$ and $\xi_s^{(j)}$} with respect to translations (see \eqref{translatebd}) and the Lipschitz continuity of {$g_i$ and $g_j$} and bounding the remaining expectations with H\"older's inequality, \eqref{momStationary}, and \eqref{exp1}, we see that there exist constants $C'',c''\in(0,\infty)$ such that, for $s\geq 1$,
\begin{align*}
{\max_{k\in\{1,2,3\}}|I_k'-I_k''(A_i)|} & {\leq C'' \int_{A_i}\int_{\R^d} \|y_s\| \exp(-c''\|y\|^d) \, \dint y \, \dint x}\\
& {= C''\lambda_d(A_i) \int_{\R^d} \|y\| \exp(-c''\|y\|^d) \, \dint y \ s^{-1/d}.}
\end{align*}
Bounding the integrands again by H\"older's inequality in combination with \eqref{momStationary} and \eqref{exp1}, we see that there exist constants $C''',c'''\in(0,\infty)$ such that
\begin{equation}\label{eqn:BoundDifferenceIntegrals}
\begin{split}
{\max_{k\in\{1,2,3\}}|I_k''(A_i)-I_k''(A_i\cap A_j)|} & {=\max_{k\in \{1,2,3\} }|I_k''(A_i\cap A_j^c)|}\\
& {\leq C''' \int_{A_i\cap A_j^c}\int_{s^{1/d}(A_j-x)}  \exp(-c'''\|y\|^d) \, \dint y \, \dint x}.
\end{split}
\end{equation}
The integral on the right-hand side can be bounded by
\begin{align*}
& \int_{A_i\cap A_j^c} \int_{B^d(\0,s^{1/d}\dist(x,\partial A_j))^c}  \exp(-c'''\|y\|^d) \, \dint y \, \dint x \\
& \leq \int_{\mathbb{R}^d} \exp(-c'''\|y\|^d/2) \, \dint y \int_{A_i\cap A_j^c} \exp(-c''' s \dist(x,\partial A_j)^d/2) \, \dint x. 
\end{align*}
Here the first integral is a constant and a computation similar to that in \eqref{eqn:BoundIntegralBoundary} together with Lemma \ref{LSY}(b) shows that the second integral is bounded by a constant times $s^{-1/d}$.

Using \eqref{xis}, the double integral in \eqref{eqn:sigmaij} can be rewritten as
\begin{align*}
T & := \int_{A_i\cap A_j}\int_{\R^d} \{ \E \xi^{(i)}(\wx, \P_{g(x)}^{\widehat{x+y}}) \, \xi^{(j)}(\widehat{x+y}-y, \P_{g(x)}^{\wx}-y) \\
& \hskip 2cm - \E \xi^{(i)}(\wx, \P_{g(x)}) \, \E \xi^{(j)}(\widehat{x+y}-y, \P_{g(x)}-y) \} \, {g_i(x) \, g_j(x)} \, \dint y \, \dint x \\
& = \int_{A_i{\cap A_j}}\int_{\R^d} \{ \E \xi^{(i)}(\wx, x+s^{1/d}(\P_{sg(x)}^{\wxy}-x)) \, \xi^{(j)}({\wxy-y_s,x+s^{1/d} (\P_{sg(x)}^{\wx}-y_s-x)})\\
& \hskip 1.9cm - \E \xi^{(i)}(\wx, x+s^{1/d}(\P_{sg(x)}-x)) \, \E \xi^{(j)}({\wxy-y_s},x+s^{1/d}(\P_{sg(x)}{-y_s}-x)) \} \\
& \hskip 4cm \times {g_i(x) \, g_j(x)} \, \dint y \, \dint x \allowdisplaybreaks\\
& = \int_{A_i{\cap A_j}}\int_{\R^d} \{ \E \xi_s^{(i)}(\wx,\P_{sg(x)}^{\wxy}) \, \xi_s^{(j)}({\wxy-y_s},\P_{sg(x)}^{\wx}-y_s)\\
& \hskip 2.5cm - \E \xi_s^{(i)}(\wx, \P_{sg(x)}) \, \E \xi_s^{(j)}({\wxy-y_s},\P_{sg(x)}-y_s) \} \, {g_i(x) \, g_j(x)} \, \dint y \, \dint x.
\end{align*}
For $s\geq 1$, $x\in W$, and $y\in\R^d$ we define the events
\begin{align*}
\tilde{A}_s^{(1)}(x,y,\eta) & :=\{R_s(\wx,\P_{sg(x)})\geq \|y_s\|/2\},\\ 
\tilde{A}_s^{(2)}(x,y,\eta) & :=\{R_s(\wxy-y_s,\P_{sg(x)}-y_s)\geq \|y_s\|/2\},
\end{align*}
and
$\tilde{A}_s(x,y,\eta):=\tilde{A}_s^{(1)}(x,y,\eta)\cup \tilde{A}_s^{(2)}(x,y,\eta)$. Note that
\begin{equation}\label{PAtilde}
\mathbb{P}(\tilde{A}_s(x,y,\eta))\leq C_0 \exp(-c_0 \|y\|^d), \quad x,y\in\R^d, \quad s\geq 1,
\end{equation}
with the same constants as in \eqref{exp1}. By the independence of $\tilde{A}_s^{(1)}(x,y,\eta)^c$ and $\tilde{A}_s^{(2)}(x,y,\eta)^c$ and the definition of $R_s$ in \eqref{rstab}, we have that
$$
{\bf 1}\{\tilde{A}_s^{(1)}(x,y,\eta)^c\} \, \xi_s^{(i)}(\wx,\P_{sg(x)}^{\wxy}) \quad \text{ and } \quad {\bf 1}\{\tilde{A}_s^{(2)}(x,y,\eta)^c\} \, \xi_s^{(j)}(\wxy-y_s,\P_{sg(x)}^{\wx}-y_s)
$$
are independent. This implies that
\begin{equation} \label{eqn:IndependenceII}
\begin{split}
& \E \xi_s^{(i)}(\wx,\P_{sg(x)}^{\wxy}) \, \xi_s^{(j)}(\wxy-y_s,\P_{sg(x)}^{\wx}-y_s) \, {\bf 1}\{\tilde{A}_s(x,y,\eta)^c\}\\
& = \E \xi_s^{(i)}(\wx,\P_{sg(x)}^{\wxy}) \, {\bf 1}\{\tilde{A}_s^{(1)}(x,y,\eta)^c\} \, \E \xi_s^{(j)}(\wxy-y_s,\P_{sg(x)}^{\wx}-y_s) \, {\bf 1}\{\tilde{A}_s^{(2)}(x,y,\eta)^c\}.
\end{split}
\end{equation}
For $s\geq 1$, $x\in W$, and $y\in\mathbb{R}^d$ such that $x+y_s\in W$ the independence of 
$$
{\bf 1}\{A_s^{(1)}(x,y,\eta)^c\} \, \xi_s^{(i)}(\wx,\P_{sg(x)}^{\wxy}) \quad \text{ and } \quad {\bf 1}\{A_s^{(2)}(x,y,\eta)^c\} \, \xi_s^{(j)}(\wxy-y_s,\P_{sg(x)}^{\wx}-y_s)
$$
leads to
\begin{equation} \label{eqn:IndependenceI}
\begin{split}
& \E \xi_s^{(i)}(\wx,\P_{sg(x)}^{\wxy}) \, \xi_s^{(j)}(\wxy-y_s,\P_{sg(x)}^{\wx}-y_s) \, {\bf 1}\{A_s(x,y,\eta)^c\}\\
& = \E \xi_s^{(i)}(\wx,\P_{sg(x)}^{\wxy}) \, {\bf 1}\{A_s^{(1)}(x,y,\eta)^c\} \, \E \xi_s^{(j)}(\wxy-y_s,\P_{sg(x)}^{\wx}-y_s) \, {\bf 1}\{A_s^{(2)}(x,y,\eta)^c\}.
\end{split}
\end{equation}
Applying \eqref{eqn:IndependenceI} if $x+y_s\in A_j$ and \eqref{eqn:IndependenceII} if $x+y_s\notin A_j$, we can rewrite $T$ as
$$
T= I_1{''}{(A_i\cap A_j)} - I_2{''}{(A_i\cap A_j)} - I_3{''}{(A_i\cap A_j)} + I_1{'''} - I_2{'''} - I_3{'''} 
$$
with
\begin{align*}
I_1''' & :=\int_{A_i{\cap A_j}}  \int_{\R^d\setminus s^{1/d}(A_j-x)} \E \xi_s^{(i)}(\wx, \P_{sg(x)}^{\wxy}) \, \xi_s^{(j)}({\wxy-y_s}, \P_{sg(x)}^{\wx}-y_s) \, {\bf 1}\{{\tilde{A}_s(x,y,\eta)}\} \\
& \hskip 4cm \times {g_i(x) \, g_j(x)}  \, \dint y \, \dint x \allowdisplaybreaks\\
I_2'''& :=\int_{A_i{\cap A_j}}  \int_{\R^d\setminus s^{1/d}(A_j-x)} \E \xi_s^{(i)}(\wx, \P_{sg(x)}) \, {\bf 1}\{{\tilde{A}_s^{(1)}}(x,y,\eta)\} \\
& \hskip 4cm \times \E \xi_s^{(j)}({\wxy-y_s}, \P_{sg(x)}-y_s) \, {g_i(x) \, g_j(x)}  \, \dint y \, \dint x \allowdisplaybreaks\\
I_3'''& :=\int_{A_i{\cap A_j}}  \int_{\R^d\setminus s^{1/d}(A_j-x)} \E \xi_s^{(i)}(\wx, \P_{sg(x)}) \, {\bf 1}\{{\tilde{A}_s^{(1)}}(x,y,\eta)^c\}  \\
& \hskip 2.5cm \times \E \xi_s^{(j)}({\wxy-y_s}, \P_{sg(x)}-y_s) \, {\bf 1}\{{\tilde{A}_s^{(2)}}(x,y,\eta)\} \, {g_i(x) \, g_j(x)}  \, \dint y \, \dint x.
\end{align*}
By the H\"older inequality, \eqref{momStationary}, and \eqref{PAtilde}, we obtain
\begin{align*}
\max_{k\in\{1,2,3\}}|I_k'''|  & \leq C'''' \int_{A_i\cap A_j}\int_{(s^{1/d}(A_j-x))^c}  \exp(-c''''\|y\|^d) \, \dint y \, \dint x 
\end{align*}
with some constants $C'''',c''''\in(0,\infty)$. The integral on the right-hand side is at most
$$
\int_{A_i\cap A_j}\int_{ B^d(\0,s^{1/d} \dist(x,\partial A_j))^c}  \exp(-c''''\|y\|^d) \, \dint y \, \dint x,
$$
which can be bounded by a constant times $s^{-1/d}$ similarly as explained next to \eqref{eqn:BoundDifferenceIntegrals}.

Because of
\begin{align*}
|J_2 - T| & \leq 3\max_{k\in\{1,2,3\}} |I_k-I_k'| + 3\max_{k\in\{1,2,3\}} |I_k'-I_k''(A_i)| \\
& \quad + 3\max_{k\in\{1,2,3\}} |I_k''(A_i)-I_k''(A_i\cap A_j)| + 3\max_{k\in\{1,2,3\}} |I_k'''|
\end{align*}
combining the estimates above completes the proof of Proposition \ref{covdiff}.
\end{proof}

\noindent{\em Remark.} Note that \eqref{eqn:Limit_sigma_ij} requires weaker assumptions than Proposition \ref{covdiff}. Indeed $g$ is only almost everywhere continuous, the test functions are only bounded, and the sets $W$ and $A_1,\hdots,A_m$ do not have to satisfy \eqref{eqn:AssumptionW} and \eqref{eqn:AssumptionAis}, respectively.

In the following, we sketch how one can deduce \eqref{eqn:Limit_sigma_ij} by combining arguments from the proof of Proposition \ref{covdiff} and the proof of Theorem 2.1 in \cite{Pe07}. We believe that this is more transparent than only referring to \cite{Pe07} since there are some slight differences in the assumptions and the notations differ.

Since we have $\lambda_d(\partial W)=0$ for \eqref{eqn:Limit_sigma_ij}, we can assume without loss of generality that $W$ is open. Under the weaker assumptions of \eqref{eqn:Limit_sigma_ij} we still obtain that the left-hand sides in the Lemmas \ref{lem:Covdiff}, \ref{lem:boundJ1}, \ref{lem:Covdiff2}, and \ref{lem:Covdiff3} vanish for all continuity points $x\in W$ of $g$ as $s\to\infty$ because the probability of the event $U_s(x,y,\eta)$ in the proof of Lemma \ref{lem:Covdiff} goes to zero. This observation yields that, for almost all $x\in W$ and $y\in\R^d$,
\begin{equation}\label{eqn:LimitForJ1}
\lim_{s\to\infty} \E \xi_s^{(i)}(\wx,\P_{sg}) \xi_s^{(j)}(\wx,\P_{sg}) - \E \xi_s^{(i)}(\wx,\P_{sg(x)}) \xi_s^{(j)}(\wx,\P_{sg(x)}) =0 
\end{equation}
and
\begin{equation}\label{eqn:LimitForJ2}
\begin{split}
& \lim_{s\to\infty} \big( \E \xi_s^{(i)}(\wx,\mathcal{P}_{sg}^{\wxy}) \, \xi_s^{(j)}(\wxy,\mathcal{P}_{sg}^{\wx})- \E \xi_s^{(i)}(\wx,\mathcal{P}_{sg}) \, \E\xi_s^{(j)}(\wxy,\mathcal{P}_{sg})\big) \\
& \qquad - \big( \E \xi_s^{(i)}(\wx,\mathcal{P}_{sg(x)}^{\wxy}) \, \xi_s^{(j)}(\wxy,\mathcal{P}_{sg(x)}^{\wx})- \E \xi_s^{(i)}(\wx,\mathcal{P}_{sg(x)}) \, \E\xi_s^{(j)}(\wxy,\mathcal{P}_{sg(x)})\big) = 0.
\end{split}
\end{equation}
To obtain the second limit, one has to insert indicator functions and to use independence as in the proof of Proposition \ref{covdiff} above. This argument also implies that there exist constants $\overline{C},\overline{c}\in (0,\infty)$ such that
\begin{align*}
& \big| \E \xi_s^{(i)}(\wx,\mathcal{P}_{sg}^{\wxy}) \, \xi_s^{(j)}(\wxy,\mathcal{P}_{sg}^{\wx})- \E \xi_s^{(i)}(\wx,\mathcal{P}_{sg}) \, \E\xi_s^{(j)}(\wxy,\mathcal{P}_{sg}) \big| \, g(x+s^{-1/d}y) \\
& \leq \overline{C} \exp(-\overline{c} \|y\|)
\end{align*}
for all $s\geq 1$, $x\in W$, and $y\in\R^d$ with $x+y_s\in W$. This bound is the analog to (4.27) in \cite{Pe07}.

Next we show that $J_1$ and $J_2$ as defined at the beginning of the proof of Proposition \ref{covdiff} converge to the desired terms in \eqref{eqn:sigmaij}. From \eqref{eqn:LimitForJ1} and the dominated convergence theorem this follows immediately for $J_1$. Combining \eqref{eqn:LimitForJ2} with \eqref{translatebd}, we obtain that, for 
 almost all $x\in W$ and $y\in\R^d$,
\begin{align*}
\lim_{s\to\infty} & \big( \E \xi_s^{(i)}(\wx,\mathcal{P}_{sg}^{\wxy}) \, \xi_s^{(j)}(\wxy,\mathcal{P}_{sg}^{\wx})- \E \xi_s^{(i)}(\wx,\mathcal{P}_{sg}) \, \E\xi_s^{(j)}(\wxy,\mathcal{P}_{sg})\big) \\
& -  \big( \E \xi_s^{(i)}(\wx,\mathcal{P}_{sg(x)}^{\wxy}) \, \xi_s^{(j)}(\wxy-y_s,\mathcal{P}_{sg(x)}^{\wx}-y_s) \\
& \quad \quad - \E \xi_s^{(i)}(\wx,\mathcal{P}_{sg(x)}) \, \E\xi_s^{(j)}(\wxy-y_s,\mathcal{P}_{sg(x)}-y_s)\big) = 0,
\end{align*}
which can be rewritten as
\begin{align*}
& \lim_{s\to\infty} g(x+y_s) \big( \E \xi_s^{(i)}(\wx,\mathcal{P}_{sg}^{\wxy}) \, \xi_s^{(j)}(\wxy,\mathcal{P}_{sg}^{\wx})- \E \xi_s^{(i)}(\wx,\mathcal{P}_{sg}) \, \E\xi_s^{(j)}(\wxy,\mathcal{P}_{sg}) \big) \\
& =  g(x) \big( \E \xi^{(i)}(\wx, \mathcal{P}_{g(x)}^{\widehat{x+y}} ) \, \xi^{(j)}(\widehat{x+y}-y,\mathcal{P}_{g(x)}^{\wx}-y) \\
& \qquad \qquad - \E \xi^{(i)}(\wx,\mathcal{P}_{g(x)}) \, \E\xi^{(j)}(\widehat{x+y}-y,\mathcal{P}_{g(x)}-y)\big) .
\end{align*}
This is the counterpart to (4.26) in \cite{Pe07}. Now one can prove with the Lebesgue differentiation theorem as on page 1011 of \cite{Pe07} that $J_2\to T$ as $s\to\infty$, where $T$ is the second term on the right-hand side of \eqref{eqn:sigmaij}.

\section*{Acknowledgements}
The first author  gratefully acknowledges support provided by SNF grants 186049 and 175584.  The second author likewise appreciates
support from  SNF grant 186049, a Simons collaboration grant,  as well as support  from  the University of Bern, where some of this research was completed.





\begin{thebibliography}{}

\bibitem{AR} G. Akinwande and M. Reitzner (2020), Multivariate central limit theorems
for random simplicial complexes, {\em Adv. in Appl. Math.} {\bf 121}, paper no. 102076, 1--27.

\bibitem{BX1} A. D. Barbour and A. Xia (2006), Normal
approximation for random sums, {\em Adv. in Appl. Probab.} {\bf 38}, 693--728.

\bibitem{BY05} Yu. Baryshnikov and J. E. Yukich (2005), Gaussian limits for random measures in geometric probability, {\em Ann. Appl. Probab.} {\bf 15}, 
 213--253.

\bibitem{BH} R. N. Bhattacharya and S. Holmes (2010), An Exposition of G{\"o}tze's Estimation of the Rate of Convergence in the Multivariate Central Limit Theorem, in {\em Normal Approximation and Asymptotic Expansions}, SIAM, Philadelphia, R.N. Bhattacharya and R.R. Rao, Eds. 

\bibitem{BM} O. Bobrowski and S. Mukherjee (2015),   The topology of probability distributions on
manifolds, {\em  Probab. Theory and Related Fields} {\bf 161}
, 651--686.
	
\bibitem{CGS} L. Chen, L. Goldstein, and Q.-M. Shao (2011), {\em Normal Approximation by Stein's Method}, Springer, Berlin.

\bibitem{FangPhD} X. Fang (2011), {\em Multivariate, combinatorial and discretized normal approximations by Stein's method}, PhD thesis, National University of Singapore,\\
{\scriptsize \url{https://scholarbank.nus.edu.sg/handle/10635/32447}}.

\bibitem{Fang} X. Fang (2016), A multivariate CLT for bounded decomposable random vectors with the best known rate, {\em J. Theoretical Probab.} {\bf 29}, 1510--1523.

\bibitem{FangRoellin} X. Fang and A. R\"ollin (2015), Rates of convergence for multivariate normal approximation with applications to dense graphs and doubly indexed permutation statistics, {\em Bernoulli} {\bf 21}, 2157--2189.

\bibitem{GR} L. Goldstein and Y. Rinott (1996), Multivariate normal approximation by Stein's method and size bias couplings, {\em J. Appl. Probab.} {\bf 33}, 1--17.

\bibitem{Go} F. G\"otze (1991), On the rate of convergence in the multivariate CLT, {\em Ann. Probab.} {\bf 19}, 724--739.

\bibitem{Henze} N. Henze (1988), A multivariate two-sample test based on the number of nearest neighbor type coincidences, {\em Ann. Statist.} {\bf 16}, 
 772--783.


\bibitem{HLS} D. Hug, G. Last, and M. Schulte (2016), Second order properties and central limit theorems for geometric functionals of Boolean models,
 {\em Ann. Appl. Probab.} {\bf 26}, 73--135.

\bibitem{LP} R. Lachi\`eze-Rey and G. Peccati (2017), New Berry-Esseen bounds for functionals of binomial point processes, {\em Ann. Appl. Probab.} {\bf 27}, 1992--2031.

\bibitem{LPY} R. Lachi\`eze-Rey, G. Peccati, and X. Yang (2020), Quantitative two-scale stabilization on the Poisson space, arXiv:2010.13362.

\bibitem{LSY} R. Lachi\`eze-Rey, M. Schulte, and J. E. Yukich (2019), Normal approximation for stabilizing functionals, {\em Ann. Appl. Probab.} {\bf 29}, 931--991.

\bibitem{LPS} G. Last, G. Peccati, and M. Schulte (2016), Normal approximations on the Poisson space: Mehler's formula, second order Poincar\'e inequalities and stabilization, {\em Probab. Theory and Related Fields} {\bf 165}, 667--723.

\bibitem{PeccatiZheng} G. Peccati and C. Zheng (2010), Multi-dimensional Gaussian fluctuations on the Poisson space, {\em  Electron. J. Probab.} {\bf 15}, 1487--1527.

\bibitem{Pbook} M. D. Penrose (2003), Random Geometric Graphs, Oxford University Press, New York.

\bibitem{Pe07} M. D. Penrose (2007), Gaussian limits for random geometric measures, {\em Electron. J. Probab.} {\bf 12}, 989--1035.

\bibitem{PW} M. D. Penrose and A. Wade (2008), Multivariate normal approximation in geometric probability,  {\em J. Stat. Theory Pract.} {\bf 2}, 293--326.

\bibitem{PY1} M. D. Penrose and J. E. Yukich (2001), Central limit theorems
for some graphs in computational geometry, {\em Ann. Appl.
Probab.} {\bf 11}, 1005--1041.

\bibitem{PY4} M. D. Penrose and J. E. Yukich (2003), Weak laws of large numbers in
geometric probability, {\em Ann. Appl. Probab.} {\bf 13}, 277--303.

\bibitem{PY5} M. D. Penrose and J. E. Yukich (2005), Normal
approximation in geometric probability, in {\em Stein's Method and
Applications}, Lecture Note Series, Inst. for Math. Sci., National Univ. Singapore {\bf 5}, Singapore University Press, Singapore, A. D. Barbour and L. H. Y. Chen, Eds., 37--58.

\bibitem{PY6} M. D. Penrose and J. E. Yukich (2013), Limit theory for point processes in manifolds, {\em Ann. Appl. Probab.} {\bf 23}, 2161--2211.

\bibitem{ReinertRoellin2009} G. Reinert and A. R\"ollin (2009), Multivariate normal approximation with Stein's method of exchangeable pairs under a general linearity condition, {\em Ann. Probab.} {\bf 37}, 2150--2173.

\bibitem{RST}  M. Reitzner, M. Schulte, and C. Th\"ale (2017), Limit theory for the Gilbert graph, {\em Adv. in Appl. Math.} {\bf 88}, 26--61.

\bibitem{RR} Y. Rinott and V. Rotar (1996), A multivariate CLT for local dependence with $n^{-1/2} \log n$ rate and applications to multivariate graph related statistics, {\em J. Multivariate Analysis} {\bf 56}, 333--350.

\bibitem{Sch} M. Schilling (1986), Multivariate two-sample tests based on nearest neighbors, {\em J. Amer. Stat. Assoc.} {\bf 81}, 
 799--806.

\bibitem{SchreiberSurvey} T. Schreiber (2010), Limit theorems in stochastic geometry, in {\em New Perspectives in Stochastic Geometry}, Oxford University Press, Oxford, W. S. Kendall and I. Molchanov, Eds., 111--144.

\bibitem{SchultePhD}  M. Schulte (2013), {\em Malliavin-Stein Method in Stochastic Geometry}, PhD thesis, University of Osnabr\"uck,\\
{\scriptsize \url{https://repositorium.ub.uni-osnabrueck.de/bitstream/urn:nbn:de:gbv:700-2013031910717/1/thesis_schulte.pdf}}.

\bibitem{SY2} M. Schulte and J. E. Yukich (2019), Multivariate second order Poincar\'e inequalities for Poisson functionals,  {\em  Electron. J. Probab.} {\bf 24}, paper no. 130, 1--42.

\bibitem{Yubook} J. E. Yukich (1998), {\em Probability Theory of Classical Euclidean Optimization Problems},
 Lecture Notes in Mathematics, volume 1675, Springer, Berlin.


\end{thebibliography}
\end{document}